\definecolor{linkred}{rgb}{0.7,0.2,0.2}
\theoremstyle{plain}
\newtheorem{main-theorem}{Main Theorem}
\numberwithin{equation}{section}
\theoremstyle{plain}
\newtheorem{theorem}{Theorem}[section]
\newtheorem{prop}[theorem]{Proposition}
\newtheorem{corollary}[theorem]{Corollary}
\newtheorem{lemma}[theorem]{Lemma}
\theoremstyle{definition}
\newtheorem{definition}[theorem]{Definition}
\newtheorem{construction}[theorem]{Construction}
\newtheorem{example}[theorem]{Example}
\newtheorem{remark}[theorem]{Remark}
\def\ra{\rightarrow}
\newcommand{\gitq}{/\hspace{-0.25pc}/}
\def\co{\colon\thinspace} 
\DeclareMathOperator{\PEff}{\overline{Eff}}
\DeclareMathOperator{\Nef}{Nef}
\DeclareMathOperator{\Pic}{Pic}
\DeclareMathOperator{\spec}{Spec}
\DeclareMathOperator{\Sym}{Sym}
\DeclareMathOperator{\ord}{ord}
\DeclareMathOperator{\Exc}{Exc}
\def\Hn1{\mathcal{H}_{n,1}}
\renewcommand\min{\text{min}}
\def\irr{\text{irr}}
\def\red{\text{red}}
\def\DD{\mathbb{D}}
\def\sl{\mathfrak{sl}}
\def\A{\mathcal{A}}
\def\C{\mathcal{C}}
\def\D{\mathcal{D}}
\def\O{\mathcal{O}}
\def\M{\overline{M}}
\newcommand\Mg[1]{\overline{\mathcal{M}}_{#1}}
\newcommand\tM[1]{\widetilde{M}_{#1}}
\newcommand\modp[2]{\langle #1\rangle_{#2}}
\def\Br{\mathrm{Br}}
\def\N{\mathcal{N}}
\def\L{\mathcal{L}}
\def\X{\mathcal{X}}
\def\MM{\mathcal{M}}
\def\Y{\mathcal{Y}}
\def\Z{\mathcal{Z}}
\def\sigman{\{\sigma_i\}_{i=1}^{n}}
\def\QQ{\mathbb{Q}}
\def\ZZ{\mathbb{Z}}
\def\PP{\mathbb{P}}
\def\ZZ{\mathbb{Z}}
\def\FF{\mathbb{F}}
\def\CC{\mathbb{C}}
\def\DD{\mathbb{D}}
\def\EE{\mathbb{E}}
\def\HH{\mathrm{H}}
\DeclareMathOperator\SL{SL}
\DeclareMathOperator\spn{span}
\def\nb{\nobreakdash}
\begin{document}
\title{Cyclic covering morphisms on $\M_{0,n}$}
\author{Maksym Fedorchuk}
\email{mfedorch@math.columbia.edu}
\address{Department of Mathematics, Columbia University, 2990 Broadway, New York, NY 10027}

\begin{abstract} 
We study cyclic covering morphisms from $\M_{0,n}$ to
moduli spaces of unpointed stable curves of positive genus or compactified 
moduli spaces of principally polarized abelian varieties. 
Our main application is a construction of new semipositive 
vector bundles and nef divisors on $\M_{0,n}$, with a view toward the F-conjecture. 
In particular, we construct new extremal rays of 
$\Nef(\M_{0,n}/S_{n})$. We also find an alternate description of all
$\sl$ level $1$ conformal blocks divisors on $\M_{0,n}$.
\end{abstract}
\maketitle


\section{Introduction}
\label{S:introduction}
The purpose of this paper is to describe a new geometric 
construction of nef line bundles on $\M_{0,n}$, the moduli space of stable $n$-pointed rational curves. 
The construction is motivated by the F-conjecture, posed by Fulton, 
that describes $\Nef(\M_{0,n})$ as the polyhedral
cone, called the {\em F-cone}, cut out by the hyperplanes dual to $1$-dimensional boundary strata in 
$\M_{0,n}$. The F-conjecture was first addressed in \cite{keel-mckernan}
and has attracted a great deal of interest since;
see \cite{GKM, FG, Gib, Larsen, fontanari}. In particular, by \cite{GKM} the $S_n$-symmetric variant
of the F-conjecture implies the analog of the F-conjecture for $\M_{g}$ formulated by Faber 
in \cite{faber}.

Our main tool is the {\em cyclic covering morphism} that associates to a stable $n$\nobreakdash-pointed
rational curve the cyclic degree $p$ cover branched at the marked points ($p$ needs to divide $n$).
The literature on the subject of cyclic covers of a projective line is vast.  
We mention recent papers of Chen \cite{dawei-quadratic}, 
Bouw and M\"oller \cite{bouw-moller}, Eskin, Kontsevich, and Zorich \cite{eskin-kontsevich-zorich}, McMullen \cite{mcmullen}, Moonen, and Oort \cite{moonen, moonen-oort}.
In particular, \cite{eskin-kontsevich-zorich} studies the eigenbundle decomposition of 
the Hodge bundle over a family of cyclic covers of $\PP^1$ with $4$ branch points. 
So far, families of cyclic covers were used to study the geometry of  
the moduli spaces of stable curves, abelian differentials, of principally polarized abelian varieties, etc.
For example, cyclic covers are used to study                
special subvarieties in the moduli space of abelian varieties that arise from families of cyclic covers in 
\cite{moonen, moonen-oort}. 
The novelty of our approach is that we use the cyclic covering construction in a way that bears on 
the cone of nef divisors of $\M_{0,n}$. That this simple construction elucidates the structure 
of the symmetric nef cone of $\M_{0,n}$ comes as a pleasant surprise. 
 
We now present three theorems that give a flavor of our results. Our most general results are contained in 
the main body of this paper. We also refer the reader to Section \ref{S:n=6}
where the illustrative example of $\M_{0,6}$ is worked out. 

Briefly, for every $p\mid n$, we define the cyclic $p$-covering morphism
$f_{n,p}\co \M_{0,n} \ra \M_{g}$, where $g=(n-2)(p-1)/2$, that associates to a stable 
$n$-pointed rational curve its cyclic degree $p$ cover branched over $n$ marked points. 
After passing to a root stack of $\M_{0,n}$, the cyclic $p$-covering morphism lifts to $\Mg{g}$. 
Our first observation concerning cyclic covering morphisms gives a determinantal
description of $\sl_{n}$ level $1$ conformal blocks bundles on $\M_{0,n}$ recently studied
by Fakhruddin in \cite{fakh}, and Arap, Gibney, Stankewicz, and Swinarski in \cite{agss} (see also \cite{ags}); we follow notation of
\cite{agss}. 

\smallskip
\noindent 
\textbf{Theorem A.} (char $0$) {\em Let $\EE$ be the restriction of the Hodge bundle over 
$\Mg{g}$ 
to $f_{n,p}(\M_{0,n})$. There is an eigenbundle decomposition 
$
\EE=\bigoplus_{j=1}^{p-1} \EE_j
$
with respect to a natural $\mu_p$-action on $\EE$. The vector bundle 
$f_{n,p}^*(\EE_j)$ is semipositive on $\M_{0,n}$ and 
$\det (f_{n,p}^*(\EE_j))=\DD^1_{n,jn/p}/p$, where $\DD^1_{n,jn/p}$ 
is the symmetric $\sl_{n}$ level $1$ conformal blocks divisor associated 
to the fundamental weight $w_{jn/p}$. 
}

\smallskip

By a recent work of Arap, Gibney, Stankewicz, and Swinarski \cite{agss}, 
the $\sl_{n}$ level $1$ conformal blocks divisor $\DD^1_{n,j}$
generates an extremal ray of the nef cone of $\M_{0,n}/S_n$ for every fundamental weight $w_j$, $j=1,\dots, n-1$. 
Theorem A, and its generalization Theorem \ref{T:theorem-AA}, 
provides an algebro-geometric
construction of these line bundles and gives an independent 
proof of nefness of $\DD^1_{n,j}$. 
By a recent work of Giansiracusa \cite{gian}, the line bundle $\DD^1_{n,j}$ is a pullback to $\M_{0,n}$ 
of a polarization on a GIT quotient of the compactified
parameter space of $n$ points lying on rational normal curves of degree $j-1$ in $\PP^{j-1}$. We 
observe that Theorem A expresses the Hodge class $c_1(f^*_{n,p}(\EE))$ as an effective
linear combination of the conformal blocks divisors 
$\DD^1_{n,jn/p}$, $j=1,\dots, p-1$. The following result 
gives a geometric interpretation of this Hodge class.
Its relation with the GIT quotients of Giansiracusa is a complete mystery.\footnote{Except in the case of $p=2$ where
$\DD^1_{n,n/2}\sim f^*_{n,2}(\lambda)$, and $p=3$
where $\DD^1_{n,n/3}=\DD^1_{n,2n/3}\sim f^*_{n,3}(\lambda)$; see Proposition \ref{P:new-formula}.}

\smallskip
\noindent 
\textbf{Theorem B.} 
 {\em 
The line bundle $\lambda_{n, p}:=\det(f^*_{n,p}(\EE))$ is semiample on $\M_{0,n}$
and defines a morphism to the Satake compactification of $\A_{(n-2)(p-1)/2}$. It contracts the 
boundary divisors $\Delta_{k}$ with $p\mid k$, and its divisor class is 
\[
f_{n,p}^*(\lambda) =\frac{p}{12}\left(\left(1-\frac{1}{p^2}\right)\psi-\sum_{k} \left(1-\frac{\gcd(k,p)^2}{p^2}\right)\Delta_k\right).
\]
}
\smallskip

Theorem A shows that all extremal rays of $\Nef(\M_{0,n}/S_n)$ generated by 
$\sl_n$ level $1$ conformal blocks bundles $\DD^1_{n,j}$ arise from 
the cyclic covering construction. Theorem \ref{T:theorem-AA} in the sequel shows that {\em every} $\sl$ level $1$ 
conformal blocks divisor on $\M_{0,n}$ arises in a similar fashion via a generalized  
cyclic covering morphism.

The following theorem produces a new extremal ray of $\Nef(\M_{0,n}/S_n)$. It
lies outside of the cone generated by the determinants of conformal blocks bundles
computed so far \cite{swinarski-personal}.

\smallskip
\noindent 
\textbf{Theorem C.} 
 {\em Suppose $3\mid n$. The divisor $f^*_{n,3}(9\lambda-\delta_{\irr})$ is an 
extremal ray of $\Nef(\M_{0,n}/S_n)$.
}

\smallskip Theorem C is proved in Section \ref{S:new}.
We believe that there are analogs of Theorem C that produce new extremal rays of $\Nef(\M_{0,n}/S_n)$
for other values of $p$: We prove one for $p=5$ in Proposition \ref{T:p=5}, producing nef divisors on $\Nef(\M_{0,n}/S_n)$
that were previously unattainable with other techniques. (Almost definitely, these divisors generate extremal
rays of $\Nef(\M_{0,n}/S_n)$, for $5\mid n$.)

Taken together, Theorems A and C 
raise a fascinating possibility that the extremal rays of the symmetric nef cone of $\M_{0,n}$ can
be understood by studying families of cyclic covers. This is the subject of our ongoing investigation. 
 
\subsection{Cyclic covering morphisms on $\M_{0,6}$}
\label{S:n=6}
We proceed to describe an illustrative example of our approach in the case of $\tM{0,6}:=\M_{0,6}/S_6$. The Neron-Severi space of $\tM{0,6}$ is generated by divisor classes $\Delta_{2}$ and $\Delta_{3}$. There are only two F-curves: $F_{2,2,1,1}$ 
and $F_{3,1,1,1}$. 
So the extremal rays of the F-cone are spanned by 
$$D_1=2\Delta_2+\Delta_3 \ \text{  and   } D_2=\Delta_2+3\Delta_3.$$
The divisor $D_{2}$ is easily seen to be semiample: it defines the morphism to $\Sym^6(\PP^1)\gitq \SL(2)$. 
This morphism contracts every F-curve of type $(3,1,1,1)$, 
and more generally contracts $\Delta_{3}$ to the unique point in the GIT quotient corresponding to strictly semistable 
orbits. 

To prove that $D_{1}$ is semiample, we introduce the morphism $f_{2}\co \tM{0,6} \ra \M_{2}$ that 
sends a stable $6$-pointed rational curve to its cyclic cover of degree $2$ totally ramified over $6$ marked points. 
For example, a $\PP^{1}$ marked by points $[x_{i}:1]$ is mapped to the genus $2$ curve 
$$
y^{2}=\prod_{i=1}^{6}(x-x_{i}).
$$
This morphism extends to the boundary because $\M_{2}$ is proper and the indeterminacy of $f_2$ 
at the boundary of $\tM{0,6}$ is a priori at worst finite. 
If we take now a stable $6$-pointed rational curve in the $1$-dimensional boundary stratum of type $(2,2,1,1)$, then the 
corresponding double cover has three rational components and its stabilization is a $2$-nodal rational curve. In 
particular, the normalization of the double cover does not vary in moduli.  
Consider now the Hodge
class $\lambda\in \Pic(\M_{2})$. It is well-known to define the morphism from $\M_2$ 
to the Satake compactification of $\A_{2}$, also known as the quotient of the {\em Igusa quartic} 
by the action of $S_6$.
Pulling back $\lambda$ via $f_{2}$ we obtain a semiample divisor on $\tM{0,6}$ that intersects 
every F-curve of type $(2,2,1,1)$ in degree $0$. It follows that 
$$D_{1}=2\Delta_2+\Delta_3\sim f_{2}^{*}\lambda,$$
where $\sim$ stands for numerical proportionality.

This is the essence of our method. Two aspects of the described construction can now be varied. 
One is the degree of the cyclic covering, the other is the line bundle that we pullback from a moduli space of positive 
genus curves. To illustrate the first point, consider the morphism $f_{3}\co \tM{0,6}\ra \M_{4}$ that now sends a 
stable $6$-pointed rational curve to its cyclic cover of degree $3$ totally ramified over $6$ marked points:
a $\PP^{1}$ marked by points $[x_{i}:1]$ is mapped to the genus $4$ curve 
$$
y^{3}=\prod_{i=1}^{6}(x-x_{i}).
$$
Every curve in the family of $\mu_{3}$-covers over an F-curve of type $(3,1,1,1)$ 
has an elliptic component with $j$-invariant $0$ 
attached at three points to another elliptic component with $j$-invariant $0$. 
Thus, the degree of $\lambda\in \Pic(\M_{4})$ on such a family is $0$. It follows that 
$$D_{2}\sim f_{3}^{*}\lambda.$$
In particular, the GIT quotient 
of $6$ unordered points on $\PP^1$ has another geometric interpretation: It is (the normalization 
of) the image of the cyclic $3$-covering morphism from $\tM{0,6}$ to the Satake compactification 
of principally polarized abelian varieties of dimension $4$. It is also the
quotient of the {\em Segre cubic} by the action of $S_6$. 
By \cite[p.257]{prokhorov}, this quotient is isomorphic to $\PP(2,4,5,6)$.

To illustrate the second point, we return to the morphism $f_{2}\co \tM{0,6} \ra \M_{2}$. It is well-known that the line
bundle $12\lambda-\delta_{\irr}$ is nef on $\M_{2}$ and has degree $0$ on any family whose moving 
components are all of genus $1$. 
Observe now that the moving component of the family of 
$\mu_{2}$-covers over an F-curve of type $(3,1,1,1)$ is of genus $1$. It follows that
$$
D_{2}\sim f_{2}^{*}(12\lambda-\delta_{\irr}).
$$
Finally, we know from Theorem C that the line bundle $9\lambda-\delta_{\irr}$ is nef 
on the locus of $\mu_{3}$-covers in $\M_{4}$ and that 
$$
D_{1}\sim f_{3}^{*}(9\lambda-\delta_{\irr}).
$$

\subsection{Notation and conventions}\label{S:notation}

We work over an algebraically closed field of characteristic $0$, which we denote by $\CC$. 
Some results, such as computations of Section \ref{S:appendix}, extend to sufficiently 
high positive characteristic. 

\smallskip
\noindent
We denote by $\tM{0,n}$ the quotient scheme $\M_{0,n}/S_n$. We have
$\Pic(\tM{0,n})=\ZZ\{\Delta_2,\Delta_3,\dots, \Delta_{\lfloor n/2}\}.$

\smallskip
\noindent
We denote by $\modp{a}{p}$ the representative in $\{0,1,\dots, p-1\}$ of the residue of $a$ modulo $p$. 

\smallskip
\noindent If $\vec{d}=(d_1,\dots, d_n)$ and $I\subset \{1,\dots, n\}$, we set $d(I):=\sum_{i\in I} d_i$.
 
 \smallskip
\noindent
An F-curve of type $(a,b,c,d)$ in $\M_{0,n}$
is a family of stable $n$-pointed rational curves 
obtained from the universal family $(\C; \sigma_1,\sigma_2,\sigma_3,\sigma_4)$ 
over $\M_{0,4}$ by identifying sections $\sigma_1,\sigma_2,\sigma_3,\sigma_4$ with sections of constant stable
$(a+1)$, $(b+1)$, $(c+1)$, and $(d+1)$-pointed rational curves.\footnote{We do not require constant families to be 
maximally degenerate.} For $t\in \PP^1$, we call the $4$-pointed 
component $\C_t$ a {\em backbone}; we also denote $p_A:=\sigma_1(t)$, $p_B:=\sigma_2(t)$, $p_C:=\sigma_3(t)$, 
$p_D:=\sigma_4(t)$. 
The {\em class} of any F-curve in $\tM{0,n}$ corresponding to the decomposition $n=a+b+c+d$ is denoted by $F_{a,b,c,d}$. 
We also call $F_{a,b,c,d}$ an F-curve of type $(a,b,c,d)$. In the non-symmetric case, an F-curve corresponding 
to the partition $\{1,\dots, n\}=I_1\cup I_2\cup I_3 \cup I_4$ is denoted $F_{I_1,I_2,I_3,I_4}$. 
A divisor that intersects every F-curve non-negatively is called {\em F-nef}. The F-nef divisors form
an {\em F-cone} inside $N^1(\M_{0,n})$. Fulton's conjecture posits that the F-cone of $\M_{0,n}$ is equal 
to the nef cone.

\smallskip
\noindent
Given $\QQ$-Cartier divisors $D_1$ and $D_2$ on a projective variety $X$, we say that $D_1\sim D_2$ if $D_1$ and 
$D_2$ generate the same ray in $N^1(X)$, or, equivalently, if $D_1\equiv_{num} cD_2$ for some $c>0$.
\noindent
Finally, we refer to \cite{keel-mckernan} and \cite{HM} for
 the intersection theory on $\tM{0,n}$ and $\M_{0,n}$. 

\section{Cycling covering morphisms}

We introduce a sequence of natural morphisms, the so-called 
{\em cyclic covering morphisms} from $\M_{0,n}$ to moduli spaces of 
{\em unpointed} Deligne-Mumford stable curves of positive genus. The key example is easy to describe:
Given $n$ points $p_{1},\dots, p_{n}$ on $\PP^{1}$, 
for every $p\mid n$ there is a $\mu_p$-cover of 
$\PP^{1}$ totally ramified over points $p_{i}$. 
If $p_{i}=[x_{i}:1]$, this cover is the regular model of the function field extension of $\CC(x)$
defined by $y^{p}=(x-x_{1})\cdots(x-x_{n})$.
The resulting smooth curve has genus $g=(n-2)(p-1)/2$ by the Riemann-Hurwitz formula.

\begin{definition}[Cyclic $p$-covering morphism]\label{D:cyclic-smooth}
Let $p \mid n$. We define a regular morphism 
\begin{align*}
f_{n,p}\co \overline{M}_{0,n} \ra \overline{M}_{g}, \quad g=(n-2)(p-1)/2,
\end{align*}
to be a unique morphism that sends $(\PP^1; p_1,\dots, p_n)\in M_{0,n}$
to its degree $p$ cyclic cover totally ramified over $\sum_{i=1}^n p_i$.
\end{definition} 
There are two related ways to see that $f_{n,p}$ indeed extends to a morphism $\M_{0,n} \ra \M_g$:
One way is to use the theory of admissible covers of Harris and Mumford \cite{harris-mumford}.
By \cite[p.186]{HM}, there exists a coarse moduli scheme $H$ parameterizing pseudo-admissible $\mu_p$-covers 
(i.e., branched covers where each branching profile at a smooth point is a $p$-cycle in $S_p$). By construction,
$H$ maps finitely to $\M_{0,n}$ and admits a morphism 
to $\M_{g}$. Taking the closure of the section $M_{0,n}\ra H$ given by the cyclic covering trick, and 
using the normality of $\M_{0,n}$, we obtain the necessary extension.
Note that one either has to work with coarse moduli schemes or to take an appropriate root stack of $\overline{M}_{0,n}$ to lift the morphism $f_{n,p}$ to $\Mg{g}$.

We now sketch a related direct construction.
\subsection{The universal $\mu_p$-cover over $\M_{0,n}$}\label{S:universal-p-cover}
As above, consider $p\mid n$. Let $\C\ra \M_{0,n}$
be the universal family of stable $n$-pointed curves and let $\sigman$ be the universal sections.
Observe that $\O_\C(\sum_{i=1}^n\sigma_n)$ is not divisible by $p$ in $\Pic(\C)$: One obstruction to divisibility 
comes from any curve in $\Delta_k$ with $p\nmid k$; another obstruction is due to the fact that 
$\psi_i\in \Pic(\M_{0,n})$ is not divisible by $p$. For any given $1$-parameter family $B\subset \M_{0,n}$, 
this difficulty can be overcome by making a finite base change
of an appropriate order and blowing up the total family at the offending nodes. 
Instead of specifying the base change for every $B$, we indicate a global construction.
\begin{definition} 
\label{D:orbicurve}
Let $p\mid n$. 
We say that $(\X; p_1,\dots, p_n)$ is a {\em $p$-divisible $n$-pointed orbicurve} if
\begin{enumerate}
\item $\X$ is an orbicurve whose coarse moduli space $X$, marked by $p_1,\dots, p_n$, 
is a stable $n$-pointed rational curve.
\item \'{E}tale locally at a node of $X$ of type $\Delta_k$ 
the orbicurve $\X$ is isomorphic to 
\[
\left[\spec \CC[x,y]/(xy) / \mu_r\right],
\]
where $r=p/\gcd(k,p)$ and $\mu_r$ acts by $(x,y)\mapsto (\alpha^{r-k}, \alpha^k y)$, 
with $\alpha$ a primitive $r^{th}$ root of unity. 
\item \'{E}tale locally over $p_1\in X$, the orbicurve $\X$ is isomorphic to $\left[ \spec \CC[x]/ \mu_p\right]$, where
$\mu_p$ acts by $x\mapsto \beta x$, with $\beta$ a primitive $p^{th}$ root of unity. 
We also require the existence of a section $\sigma_1\co \spec \CC \ra \left[ \spec \CC[x]/ \mu_p\right]$.
\end{enumerate}
\end{definition} 
The notion of a $p$-divisible orbicurve is a mild generalization of 
the notion of even rational orbicurves considered in \cite{AD}. With Definition \ref{D:orbicurve}, for every family 
$(\X\ra B; \sigma_1,\dots, \sigma_n)$ of $p$-divisible orbicurves, 
there is a unique line bundle $\L$ satisfying $\L^{p}\simeq \O_\X(\sum_{i=1}^n \sigma_i)$ and $\sigma_1^*\L\simeq \O_B$.
(In other words, $\O_\X(\sum_{i=1}^n \sigma_i)$ has a unique, up to pullbacks from the base, $p^{th}$ root.)
Consider now the Deligne-Mumford stack $\mathcal{M}$ of $p$-divisible $n$-pointed orbicurves with its 
universal family $\Y\ra \mathcal{M}$. 
We can construct a $\mu_p$-cover $\Z\ra \Y$ simply by applying the cyclic covering construction (see \cite[Proposition 4.1.6]{Laz1}) to the 
divisor $\sum_{i=1}^n \sigma_i$. We omit the details of the construction and refer to \cite{AD} for the special
case of $p=2$. 

Since $\mathcal{M} \ra \M_{0,n}$ is bijective on geometric points, we will 
not distinguish $\Pic(\mathcal{M})\otimes \QQ$ and $\Pic(\M_{0,n})\otimes \QQ$. 
We will informally call $\Z\ra \mathcal{M}$, 
the {\em universal stable $\mu_p$-cover} over $\M_{0,n}$, and the fibers of $\Z\ra \mathcal{M}$ will be called
{\em stable $\mu_p$-covers}. The universal stable $\mu_p$-cover induces a morphism $\mathcal{M}\ra\Mg{g}$
that descends to give the desired morphism $f_{n,p}\co \M_{0,n}\ra \M_g$.


\subsection{Divisor classes on $\M_{0,n}$ via cyclic covering morphisms}
\label{S:divisor-classes-lambda}
By pulling back nef divisors on $\Mg{g}$, we obtain 
nef divisors on $\M_{0,n}$. The most interesting, from our point of 
view, divisors are obtained by pulling back: 
the Hodge class $\lambda$, which itself is well-known to be semiample (it defines a morphism from $\M_{g}$ to the 
Satake compactification of $\A_g$), the determinants $\lambda(j)$ of the eigenbundles of the Hodge bundle (these are 
discussed in the sequel), and linear combinations of $\lambda(j)$ and the boundary divisor $\delta_{\irr}$.

\begin{definition}[The Hodge class]\label{D:level1}
Given $p\mid n$, we define 
$\lambda_{n, p}:=f_{n,p}^* \lambda \in \Pic(\M_{0,n})\otimes \QQ$.
\end{definition} 
As we have already observed, 
the divisor class $\lambda_{n, p}$ is semiample and has a simple geometric interpretation: It 
defines a morphism from $\M_{0,n}$ to the Satake compactification of $\A_g$ by sending a rational $n$\nb-pointed 
curve to the abelian part of the generalized Jacobian of its $\mu_{p}$-cover.  Such a geometric interpretation of 
$\lambda_{n, p}$ lents itself to a description of $\lambda_{n, p}^{\perp}\subset N_1(\M_{0,n})$. Namely,
we have the following observation.
\begin{prop}
A curve $B\subset \M_{0,n}$ satisfies $\lambda_{n, p}\cdot B=0$ if and only if every moving component of 
the family of $\mu_p$-covers over $B$ is rational.
\end{prop}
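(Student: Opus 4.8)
The plan is to unwind the statement $\lambda_{n,p}\cdot B=0$ through the geometric description of $\lambda_{n,p}=f_{n,p}^*\lambda$ already recorded: pulling back a curve $B\subset\M_{0,n}$ along the cyclic covering morphism produces a family of stable curves $\Z_B\ra B$ over (a suitable cover of) $B$, and $\lambda_{n,p}\cdot B$ is the degree of the Hodge bundle of this family. The crucial input is the standard fact that for a family of stable curves $\pi\co\mathcal{C}\ra B$ over a smooth complete curve, $\deg\lambda=0$ if and only if the family is isotrivial modulo contracting the rational subcurves — more precisely, the Hodge bundle has degree zero exactly when the abelian part of the Jacobian of each fiber is constant, i.e. every component of every fiber that varies in moduli is rational. (One reference is the semipositivity of $\lambda$ together with the characterization of when $\deg\lambda=0$; alternatively one invokes that $\lambda$ defines the morphism to the Satake compactification of $\A_g$, so $\lambda\cdot B=0$ iff $B$ maps to a point in the Satake compactification, iff the Jacobian's abelian part is constant along $B$.)

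First I would set up the family: after a finite base change $B'\ra B$ (which changes nothing about whether an intersection number is zero), the orbicurve construction of Section~\ref{S:universal-p-cover} gives a genuine family of $p$-divisible orbicurves over $B'$, hence a family of stable $\mu_p$-covers $\Z_{B'}\ra B'$, and by definition $\lambda_{n,p}\cdot B$ is a positive multiple of $\deg_{B'}\lambda(\Z_{B'}/B')$. Second, I would invoke semipositivity of $\lambda$ on $\Mg{g}$ (Fujita, Kawamata) to reduce ``$\lambda_{n,p}\cdot B=0$'' to ``the moduli map $B'\ra\Mg{g}$ contracts $B'$ to a point of the Satake compactification'', which is the condition that the abelian part of $\mathrm{Jac}(\Z_{b})$ is locally constant as $b$ varies. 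Third, I would translate this condition on the family of covers back to a condition on $B$ itself: a component of a stable $\mu_p$-cover $\Z_b$ is moving (varies in moduli) precisely when it lies over a moving component of the corresponding stable $n$-pointed rational curve, so the abelian part of $\mathrm{Jac}(\Z_b)$ is constant iff every moving component of $\Z_b$ is rational, iff every moving component of the family of $\mu_p$-covers over $B$ is rational. The two directions then follow: if all moving components of the covers are rational the abelian part of the Jacobian never changes, so $\deg\lambda=0$; conversely if some moving component has positive genus, that component contributes a nontrivial abelian variety varying in a nonisotrivial family, forcing $\deg\lambda>0$.

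The step I expect to be the main obstacle is the third one: carefully relating ``moving component of the family of $\mu_p$-covers over $B$'' to the genus of the pieces of $\mathrm{Jac}(\Z_b)$ that actually vary. One must handle the fact that the cyclic cover of a varying rational component can still be a family of positive-genus curves (a $\mu_p$-cover of $\PP^1$ branched at moving points is not isotrivial), so ``moving component is rational'' must be interpreted downstairs on $\M_{0,n}$ — and then one checks that such a family of covers of a varying rational curve nonetheless contributes zero to $\deg\lambda$ only after passing to the stabilization, because the cover of a $4$-pointed $\PP^1$ need \emph{not} be rational. The resolution is exactly the phenomenon already seen in the $\M_{0,6}$ examples of Section~\ref{S:n=6}: what matters is whether the \emph{stabilized} moving component is rational, equivalently whether the corresponding F-curve direction in $\M_{0,n}$ is contracted by $\lambda_{n,p}$; I would make this precise by a local analysis of the cyclic cover over each type of varying component and a bookkeeping of which summands of the Hodge bundle are pulled back from the base. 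Once that dictionary is in place, the equivalence is immediate.
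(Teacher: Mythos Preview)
Your steps 1 and 2 are exactly the paper's argument: the paper's proof is the single sentence ``Clear from the definition of the Satake compactification,'' and what you wrote unpacks that correctly. Since $\lambda_{n,p}=\tau_{n,p}^*\mathcal{O}(1)$ for the map to $\A_g^S$, and the extended Torelli map sends a stable curve to the product of the Jacobians of its normalized components, $\lambda_{n,p}\cdot B=0$ if and only if those Jacobians are constant along $B$, if and only if every component of the $\mu_p$-cover that varies in moduli is rational.

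Your step 3 and the ``obstacle'' paragraph, however, stem from a misreading of the statement. The phrase ``moving component of the family of $\mu_p$-covers'' refers to components of the covers $\Z_b$ upstairs, not to components of the underlying pointed rational curves. There is no need to translate anything back to a condition on $B$ in $\M_{0,n}$; the proposition is already phrased in terms of the covers. In particular, your worry that ``the cyclic cover of a varying rational component can still be a family of positive-genus curves'' is not an obstacle at all: in that situation the moving component of the $\mu_p$-cover \emph{is} of positive genus, and the proposition asserts (correctly) that $\lambda_{n,p}\cdot B>0$. The examples in Section~\ref{S:n=6} illustrate the \emph{other} case, where the cover of the backbone happens to be rational or to have constant moduli, and there $\lambda_{n,p}\cdot B=0$. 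So your proposed reinterpretation ``downstairs'' is both unnecessary and incorrect; once you drop step 3 and the obstacle paragraph, what remains is a correct proof identical in substance to the paper's.
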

\begin{proof}
Clear from the definition of the Satake compactification.
\end{proof}
Next, we recall a construction of covering families for the boundary divisors $\Delta_k\subset \M_{0,n}$.
\begin{construction}
\label{S:test-curves}
For every $k\in \{3,\dots, \lfloor n/2\rfloor \}$, we consider the 
family of $n$-pointed stable rational 
curves obtained by attaching a constant family of $(n-k+1)$-pointed $\PP^1$ 
along one of the sections to the diagonal of  
$\PP^1\times\PP^1\ra \PP^1$, where $\PP^1\times\PP^1\ra \PP^1$ has $k$ horizontal sections. 
Let $T_{k}\subset\M_{0,n}$ be the
stabilization of this family. We then have 
\begin{align}
\Delta_i\cdot T_k=
\begin{cases} 
2-k & \text{if $i=k$}, \\ 
k    &  \text{if $i=k-1$}, \\
0    & \text{otherwise}. 
\end{cases}
\end{align}
\end{construction}

\begin{corollary}\label{C:degree-0}
\begin{enumerate}
\item[]
\item If $p$ divides one of $a, b, c, d$, then $\lambda_{n, p}$ has degree $0$ on an F-curve $F_{a,b,c,d}$.
\item If $p$ divides $k$, then $\lambda_{n, p}$ has degree $0$ on the curve $T_{k}$ of  
Construction \ref{S:test-curves}.
\end{enumerate}
\end{corollary}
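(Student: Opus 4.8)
The plan is to compute the degree of $\lambda_{n,p}$ on each of the two test families by identifying the stable $\mu_p$-cover over the moving component and checking that the induced family of curves is "rational" in the sense of the Proposition (i.e., every moving component of the $\mu_p$-cover is a rational curve, so the abelian part of the generalized Jacobian stays constant). By the Proposition immediately preceding the Corollary, $\lambda_{n,p}\cdot B = 0$ precisely when this holds, so the entire argument reduces to an explicit description of the $\mu_p$-cover over a varying $4$-pointed backbone.

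For part (1): on an F-curve $F_{a,b,c,d}$ the moving component is the backbone $\C_t \cong \PP^1$ carrying the four special points $p_A,p_B,p_C,p_D$, which are attached to constant $(a+1),(b+1),(c+1),(d+1)$-pointed tails. First I would note that the branch divisor of the $\mu_p$-cover, restricted to the backbone, collects with multiplicity: the node $p_A$ receives the $a$ marked points from the $a$-tail, so the local branching data at $p_A$ is governed by $\langle a\rangle_p$, and similarly at $p_B,p_C,p_D$ with $\langle b\rangle_p,\langle c\rangle_p,\langle d\rangle_p$. Since $a+b+c+d=n$ is divisible by $p$, the four residues $\langle a\rangle_p,\dots,\langle d\rangle_p$ sum to $0$ modulo $p$, so this is a well-defined $\mu_p$-cover of $\PP^1$ branched at (at most) four points. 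If $p\mid a$, then $\langle a\rangle_p = 0$, so the cover is branched over at most the three points $p_B,p_C,p_D$; a connected $\mu_p$-cover of $\PP^1$ branched over at most three points has genus $0$ by Riemann–Hurwitz (each connected component is a cyclic cover branched at $\le 3$ points, hence rational, as $2g-2 = p(-2) + \sum(p - p/\gcd) \le 0$ forces $g=0$ when there are at most three ramification points each contributing at most $p-1$). Hence the moving part of the $\mu_p$-cover over $F_{a,b,c,d}$ consists of rational components only, and the Proposition gives $\lambda_{n,p}\cdot F_{a,b,c,d}=0$.

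For part (2): on the curve $T_k$ of Construction \ref{S:test-curves}, the moving component is the diagonal $\PP^1 \subset \PP^1\times\PP^1$, which carries $k-1$ of the horizontal sections as free marked points plus one node (the attaching point to the constant $(n-k+1)$-pointed tail). Wait — I should recount using the intersection numbers $\Delta_{k}\cdot T_k = 2-k$ and $\Delta_{k-1}\cdot T_k = k$: the generic fiber has the backbone meeting a $k$-pointed tail, and the node contributes the cluster of size $k$, with the complementary $n-k$ points distributed on the moving $\PP^1$ (which over the special fibers of $T_k$ degenerate into the $(k-1,1)$ configuration). What matters is that over the moving $\PP^1$ the $\mu_p$-cover is branched at the node-point, whose local data is $\langle k\rangle_p$, together with the remaining honest marked points on the moving component. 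Since $p\mid k$, we have $\langle k\rangle_p = 0$, so the node imposes no ramification, and the cover of the moving $\PP^1$ is branched only at points that do not vary in moduli. Thus the covering curve over the moving component is a fixed (up to isomorphism) cyclic cover — it does not move in $\Mg{g}$ — so $\lambda_{n,p}\cdot T_k = 0$.

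The main obstacle is making precise the claim that at a node of type $\Delta_k$ in the stable $\mu_p$-cover the local branching behavior "as seen from the backbone" is exactly $\langle k\rangle_p$, and that the component of the cover over the moving backbone is a connected cyclic cover branched at the indicated points with the indicated local monodromy; this requires unwinding Definition \ref{D:orbicurve}(2) (the orbifold structure at the node is $\mu_r$ with $r = p/\gcd(k,p)$ acting by weights $(-k,k)$) and the cyclic covering construction of Section \ref{S:universal-p-cover}, to confirm that the line bundle $\L$ with $\L^p = \O_\X(\sum\sigma_i)$ restricts on the backbone to a line bundle whose associated cyclic cover has the asserted ramification profile. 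Once this local bookkeeping is set up — essentially a residue/weight computation modulo $p$ at each of the four (resp. the one) special points — both degree computations follow immediately from the Proposition, with no further intersection theory needed.
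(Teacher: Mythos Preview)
Your overall strategy matches the paper's: identify the moving component of the $\mu_p$-cover and argue that the family of covers is constant, then invoke the Proposition. But there is a genuine mathematical error in part (1).

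You claim that a connected $\mu_p$-cover of $\PP^1$ branched over at most three points has genus $0$, and you justify this by a Riemann--Hurwitz estimate. This is false. For a cyclic degree-$p$ cover branched at three points with weights $a,b,c$, Riemann--Hurwitz gives
\[
2g-2 = -2p + \sum_{i}\bigl(p-\gcd(d_i,p)\bigr) = p - \gcd(a,p)-\gcd(b,p)-\gcd(c,p),
\]
so for instance $y^3=x(x-1)$ has $g=1$ (indeed the paper itself, in Section~\ref{S:n=6}, uses that the cover of the backbone over $F_{3,1,1,1}$ is an elliptic curve with $j=0$). Your inequality ``$\sum(p-p/\gcd)\le 0$'' is not the right ramification count and the conclusion does not follow.

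The correct point, and the one the paper actually uses, is not that the cover of the backbone is \emph{rational} but that it is \emph{rigid}: if $p\mid a$ then the branching over $p_A$ is trivial, so the cover of the backbone is a cyclic cover of $\PP^1$ branched at the three points $p_B,p_C,p_D$; since any three points on $\PP^1$ are $\mathrm{PGL}_2$-equivalent, this cover is the same abstract curve for every value of the cross-ratio. Hence the family of covers is isotrivial and $\lambda$ has degree $0$. In the language of the Proposition, there simply is no moving component of positive genus---not because the component has genus zero, but because it does not move.

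Your part (2) reaches the right conclusion but the description of $T_k$ is muddled. The clean way to say it (and what the paper does) is: the generic fiber of $T_k$ has two components, one with $k$ marked points and one with $n-k$; when $p\mid k$ the ramification over the gluing section is trivial, so the cover of each side is branched only at the marked points of that side, which are constant in the family. Thus the normalization of the cover is a constant family, and $\lambda\cdot T_k=0$.
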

\begin{proof}
The only potential moving component of the $\mu_p$-cover over $F_{a, b, c, d}$ 
is a cyclic cover of the backbone $\PP^1$ branched exactly over 
points $p_A, p_B, p_C, p_D$ (see Section \ref{S:notation} for notation).
Say $p$ divides $a$, then the ramification profile over $p_A$ is trivial. 
It follows that the cover of the backbone does not vary in moduli.

The family over $T_k$ is obtained by gluing two constant families along a section. Since the ramification 
profile over the gluing section is trivial, the resulting cover does not vary in moduli.
\end{proof}

\begin{definition}
\label{D:torelli} 
Let $p\mid n$. Set $g=(n-2)(p-1)/2$. We define
 $\tau_{n,p}\co \M_{0,n} \ra \A^S_g$ to be the composition of the cyclic 
covering morphism $f_{n,p}\co \M_{0,n}\ra \M_g$ and the extended Torelli morphism 
$\tau\co \M_g \ra \A_g^S$ from $\M_g$ to the Satake compactification of $\A_g$. 
\end{definition}
We are now ready to prove the main result of this section (Theorem B from the introduction).
\begin{theorem}\label{T:main-theorem-lambda}
The line bundle $\lambda_{n, p}$ is semiample on $\M_{0,n}$
and defines the morphism 
$$\tau_{n,p}\co \M_{0,n} \ra \A^S_{(n-2)(p-1)/2}$$
to the Satake compactification of $\A_{(n-2)(p-1)/2}$. It contracts the 
boundary divisors $\Delta_{k}$ with $p\mid k$, and its class is 
$$\lambda_{n, p}=\frac{p}{12}\biggl(\left(1-\frac{1}{p^2}\right)\psi-\sum_{k} \left(1-\frac{\gcd(k,p)^2}{p^2}\right)\Delta_k\biggr).
$$
\end{theorem}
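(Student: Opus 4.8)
The plan is to prove the three assertions separately: semiampleness (and the description of the morphism), the contraction of $\Delta_k$ with $p \mid k$, and the explicit divisor class formula. Semiampleness is essentially free: $\lambda$ is semiample on $\Mg{g}$ (it defines the Torelli map to the Satake compactification $\A_g^S$), and the pullback of a semiample bundle under the regular morphism $f_{n,p}$ is semiample. The image of $\tau_{n,p} = \tau \circ f_{n,p}$ is the normalization of the closure of the locus of abelian parts of generalized Jacobians of $\mu_p$-covers, so the morphism statement follows formally once we know $\lambda_{n,p} = f_{n,p}^*\lambda$ defines $\tau_{n,p}$ in the sense of the (suppressed) Remark on what "defines a morphism" means. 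The contraction of $\Delta_k$ for $p \mid k$ is handled by Corollary \ref{C:degree-0}(2): the curves $T_k$ (together with the F-curves inside $\Delta_k$) sweep out $\Delta_k$ and are all contracted, so $\lambda_{n,p}$ restricts to a numerically trivial — hence, being a pullback of a semiample class, trivial — class on each such boundary divisor.

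The real content is the divisor class formula, and this is where I would spend the effort. The strategy is to compute $f_{n,p}^*\lambda$ by Grothendieck–Riemann–Roch applied to the universal stable $\mu_p$-cover $\pi\co \Z \ra \mathcal{M}$ constructed in Section \ref{S:universal-p-cover}. Since $\lambda = c_1(\pi_*\omega_{\Z/\mathcal{M}})$ downstairs, GRR gives $\lambda_{n,p} = \tfrac12 \pi_*\!\left(c_1(\omega_{\Z/\mathcal{M}})^2 + [\text{nodal locus}]\right)$ up to the standard Mumford-type corrections, i.e. the usual $\kappa$ plus $\delta$ expression. The $\mu_p$-action lets us do this equivariantly: the relative dualizing sheaf of the cover decomposes into eigen-line-bundles, and away from the ramification the cover is étale of degree $p$, so one can push computations back down to $\mathcal{M}$ (equivalently $\M_{0,n}$). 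Concretely, I would write $\omega_{\Z/\mathcal{M}}$ in terms of $\phi^*\omega_{\Y/\mathcal{M}}$ (where $\phi\co \Z \ra \Y$ is the degree-$p$ cover) twisted by the ramification divisor over the sections $\sigma_i$, use the projection formula $\phi_*\O_\Z = \bigoplus_{j=0}^{p-1}\L^{-j}$ with $\L^p = \O_\Y(\sum \sigma_i)$, and then track contributions: the $\psi$-coefficient comes from the ramification over the $n$ marked points (each contributing a Riemann–Hurwitz term weighted by the eigenvalue sum $\sum_{j=1}^{p-1} j(p-j)/p^2$ type quantity, which collapses to $(1-1/p^2)$), and the $\Delta_k$-coefficient comes from the local structure of $\Z$ over a node of type $\Delta_k$, which by Definition \ref{D:orbicurve}(2) is governed by $r = p/\gcd(k,p)$ and produces exactly the $(1 - \gcd(k,p)^2/p^2)$ factor.

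Alternatively, and perhaps more cleanly, I would use the eigenbundle decomposition $\EE = \bigoplus_{j=1}^{p-1}\EE_j$ of Theorem A together with the known formula for the orbifold Euler characteristic / degree of each eigen-Hodge-bundle on a family of cyclic covers (this is classical, going back to Hurwitz and appearing in the work of Eskin–Kontsevich–Zorich and Bouw–Möller cited in the introduction): the degree of $\EE_j$ on a one-parameter family is a linear expression in $\psi$ and the $\Delta_k$ with coefficients determined by $\modp{jk}{p}$. Summing $\det \EE_j = \DD^1_{n,jn/p}/p$ over $j$ and invoking the explicit conformal-blocks divisor formulas of \cite{fakh, agss} would give the result; one then checks the two expressions agree, or simply derives the stated closed form directly from $\sum_{j=1}^{p-1}\modp{jk}{p}(p - \modp{jk}{p})$, which evaluates to $\tfrac{p}{12}\bigl(p^2 - \gcd(k,p)^2\bigr)$ by a standard Dedekind-sum-free elementary identity (and to $\tfrac{p}{12}(p^2-1)$ when $\gcd(k,p)=1$, giving the $\psi$ term since $\psi = \sum \psi_i$ and each marked point behaves as a "branch point" with $\gcd = 1$).

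The main obstacle will be bookkeeping the boundary contributions correctly — getting the precise local contribution of $\Z \ra \mathcal{M}$ at a node of type $\Delta_k$ right, including the effect of the orbifold structure (the $\mu_r$-quotient in Definition \ref{D:orbicurve}(2)) on the relative dualizing sheaf and on the $\delta$-term in GRR. I expect the eigenvalue sum $\sum_{j=1}^{p-1} \modp{jk}{p}\bigl(1 - \modp{jk}{p}/p\bigr)$ and its evaluation to $\tfrac{p}{12}\bigl(1 - \gcd(k,p)^2/p^2\bigr)\cdot p$ to be the computational heart; once that elementary identity is in hand, both the GRR route and the conformal-blocks route close up quickly, and consistency with the $p=2$ case ($\lambda_{n,2} = \tfrac18\psi - \sum_k \tfrac{\gcd(k,2)^2 \text{ correction}}{}\Delta_k$, matching the classical genus-2 hyperelliptic formula) serves as a sanity check.
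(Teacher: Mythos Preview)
Your treatment of semiampleness and of the contraction of $\Delta_k$ for $p\mid k$ matches the paper exactly: both are reduced to the fact that $\lambda$ is semiample on $\M_g$ and to Corollary~\ref{C:degree-0}(2) via the covering curves $T_k$.

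For the divisor class formula, however, your two proposed routes are both genuinely different from what the paper does. The paper (in Proposition~\ref{P:pullback}) works on explicit one-parameter test families: it takes a ruled surface over $B\simeq\PP^1$ with $n$ sections, constructs the cyclic cover by hand (cyclic covering trick, base change of degree $p$, weighted blow-ups at the bad fibers), computes $\omega^2_{\Z/B'}$ via adjunction on each exceptional $G_{ij}$, computes $\delta_{\irr}$ and $\delta_{\red}$ from the local $A_{q-1}$ singularities, and then plugs into Mumford's $12\lambda=\kappa+\delta$. No eigenbundle decomposition enters, and no elementary identity on $\sum_j \modp{jk}{p}(p-\modp{jk}{p})$ is needed. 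Your first route (global GRR with the eigen-decomposition of $\phi_*\O_\Z$) is the same formula $12\lambda=\kappa+\delta$ in disguise, but organized character-by-character rather than geometrically; it would work, though the orbifold bookkeeping you flag is exactly where the paper's explicit blow-up computation earns its keep. Your second route (sum the eigenbundle determinants $\lambda_{n,p}(j)$ using Proposition~\ref{P:new-formula} or the conformal-blocks formulas, then evaluate $\sum_{j=1}^{p-1}\modp{jk}{p}(p-\modp{jk}{p})=p(p^2-\gcd(k,p)^2)/6$) is slicker and correct, but in the paper's logical order it would be circular: Proposition~\ref{P:new-formula} and Theorem~A come \emph{after} Theorem~\ref{T:main-theorem-lambda}, and the identification with conformal blocks relies on the F-curve computations of Section~\ref{S:appendix}. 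If you reorder, or import the conformal-blocks classes from \cite{fakh} as a black box, this route closes; but the paper's test-family computation is self-contained and also yields $f_{n,p}^*\delta_{\irr}$ and $f_{n,p}^*\delta_{\red}$ as a byproduct, which your approach does not.
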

\begin{proof}
Set $g=(n-2)(p-1)/2$ and consider the composition 
$\tau_{n,p}\co \M_{0,n} \ra \M_g \ra \A_g^S$. On $\M_g$, we have that
$\lambda=\tau^*(\O_{\A_g^S}(1))$. It follows that $\lambda_{n,p}=f_{n,p}^*(\lambda)$ is semiample.
The class of $\lambda_{n,p}$ is computed in Proposition \ref{P:pullback} below. Finally, 
for every $k$ divisible by $p$, we know from Corollary \ref{C:degree-0} that $\lambda_{n,p}$ has 
degree $0$ on the curve class $T_k$ described in Construction \ref{S:test-curves}. Since deformations of 
$T_k$ cover $\Delta_{k}$, we conclude that the morphism $\tau_{n,p}$ contracts $\Delta_{k}$.
\end{proof}
\begin{remark} When $p$ is prime, we obtain an aesthetically pleasing formula
$$\lambda_{n, p}=\frac{p^2-1}{12p}\biggl(\psi-\Delta+\sum_{p\mid k} \Delta_k\biggr).$$
It is an amusing exercise to verify that $\lambda_{n,p}$ is F-nef. 
\end{remark}

\subsection{A remark on the birational geometry of $\tM{0,n}$}

The effective cone of $\tM{0,n}=\overline{M}_{0,n}/S_n$ has been 
studied already in \cite{keel-mckernan}. Keel and McKernan prove that $\PEff(\tM{0,n})$
is simplicial and is generated by the boundary divisors $\Delta_i$. 
Further, every movable divisor is big. Intuitively, this is explained
by the absence of natural rational fibrations of $\widetilde{M}_{0,n}$ analogous to the forgetful morphisms on 
$\overline{M}_{0,n}$. In particular, (in characteristic $0$) there are no regular contractions of 
$\widetilde{M}_{0,n}$ affecting the interior. 

Subsequently, Hu and Keel asked whether  $\widetilde{M}_{0,n}$ is a Mori dream
space \cite{hu-keel}. If it is, then because every movable divisor is in the interior of  $\PEff(\widetilde{M}_{0,n})$,
Mori chambers cannot share a boundary along a simplicial face of $\PEff(\widetilde{M}_{0,n})$.
It follows that if $\widetilde{M}_{0,n}$ is a Mori dream space, 
then the unique Mori chamber adjacent to the 
face spanned by $\{ \Delta_{j} \mid j\neq i\}$ defines a birational contraction 
$f_i\co \widetilde{M}_{0,n} \dashrightarrow X_i$ such that its exceptional locus 
is $\Exc(f_i)=\bigcup_{j\neq i} \Delta_j$, the set of all boundary divisors but $\Delta_i$.
That a regular contraction $f_2\co \widetilde{M}_{0,n} \ra \Sym^n(\PP^1) \gitq \mathrm{SL}_2$ 
fits the bill for $i=2$ is already observed in \cite{hu-keel}. 
For a general $n$, we are unaware of such $f_i$ for $i\neq 2$.
We note that Rulla computes candidates for the divisors $R_i$ that could be expected to define morphisms $f_i$; 
see \cite[Corollary 6.4]{rulla} and the subsequent remark. However, the question of whether
these divisors are in fact moving and have finitely generated section rings is far from settled. 

For some small $n$, the morphism of Definition \ref{D:torelli} gives us a {\em regular} 
contraction of $\M_{0,n}$ contracting all boundary divisors but one. For example,
the morphism $\tau_{8,2}\co \tM{0,8}\ra \A_3^S$ contracts boundary divisors $\Delta_2$ and $\Delta_4$.

\subsection{Cyclic covering morphisms II: Weighted case}

Motivated by the preceding discussion, 
we generalize the definition of a cyclic covering morphism. 
To this end, we take a sequence of non-negative integers $\vec{d}=(d_1,\dots, d_n)$, called {\em weights} 
and an integer number $p\geq 2$ such that $p\mid \sum_{i=1}^{n} d_i$. 
\begin{definition}\label{D:cyclic-weighted-smooth}
We define a morphism 
$
f_{\vec{d}, p}\co M_{0,n} \ra \M_g,
$ 
by sending a $\PP^1$ marked by $n$ points $p_i=[x_i:1]$
to its degree $p$ cyclic covering totally ramified over $d_1p_1+\cdots+d_n p_n$, 
i.e. the regular model of the function field extension of $\CC(x)$
given by 
$
y^p=(x-x_1)^{d_1}\cdots (x-x_n)^{d_n}.
$
We have that $g=\frac{1}{2}\left[2-2p+\sum_{i=1}^n(p-\gcd(d_i,p)\right]$ by the Riemann-Hurwitz formula.
As before, the morphism extends to $f_{\vec{d},p}\co \M_{0,n}\ra \M_g$.

 We now consider a symmetric variant of the weighted cyclic covering morphism. We define
 \begin{align*}
 f^{S_n}_{\vec{d}, p} = \prod_{\sigma\in S_n} f_{\sigma(\vec{d}), p} \co \M_{0,n} \ra \prod_{\sigma\in S_n} \M_g,
 \end{align*}
Evidently, $f^{S_n}_{\vec{d}, p}$ 
descends to the morphism $f^{S_n}_{\vec{d}, p}\co \tM{0,n} \ra  \prod\limits_{\sigma\in S_n} \M_g$.
 \end{definition}
 
 \begin{definition}[Weighted Hodge class]
 We define 
 \begin{align*}
\lambda_{\vec{d}, p}:=(f^{S_n}_{\vec{d},p})^* \bigotimes_{\sigma \in S_n}  \mathrm{pr}_{\sigma}^*\lambda.
 \end{align*}
 \end{definition}
 
 \begin{prop}\label{P:degree-0}
 If for every partition of $\{1,\dots, n\}$ into subsets 
 of sizes $a,b,c,d$, the weight 
 of one of the partitions is divisible by $p$,
 then the divisor $\lambda_{\vec{d}, p}$ has degree zero on all F-curves of type $(a,b,c,d)$. 
 \end{prop}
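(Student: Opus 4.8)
The plan is to deduce the statement from the unweighted computation already carried out in Corollary~\ref{C:degree-0}. First I would unwind the definitions: since $f^{S_n}_{\vec d, p}=\prod_{\sigma\in S_n}f_{\sigma(\vec d),p}$ and $\lambda_{\vec d,p}=(f^{S_n}_{\vec d,p})^*\bigotimes_{\sigma\in S_n}\mathrm{pr}_\sigma^*\lambda$, for any curve class $F$ in $\tM{0,n}$ the projection formula gives
\[
\lambda_{\vec d,p}\cdot F=\sum_{\sigma\in S_n}f_{\sigma(\vec d),p}^*(\lambda)\cdot F .
\]
So it is enough to show that each summand vanishes whenever $F$ is an F-curve of type $(a,b,c,d)$; here I may fix once and for all a representative $F=F_{I_1,I_2,I_3,I_4}$ of the class $F_{a,b,c,d}$ with $\{|I_1|,|I_2|,|I_3|,|I_4|\}=\{a,b,c,d\}$.

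Next I would fix $\sigma\in S_n$ and describe the family of weighted $\mu_p$-covers with weight vector $\sigma(\vec d)$ over $F$, following the proof of Corollary~\ref{C:degree-0}. This family is obtained by gluing the four constant $\mu_p$-covers of the constant tails to the $\mu_p$-cover of the moving backbone $\PP^1$, so the only component that could vary in moduli is the cover of the backbone. The weight accumulated at the point $p_A$ where the tail indexed by $I_1$ is attached equals $\sum_{i\in I_1}(\sigma(\vec d))_i=\vec d(\sigma^{-1}(I_1))$, and similarly for $p_B,p_C,p_D$; hence the local monodromy of the cover at the four special points of the backbone is $\modp{\vec d(\sigma^{-1}(I_1))}{p},\dots,\modp{\vec d(\sigma^{-1}(I_4))}{p}$.

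Finally I would invoke the hypothesis. The sets $\sigma^{-1}(I_1),\dots,\sigma^{-1}(I_4)$ form a partition of $\{1,\dots,n\}$ into subsets of sizes $a,b,c,d$, so one of the accumulated weights $\vec d(\sigma^{-1}(I_j))$ is divisible by $p$, say the one over $p_A$. Then the cover of the backbone is branched over at most the three points $p_B,p_C,p_D$; normalizing these to $0,1,\infty$ exhibits it as a fixed abstract curve, independent of where $p_A$ lies. Thus no component of the family of weighted $\mu_p$-covers over $F$ varies in moduli, so $\tau\circ f_{\sigma(\vec d),p}$ maps $F$ to a single point of the Satake compactification and $f_{\sigma(\vec d),p}^*(\lambda)\cdot F=0$; summing over $\sigma$ gives the claim.

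I do not expect a genuine obstacle. The one point requiring a little care is exactly the one already present in Corollary~\ref{C:degree-0}: being branched over at most three points forces the backbone cover to be rigid even though its marked points (the preimages of $p_A$) move as the backbone moves in $\overline{M}_{0,4}$ — and this rigidity suffices because $\lambda$ records only the abelian part of the Jacobian, i.e.\ the product of the Jacobians of the normalizations of the components, which does not see the positions of the gluing points. Everything else is routine bookkeeping of the accumulated weights.
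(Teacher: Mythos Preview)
Your proposal is correct and follows essentially the same approach as the paper, which simply says ``the proof is the same as in Corollary~\ref{C:degree-0}.'' You have merely made explicit the reduction to individual summands $f_{\sigma(\vec d),p}^*(\lambda)\cdot F$ and the accumulated-weight bookkeeping that the paper leaves to the reader.
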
 
 \begin{proof}
The proof is the same as in Corollary \ref{C:degree-0}.
 \end{proof}
 
\begin{example}[Divisors on $\Mg{0,7}$] We consider the weight vector $\vec{d}=(1,1,1,1,1,1,0)$. 
 By Proposition \ref{P:degree-0}, the divisor $\lambda_{\vec{d},2}$ has degree $0$ on $F_{2,2,2,1}$. 
 This implies that 
 $\lambda_{\vec{d},2}\sim \Delta_2+\Delta_3$, which is easily seen to be an extremal ray of $\Nef(\tM{0,7})$. Again by Proposition \ref{P:degree-0}, the divisor $\lambda_{\vec{d},3}$ has degree $0$ on $F_{4,1,1,1}$.
 This implies that 
 $\lambda_{\vec{d},3}\sim \Delta_2+3\Delta_3$, which is another extremal ray of $\Nef(\tM{0,7})$. Thus $\lambda_{\vec{d},2}$
 and $\lambda_{\vec{d},3}$ account for all extremal rays of $\Nef(\tM{0,7})$.
 \end{example}

\section{Divisor classes associated to families of stable cyclic covers}
\label{S:divisor-classes}

In this section, we prove our main technical intersection-theoretic results.

\begin{prop}[Pullback formulae for cyclic covering morphisms]
\label{P:pullback}
Let $\vec{d}=(d_1,\dots, d_n)$. For any $p\mid \sum_{i=1}^n d_i$, consider the cyclic covering morphism
$f_{\vec{d},p}\co \M_{0,n} \ra \M_g$ of Definition \ref{D:cyclic-weighted-smooth}. For every boundary divisor 
$\Delta_{I,J}$, set $d(I)=\sum_{i\in I} d_i$. Then the divisor classes
$\lambda, \delta_{\irr}=\delta_0$, and $\delta_{\red}=\delta_1+\cdots+\delta_{\lfloor g/2\rfloor}$ pullback 
as follows:
\begin{eqnarray}\label{E:weighted-pullback}
\begin{aligned}
f_{\vec{d},p}^*(\lambda) &=\frac{1}{12p} \left[ \sum_{i=1}^n \left(p^2-\gcd(d_i,p)^2\right) \psi_i-\sum_{I,J} \left(p^2-\gcd(d(I),p)^2\right)\Delta_{I,J}\right]  \\ 
f_{\vec{d},p}^*(\delta_{\irr}) &= \frac{1}{p}\sum_{\quad I,J: \ \gcd(d(I), p)>1} \gcd(d(I),p)^2\Delta_{I,J} \\
f_{\vec{d},p}^*(\delta_{\red}) &= \frac{1}{p}\sum_{\quad I,J: \ \gcd(d(I), p)=1}\Delta_{I,J}.
\end{aligned}
\end{eqnarray}
In particular, in the unweighted case we have:
\begin{eqnarray}\label{E:pullback}
\begin{aligned}
f_{n,p}^*(\lambda) &=\frac{p}{12}\left(\left(1-\frac{1}{p^2}\right)\psi-\sum_{k} \left(1-\frac{\gcd(k,p)^2}{p^2}\right)\Delta_k\right), \\ 
f_{n,p}^*(\delta_{\irr}) &= \frac{1}{p}\sum_{k: \ \gcd(k,p)>1} \gcd(k,p)^2\Delta_{k}, \\
f_{n,p}^*(\delta_{\red}) &=  \frac{1}{p}\sum_{k: \ \gcd(k,p)=1} \Delta_{k}.
\end{aligned}
\end{eqnarray}
\end{prop}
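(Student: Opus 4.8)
The plan is to realize all three pullbacks as intersection-theoretic invariants of the universal family of stable $\mu_p$-covers over $\M_{0,n}$ built in Section~\ref{S:universal-p-cover}, and then to compute them. Write $\pi\co\mathcal{Z}\to\mathcal{M}$ for that family, $\phi\co\mathcal{Z}\to\mathcal{C}$ for the $\mu_p$-cover of the universal curve $q\co\mathcal{C}\to\M_{0,n}$, and $\sigma_1,\dots,\sigma_n$ for the universal sections. Since $\mathcal{Z}\to\mathcal{M}$ induces $f_{\vec d,p}$, we have $f_{\vec d,p}^*(\lambda)=c_1(\pi_*\omega_{\mathcal{Z}/\mathcal{M}})$, while $f_{\vec d,p}^*(\delta_{\irr})$ and $f_{\vec d,p}^*(\delta_{\red})$ are the classes of the loci over which the stable cyclic cover acquires a non-separating, resp.\ separating, node, counted with multiplicity. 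Because the cyclic cover of a smooth pointed $\PP^1$ is smooth, both of these loci are supported on the boundary $\bigcup_{I,J}\Delta_{I,J}$, so I only need to read off the coefficient of each $\Delta_{I,J}$ from the cover over a general point; and $f_{\vec d,p}^*(\lambda)$ I will obtain from Grothendieck--Riemann--Roch for $\pi$ together with the $\mu_p$-action on $\omega_{\mathcal{Z}/\mathcal{M}}$.

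For the boundary coefficients, fix a general $[\PP^1_I\cup_\nu\PP^1_J]\in\Delta_{I,J}$ and put $e:=\gcd(d(I),p)$. The induced $\mu_p$-cover has local monodromy $\zeta^{d(I)}$ about $\nu$ (the monodromies around the branch points of each component multiply to $1$), hence exactly $e$ nodes over $\nu$, at each of which both branches map with ramification index $p/e$; away from $\nu$ the cover is smooth. By the orbicurve structure of Definition~\ref{D:orbicurve} there is a $\mu_{p/e}$ stabilizer at $\nu$, so over a disk $\Delta_t\subset\M_{0,n}$ meeting $\Delta_{I,J}$ transversally the relevant family of stable covers is pulled back from a root-stack disk $[\Delta_\tau/\mu_{p/e}]$ with $\tau^{p/e}=t$, over which the total space is smooth and all $e$ nodes smooth transversally in $\tau$. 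Thus $f_{\vec d,p}^*(\delta)\cdot\Delta_t=e\big/(p/e)=e^2/p$, i.e.\ $\Delta_{I,J}$ appears in $f_{\vec d,p}^*(\delta_{\irr}+\delta_{\red})$ with coefficient $\gcd(d(I),p)^2/p$. To split this I read off the dual graph of the cover over $\nu$: since the local monodromies on each side generate $\mu_p$, the two ``halves'' $\widetilde Z_I\to\PP^1_I$ and $\widetilde Z_J\to\PP^1_J$ are connected, so the $e$ glue-nodes are all non-separating when $e>1$, while the single glue-node is separating when $e=1$. This gives the asserted coefficients of $\Delta_{I,J}$ in $f_{\vec d,p}^*(\delta_{\irr})$ and $f_{\vec d,p}^*(\delta_{\red})$.

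For $f_{\vec d,p}^*(\lambda)$, the Hurwitz formula for $\phi$ gives $\gcd(d_i,p)$ points over $\sigma_i$, each of ramification index $p/\gcd(d_i,p)$, so $\phi^*\sigma_i=\tfrac{p}{\gcd(d_i,p)}\cdot(\text{their sum})$ and, away from the boundary,
\[
\omega_{\mathcal{Z}/\mathcal{M}}=\phi^*\Bigl(\omega_{\mathcal{C}/\mathcal{M}}\bigl(\textstyle\sum_i\sigma_i\bigr)-\sum_i\tfrac{\gcd(d_i,p)}{p}\,\sigma_i\Bigr)=:\phi^*D.
\]
Pushing down via $\pi_*=q_*\phi_*$ and $\phi_*\phi^*=p\cdot\mathrm{id}$ reduces everything to intersection numbers on the $\PP^1$-fibration $q$: with $q_*\bigl(\omega_{\mathcal{C}/\mathcal{M}}(\sum\sigma)\cdot\sigma_i\bigr)=0$, $q_*(\sigma_i^2)=-\psi_i$ and $q_*\bigl(\omega_{\mathcal{C}/\mathcal{M}}(\sum\sigma)^2\bigr)=\kappa_1=\sum_i\psi_i-\sum_{I,J}\Delta_{I,J}$ on $\M_{0,n}$, one gets $p\,q_*(D^2)=\tfrac1p\sum_i(p^2-\gcd(d_i,p)^2)\psi_i-p\sum_{I,J}\Delta_{I,J}$. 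Feeding this, together with the boundary coefficients above, into Mumford's relation $12\lambda=\kappa_1(\mathcal{Z}/\mathcal{M})+f_{\vec d,p}^*(\delta_{\irr})+f_{\vec d,p}^*(\delta_{\red})$ — the point being that the corrections to $\omega_{\mathcal{Z}/\mathcal{M}}$ over the boundary (ramification of $\phi$ along nodes, and unstable rational tails of the naive cover that must be contracted) conspire to leave $\kappa_1(\mathcal{Z}/\mathcal{M})=p\,q_*(D^2)$ — yields the stated formula for $f_{\vec d,p}^*(\lambda)$. Equivalently one runs Grothendieck--Riemann--Roch eigenspace by eigenspace after the Chevalley--Weil decomposition of $\pi_*\omega_{\mathcal{Z}/\mathcal{M}}$ under $\mu_p$, which also produces the refinement needed for Theorem~A. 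The unweighted formulae~\eqref{E:pullback} are then the case $d_i=1$, collecting the $\Delta_{I,J}$ with fixed $|I|=k$ into $\Delta_k$ and using $\gcd(d(I),p)=\gcd(k,p)$; as a check one may instead intersect both sides with the $F$-curves (which span $N_1(\M_{0,n})$) and with the curves $T_k$ of Construction~\ref{S:test-curves}.

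I expect the boundary bookkeeping to be the crux: making the local multiplicity $\gcd(d(I),p)^2/p$ rigorous — a stable-reduction computation for the local family $\{w^p=x^{d(I)}y^{d(J)}\}\to\Delta_t$ with $t=xy$, plus the verification that away from the node the cover contributes nothing — determining the dual graph, especially the connectedness of each half, of the stable cover over a general point of each $\Delta_{I,J}$, which is exactly what separates the $\delta_{\irr}$ from the $\delta_{\red}$ contribution, and tracking the unstable tails of the naive cover that get contracted so that $\omega_{\mathcal{Z}/\mathcal{M}}$, hence $\kappa_1$, is known over the boundary. By contrast, the interior computation is routine once $\phi^*\sigma_i$ is identified.
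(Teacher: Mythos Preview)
Your outline follows the paper's strategy closely: both compute $f^*\delta_{\irr}$ and $f^*\delta_{\red}$ by analyzing the nodes of the stable $\mu_p$-cover over the boundary, then recover $f^*\lambda$ from Mumford's relation $12\lambda=\kappa+\delta$ together with the Hurwitz description of $\omega_{\mathcal Z}$ as the pullback of $\omega_{\mathcal C}+\Br$. The chief methodological difference is that the paper does not argue globally over $\mathcal M$; it restricts to explicit one-parameter test families (a $\PP^1$-bundle with $n$ sections, two of which collide), performs a degree-$p$ base change, and then resolves the resulting singularities by weighted blow-ups. This produces, at each collision, an exceptional curve $G_{ij}$ meeting the rest of the fiber in $q=\gcd(d_i+d_j,p)$ points sitting on $A_{q-1}$ surface singularities; that is where the $q^2$ in $\delta$ and the discrepancy term $-p^2(1-1/r_i-1/r_j)^2$ in $\omega^2$ come from. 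When the dust settles the discrepancy cancels against the cross-terms in $(\omega_{\X}+\Br)^2$, and one gets exactly your $p\,q_*(D^2)$ for $\kappa$ --- so your ``conspiracy'' is real, but it is precisely the computation the paper carries out, not something one can invoke. Your global root-stack formulation is cleaner to state; the paper's test-curve route is what actually verifies the boundary coefficients.

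Two cautions on your sketch. First, your argument that the halves $\widetilde Z_I,\widetilde Z_J$ are connected because ``the local monodromies on each side generate $\mu_p$'' is not automatic in the weighted case: it requires $\gcd(\{d_i:i\in I\},p)=1$, which can fail even when $\gcd(d(I),p)=e>1$. In such cases unstable rational pieces appear and must be contracted before one can read off which nodes are separating; the bookkeeping still produces the claimed $e^2/p$ in $\delta_{\irr}$, but not for the reason you give. Second, your boundary multiplicity $e\big/(p/e)=e^2/p$ via the $\mu_{p/e}$ root stack assumes the family of stable covers already lives over that root stack with smooth total space and transversal nodes; this is exactly the stable-reduction statement whose proof the paper supplies by hand (weighted blow-up plus normalization, yielding the $A_{q-1}$ singularities). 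So you have the right picture, but the steps you flag as ``the crux'' really are where the content lies, and the paper's proof is essentially a direct execution of them on test curves.
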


\begin{proof} 

Due to the inductive nature of the boundary of $\M_{0,n}$,
it suffices to prove Formulae \eqref{E:weighted-pullback}
for generically smooth families of a special form. Namely, we consider a family $\X' \ra B$
of stable $n$-pointed rational curves with $B\simeq \PP^1$ and with $\X'$ constructed as follows:
Let $\X\ra B$ be a $\PP^1$-bundle with a section $\Sigma_1$ of negative self-intersection $\Sigma_1^2=-r$ 
and $(n-1)$ sections $\{\Sigma_i\}_{i=2}^n$ disjoint from $\Sigma_1$ and in general position. Then we take 
$\X'$ to 
be simply the stabilization of $\X\ra B$, i.e. the ordinary blow-up of $\X$ at the points where 
sections $\{\Sigma_i\}_{i=2}^n$ intersect. We also let $S=\{t_1,\dots, t_m\}$ be 
the points in $B$ over which sections $\{\Sigma_i\}_{i=2}^n$ intersect. (Evidently, $m=\binom{n-1}{2}r$ but 
we will not need this in the sequel.)

The family $\X'\ra B$ induces a map $B\ra \M_{0,n}$. We now follow what happens to $B$ under the cyclic 
$p$-covering morphism $f_{n,p}\co \M_{0,n}\ra \M_g$. 

Observe that $D:=\sum_{i=1}^n d_i \Sigma_i$ will be divisible by $p$ in $\Pic(\X)$ as long as $p\mid r$. 
The family of stable $\mu_p$-covers over $B$ is constructed out of $(\X\ra B; \{\Sigma_i\}_{i=1}^n)$
in the following steps:
\begin{enumerate}
\item[] (Step 1) Apply the cyclic covering trick to construct a $\mu_p$-cover ramified over $D$. 
Obtain the family $\Y\ra B$ of generically smooth $\mu_p$-covers over $B$, with singular fibers over $S$.
\item[] (Step 2)  Consider a finite base change $B'\ra B$ of degree $p$ totally ramified over points in $S$. 
Denote $\Y':=\Y\times_B B'$.
\item[](Step 3)  Perform weighted blow-ups on $\Y'$ to arrive at the stable family $\Z\ra B'$.
\end{enumerate}
We proceed to elaborate on the last step.
The problem is local and so we work locally
around a point where two sections $\Sigma_i$ and $\Sigma_j$ meet. 
The local equation of $\Y'$ at this point is $y^p=(x-at^p)^{d_i}(x-bt^p)^{d_j}$, 
where $t$ is the uniformizer on $B'$. 

Set $q=\gcd(p, d_i+d_j)$. A weighted blow-up 
with weights $w(x,y,t)=(p/q,(d_i+d_j)/q,1)$ followed by the normalization replaces the singularity 
$y^p=(x-at^p)^{d_i}(x-bt^p)^{d_j}$ by a smooth curve $G_{ij}$ which now meets the rest of the fiber in $q$ points. 
The self-intersection of $G_{ij}$ in $\Z$ is $(-1)$. Observe that at  
each of the points where $G_{ij}$ meets the rest of the fiber, $\Z$ has an $A_{q-1}$ singularity. 
It follows that this singular fiber contributes
$q^2$ to $\delta_{\irr}\cdot B'$ if $q>1$, and contributes $1$ to $\delta_{\red}\cdot B'$ if $q=1$.  
 Summarizing:
 \begin{align}\label{E:delta}
 \delta_{\irr}\cdot B'=\sum_{\gcd(d_i+d_j,p)>1} \gcd(d_i+d_j,p)^2\Delta_{ij}, \quad
 \delta_{\red}\cdot B'=\sum_{\gcd(d_i+d_j, p)=1}\Delta_{ij}.
 \end{align}

We proceed to compute the remaining numerical invariants of the family $\Z\ra B'$. First, 
we set $r_i=p/\gcd(d_i,p)$ and consider the branch divisor 
$$
\Br=\sum_{i=1}^n\left(\frac{r_i-1}{r_i}\right)\Sigma_i.
$$
If $\pi\co \Y\ra \X$ is the cyclic cover constructed in (Step 1), 
then $\omega_{\Y/B}=\pi^*(\omega_{\X/B}+\Br)$. Since ${\Y'/B'}$ is obtained from $\Y/B$ by 
a finite base change of degree $p$, we conclude that 
\begin{align}\label{E:base-change}
\omega^2_{\Y'/B'}=p\omega^2_{\Y/B}=p^2(\omega_{\X/B}+\Br)^2.
\end{align}

Next, if we let $\xi\co \Z\ra \Y'$ be the composition of the weighted blow-ups in (Step 3), 
then the exceptional divisors of $\xi$ are precisely curves $G_{ij}$ described above.
We have $\omega_{\Z/B'}=\xi^*(\omega_{\Y'/B'})+\sum_{i<j} a_{ij} G_{ij}$ and our immediate goal is to compute $a_{ij}$.
Observe that each exceptional divisor $G_{ij}$
is a degree $p$ cover of $\PP^1$ with $3$ branch points and the 
branching profile $(\tau^{-d_i-d_j},\tau^{d_i},\tau^{d_j})$ where
$\tau$ is a $p$-cycle in $S_p$. Therefore, by the Riemann-Hurwitz formula, we have
$$
2g(G_{ij})+2p-2=(r_i-1)p/r_i+(r_i-1)p/r_j+(p/q-1)q
$$ 
and so $2g(G_{ij})-2=p-q-p/r_i-p/r_j$ (as before, $q=\gcd(d_i+d_j,p)$). 
To determine $a_{ij}$ we apply adjunction: because the singularities of $\Z$ are Du Val, we have 
$\omega_{\Z/B'}\cdot G_{ij}=\deg \omega_{G_{ij}}+q=p-p/r_i-p/r_j$. Recalling that $G_{ij}^2=-1$, we obtain  
$a_{ij}=p-p/r_i-p/r_j$. It follows that 
\begin{align}\label{E:kappa-Z-Y}
\omega^2_{\Z/B'}=\omega^2_{\Y'/B'}-p^2\sum_{i<j}(1-1/r_i-1/r_j)^2\Delta_{ij}.
\end{align}

Recall that the family $\X'\ra B$ of stable $n$-pointed rational curves associated to 
$\X\ra B$ is obtained by blowing up points where sections $\{\Sigma_i\}_{i=1}^n$ intersect. It follows that 
\begin{align*}
\Br^2 &=-\sum_{i=1}^n \left(\frac{r_i-1}{r_i}\right)^2 \psi_i+\sum_{i<j} \left(2-\frac{1}{r_i}-\frac{1}{r_j}\right)^2\Delta_{ij}, \\
\omega_{\X/B}\cdot \Br &=\sum_{i=1}^n \frac{r_i-1}{r_i}\psi_i-\sum_{i<j}\left(2-\frac{1}{r_i}-\frac{1}{r_j}\right)\Delta_{ij}.
\end{align*}
Using $\omega^2_{\Y'/B'}=p^2(\omega^2_{\X/B}+2\omega_{\X/B}\Br+\Br^2)$ from Equation \eqref{E:base-change}, we compute 
\begin{align}\label{E:kappa-Y}
\omega^2_{\Y'/B'}=p^2\left[\sum_{i=1}^n \frac{r^2_i-1}{r^2_i}\psi_i\right] - p^2\left[\sum_{i<j}\left(2-\frac{1}{r_i}-\frac{1}{r_j}\right)\left(\frac{1}{r_i}+\frac{1}{r_j}\right) \Delta_{ij}\right]
\end{align}

We finally compute the degree of $\lambda$ on the family $\Z\ra B'$. Combining Equations \eqref{E:delta}, \eqref{E:kappa-Z-Y}, \eqref{E:kappa-Y}, and   
Mumford's formula $12\lambda=\kappa+\delta$, we obtain:
\begin{multline*}
 \lambda=\frac{1}{12} \biggl[ p^2\sum_{i=1}^n \frac{r^2_i-1}{r^2_i}\psi_i -  p^2 \sum_{i<j}\left(2-\frac{1}{r_i} 
 -\frac{1}{r_j}\right)\left(\frac{1}{r_i}+\frac{1}{r_j}\right) \Delta_{ij} \\-p^2\sum_{i<j}(1-1/r_i-1/r_j)^2\Delta_{ij}+\sum_{i<j} \gcd(p, d_i+d_j)^2\Delta_{ij} \biggr] \\
=\frac{p^2}{12} \left[ \sum_{i=1}^n \left(1-\frac{\gcd(d_i,p)^2}{p^2}\right) \psi_i-\sum_{i<j} \left(1-\frac{\gcd(p,d_i+d_j)^2}{p^2}\right)\Delta_{ij}\right]
 \end{multline*}
\end{proof}

\section{Eigenbundles of Hodge bundles over $\M_{0,n}$}
\label{S:eigen-hodge-classes}

We continue the study of cyclic covering morphisms $f_{n,p}\co \M_{0,n}\ra \M_g$, defined for every
$p\mid n$ and $g=(n-2)(p-1)/2$. In Section \ref{S:divisor-classes}, we have studied the Hodge class $\lambda_{n,p}$,
which by Definition \ref{D:level1} is the determinant of $\EE$ -- the pullback 
of the Hodge bundle from $\Mg{g}$ via $f_{n,p}$. We now turn our attention to $\EE$ itself. 

By construction, $\EE$ is the Hodge bundle associated   
to the family of stable $\mu_{p}$-covers over $\M_{0,n}$ constructed in Section \ref{S:universal-p-cover}. 
The presence of the $\mu_{p}$-action urges us to consider  
the eigenbundle decomposition of $\EE$.  
We identify characters of $\mu_p$ with integers $\{0,1,\dots, p-1\}$. Let $\alpha$ be a generator of $\mu_p$.
For every character $j$, and every stable cyclic $\mu_p$-cover $C$, we define 
$$
\HH^{0}(C, \omega_{C})_j:=\{ \omega\in \HH^0(C,\omega_C) \mid \alpha\cdot \omega=\alpha^j\omega \}.
$$
We refer to the 1-forms in $\HH^{0}(C, \omega_{C})_{j}$ as {\em forms of weight $j$}. 
Since $\HH^0(C,\omega_C)_0=(0)$, the eigenbundle decomposition of $\EE$ with respect 
to the $\mu_p$-action is
\begin{align*}
\EE=\bigoplus_{j=1}^{p-1}\EE_{j}.
\end{align*}
\subsection{Determinants of eigenbundles $\EE_j$}
Since the Hodge bundle $\EE$ is semipositive \cite{kollar-projectivity}, the eigenbundles $\EE_j$ are semipositive
as well. Hence, their determinants are nef divisors on $\M_{0,n}$. 
At the first sight, the task of computing the determinant of $\EE_{j}$ appears daunting. However, there is one situation 
where this can be done explicitly. Namely, we restrict ourselves to the family of stable 
$\mu_{p}$-covers over an F-curve of type $(a,b,c,d)$. Then the moving component of the stable 
$\mu_{p}$-cover in question is much studied family of cyclic covers defined by the equation
\begin{align}\label{family}
C_{\lambda}: \quad y^{p}=x^{a}(x-1)^{b}(x-\lambda)^{c}, \quad \lambda\in \PP^1. 
\end{align}
Still how does one compute the degree of $\EE_{j}$ on this family? 
Before we answer this question completely,
we describe a situation when the computation is made without any effort. 

\noindent
Observe that 
a rational 1-form of weight $j$ on $C_\lambda$ 
is necessarily of the form $y^{j}dx/f(x)$. Assuming for simplicity that $p$ is coprime to 
$a$, $b$, $c$, and $a+b+c$, we see that $(dx)=(p-1)[0]+(p-1)[1]+(p-1)[\lambda]-(p+1)[\infty]$ and 
$(y)=a[0]+b[1]+c[\lambda]-(a+b+c)[\infty]$. Evidently, $y^{j}dx/f(x)$ can be regular
 if and only if there is an effective integer linear combination of vectors 
 $$(p-1,p-1,p-1,-(p+1)), (ja,jb,jc,-j(a+b+c)), (p,0,0,-p), (0,p,0,-p), (0,0,p,-p).$$ 
To see whether an effective linear combination exists, 
we make the first three entries (corresponding to orders of vanishing at $0,1,\lambda$) as small as possible.
This is clearly achieved by the vector 
\begin{align*}
(\modp{aj-1}{p},\modp{bj-1}{p},\modp{cj-1}{p},  2p-4-\modp{aj-1}{p}-\modp{bj-1}{p}-\modp{cj-1}{p})
\end{align*}
It follows that if $\modp{aj-1}{p}+\modp{bj-1}{p}+\modp{cj-1}{p}\geq 2p-3$, then 
there are no forms of weight $j$ on the generic $C_{\lambda}$, i.e. 
$\HH^0(C_\lambda, \omega_{C_\lambda})_j=(0)$ for the generic $\lambda$. Thus $\det \EE_j\cdot F_{a,b,c,d}=0$.
We summarize the discussion so far in the following proposition.
\begin{prop}\label{P:determinants-prelim}
If $\modp{aj}{p}+\modp{bj}{p}+\modp{cj}{p}+\modp{dj}{p}=3p$, then 
$$\det \EE_j\cdot F_{a,b,c,d}=0.$$
\end{prop}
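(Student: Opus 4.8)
The plan is to reduce the Proposition to the criterion isolated just before it: we observed that $\det\EE_j\cdot F_{a,b,c,d}=0$ as soon as the generic member $C_\lambda$ of the family \eqref{family} carries no regular $1$-form of weight $j$. So it suffices to show that the numerical hypothesis $\modp{aj}{p}+\modp{bj}{p}+\modp{cj}{p}+\modp{dj}{p}=3p$ forces $\HH^0(C_\lambda,\omega_{C_\lambda})_j=(0)$ for generic $\lambda$. Write $(m_1,m_2,m_3,m_4)=(a,b,c,d)$; these are the local monodromy exponents of $C_\lambda\ra\PP^1$ at its four branch points $0,1,\lambda,\infty$, with $\sum_i m_i=n\equiv 0\pmod p$. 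The first step is purely arithmetic: each $\modp{m_ij}{p}$ lies in $\{0,1,\dots,p-1\}$, so four of them sum to $\le 3p-3<3p$ whenever one of the terms vanishes; hence the hypothesis forces $\modp{m_ij}{p}\ge 1$, i.e. $p\nmid m_ij$, for every $i$.

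First I would treat the case $\gcd(j,p)=1$, where the explicit divisor computation preceding the Proposition applies verbatim: now $p\nmid m_ij$ together with $\gcd(j,p)=1$ gives $p\nmid m_i$, so $C_\lambda$ is totally ramified over each of $0,1,\lambda,\infty$ and the formulas $(dx)=(p-1)([0]+[1]+[\lambda])-(p+1)[\infty]$ and $(y)=\sum_i m_i[\,\cdot\,]$ used there are valid. We established that the generic $C_\lambda$ carries no weight-$j$ form once $\modp{aj-1}{p}+\modp{bj-1}{p}+\modp{cj-1}{p}\ge 2p-3$. Since $\modp{m_ij}{p}\ge 1$ we have $\modp{m_ij-1}{p}=\modp{m_ij}{p}-1$, and therefore
\[
\modp{aj-1}{p}+\modp{bj-1}{p}+\modp{cj-1}{p}=\bigl(\modp{aj}{p}+\modp{bj}{p}+\modp{cj}{p}\bigr)-3=3p-\modp{dj}{p}-3\ge 2p-2,
\]
using $\modp{dj}{p}\le p-1$; as $2p-2\ge 2p-3$, the generic $C_\lambda$ has no weight-$j$ form and hence $\det\EE_j\cdot F_{a,b,c,d}=0$.

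The general case $\gcd(j,p)=e>1$ is where I expect the only real friction, since the computation above tacitly assumed $p$ coprime to the exponents. Here I would pass to an intermediate quotient: writing $p=ep'$ and $j=ej'$ with $\gcd(j',p')=1$, every weight-$j$ eigenvector for the $\mu_p$-action is fixed by the subgroup $\mu_e\subset\mu_p$ (as $e\mid j$), so $\HH^0(C_\lambda,\omega_{C_\lambda})_j$ is identified by pullback with an eigenspace on $\overline{C}_\lambda:=C_\lambda/\mu_e$. Setting $z=y^e$ one has $\overline{C}_\lambda\co z^{p'}=x^a(x-1)^b(x-\lambda)^c$, again a cyclic cover of $\PP^1$ with the same four branch exponents read modulo $p'$, and weight-$j$ forms for $\mu_p$ correspond to weight-$j'$ forms for the residual group $\mu_{p'}=\mu_p/\mu_e$. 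From $m_ij=m_iej'$ one gets $\modp{m_ij}{p}=e\,\modp{m_ij'}{p'}$, so the hypothesis becomes $\sum_i\modp{m_ij'}{p'}=3p'$, while $p\nmid m_ij$ and $\gcd(j',p')=1$ give $p'\nmid m_i$; thus the case already settled applies to $\overline{C}_\lambda$ and yields $\HH^0(\overline{C}_\lambda,\omega_{\overline{C}_\lambda})_{j'}=(0)$, hence $\HH^0(C_\lambda,\omega_{C_\lambda})_j=(0)$, for generic $\lambda$. (Alternatively one may avoid the case division entirely via the Chevalley--Weil formula $\dim\HH^0(C_\lambda,\omega_{C_\lambda})_j=-1+\tfrac1p\sum_{i=1}^4\modp{-m_ij}{p}$, which under the hypothesis evaluates to $-1+\tfrac1p(4p-3p)=0$.) Either way, once $\HH^0(C_\lambda,\omega_{C_\lambda})_j=(0)$ generically, the conclusion $\det\EE_j\cdot F_{a,b,c,d}=0$ follows exactly as in the discussion preceding the statement; the sole obstacle is the ramification bookkeeping in the non-coprime case, which the $\mu_e$-quotient above resolves.
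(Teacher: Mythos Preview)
Your overall strategy---show that $\HH^0(C_\lambda,\omega_{C_\lambda})_j=(0)$ for generic $\lambda$---is exactly the paper's; its proof is the one-line forward reference to Lemma~\ref{L:weight-j-4-branched}(a), which carries out the divisor calculation in full generality with the factors $\gcd(m_i,p)$ properly accounted for.

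Your main argument, however, has a gap when $p$ is composite. In Case~1 you deduce from $\gcd(j,p)=1$ and $p\nmid m_ij$ only that $p\nmid m_i$, and then assert that $C_\lambda$ is totally ramified over each branch point. But total ramification requires $\gcd(m_i,p)=1$, which does not follow from $p\nmid m_i$ unless $p$ is prime; for instance $p=6$, $m_1=2$, $j=1$ gives $p\nmid m_1$ yet $\gcd(m_1,p)=2$. With $\gcd(m_i,p)>1$ the divisor formulas $(dx)=(p-1)([0]+[1]+[\lambda])-(p+1)[\infty]$ etc.\ from the discussion preceding the Proposition are simply wrong, and your numerical check $\modp{aj-1}{p}+\modp{bj-1}{p}+\modp{cj-1}{p}\ge 2p-3$ no longer applies. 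Your $\mu_e$-quotient trick in Case~2 reduces to Case~1 with $p'=p/e$ in place of $p$ and so inherits the same defect whenever $p'$ is composite.

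The parenthetical appeal to Chevalley--Weil is a legitimate and self-contained fix, essentially equivalent in content to Lemma~\ref{L:weight-j-4-branched}; if you promote it from an aside to the main argument, the proof is complete. If you want the explicit divisor calculation to stand on its own, you must redo it allowing $\gcd(m_i,p)>1$---which is precisely what Lemma~\ref{L:weight-j-4-branched} does.
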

\begin{proof}
By Lemma \ref{L:weight-j-4-branched}, we have $\HH^0(C,\omega_C)_j=(0)$ for the generic curve in Family \eqref{family}. 
\end{proof}

We now proceed to generalize this observation. We note that 
the results of the following proposition are not new and can be found in 
\cite{bouw-moller} and \cite{eskin-kontsevich-zorich}. 
For completeness, we include proofs 
in Section \ref{S:appendix}. We use the notation of Construction \ref{construction}.
\begin{prop}\label{P:determinants-main}
Let $\EE$ be the Hodge bundle of the universal cyclic $\mu_p$-cover of type $(a,b,c,d)$ over $\M_{0,4}$. 
Then:
\begin{enumerate}
\item The eigenbundle $\EE_j$ has rank $2$ and $c_1(\EE_j)=0$ for all $j\in\{0,1,\dots,p-1\}$  such that 
 \begin{align*}
 \modp{aj}{p}+\modp{bj}{p}+\modp{cj}{p}+\modp{dj}{p}=p.
 \end{align*}
\item The eigenbundle $\EE_j$ has rank $1$ for all $j\in\{0,1,\dots,p-1\}$ such that
 \begin{align*}
 \modp{aj}{p}+\modp{bj}{p}+\modp{cj}{p}+\modp{dj}{p}=2p.
 \end{align*}
Moreover, 
\begin{align*}
\deg \EE_j=\frac{1}{p}\min\{\modp{aj}{p},\modp{bj}{p},\modp{cj}{p},\modp{dj}{p}, 
\modp{-aj}{p},\modp{-bj}{p},\modp{-cj}{p},\modp{-dj}{p}\}.
\end{align*}
\end{enumerate}
\end{prop}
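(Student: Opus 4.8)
The plan is to base every computation on the explicit family
\[
C_\lambda\co y^p=x^a(x-1)^b(x-\lambda)^c,\qquad \lambda\in \M_{0,4}\cong\PP^1,
\]
of cyclic covers of type $(a,b,c,d)$, where the fourth branch point sits at $\infty$ with weight $d\equiv-(a+b+c)\pmod p$; here $\EE$ is the Hodge bundle of the associated universal stable $\mu_p$-cover over $\M_{0,4}$, so that $c_1(\EE)=f^*_{(a,b,c,d),p}(\lambda)$ is already computed in Proposition \ref{P:pullback}.

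First I would read off the ranks. A rational weight-$j$ differential on $C_\lambda$ has the form $h(x)\,y^j\,dx$ with $h\in\CC(x)$, and imposing regularity at the four ramification points bounds the pole orders of $h$ at $0,1,\lambda$ by $\lfloor a_ij/p\rfloor+1$ and bounds its order of vanishing at $\infty$ from below; counting the resulting linear system — this is exactly Lemma \ref{L:weight-j-4-branched} — gives
\[
\dim \HH^0(C_\lambda,\omega_{C_\lambda})_j=\max\Bigl\{0,\ 3-\tfrac1p\bigl(\modp{aj}{p}+\modp{bj}{p}+\modp{cj}{p}+\modp{dj}{p}\bigr)\Bigr\}
\]
for the generic (smooth) fibre. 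This yields the rank statements of (1) and (2) (and recovers Proposition \ref{P:determinants-prelim}, the residue sum $3p$); in the rank-$1$ case it also exhibits the unique generating differential $\omega_j=y^j\,dx\big/\bigl(x^{M_0}(x-1)^{M_1}(x-\lambda)^{M_2}\bigr)$ with $M_i=\lfloor a_ij/p\rfloor+1$, which drives the degree computation.

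The degree statement is the real content, and I would treat the rank-$1$ case first. Since $\omega_j$ trivialises the line bundle $\EE_j$ over the open part, $\deg\EE_j$ is the sum of the orders of vanishing of $\omega_j$, viewed as a rational section of the extension of $\EE_j$ over $\overline{M}_{0,4}$, at the three boundary points $\lambda=0,1,\infty$. At each of them two of the four marked points collide and the cover degenerates; using the stable reduction of Construction \ref{construction} (the weighted-blow-up-and-degree-$p$-base-change procedure already used in the proof of Proposition \ref{P:pullback}) one decides whether the limit of $\omega_j$ lives on the main component or on the newly sprouted component, and with what order, and assembling the three local contributions produces
\[
\deg\EE_j=\frac1p\,\min\bigl\{\modp{aj}{p},\modp{bj}{p},\modp{cj}{p},\modp{dj}{p},\modp{-aj}{p},\modp{-bj}{p},\modp{-cj}{p},\modp{-dj}{p}\bigr\}.
\]
For the rank-$2$ case ($c_1(\EE_j)=0$), I would then argue with no further geometry: each $\EE_j$ is semipositive, hence $\deg\EE_j\ge 0$; the rank-$0$ summands contribute $0$; and because $\deg\EE=\sum_j\deg\EE_j=f^*_{(a,b,c,d),p}(\lambda)\big|_{\M_{0,4}}$ by Proposition \ref{P:pullback}, the combinatorial identity $f^*_{(a,b,c,d),p}(\lambda)|_{\M_{0,4}}=\sum_{j:\,\Sigma_j=2p}\tfrac1p\min\{\modp{aj}{p},\dots,\modp{-dj}{p}\}$ (with $\Sigma_j$ the residue sum) forces $\sum_{j:\,\Sigma_j=p}\deg\EE_j=0$, whence each rank-$2$ summand vanishes. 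Alternatively one can see the rank-$2$ vanishing directly: when $\Sigma_j=p$ the conjugate weight has $\Sigma_{p-j}=3p$, so the weight-$j$ local system is purely of Hodge type $(1,0)$ and $\EE_j$ is the whole Gauss--Manin bundle, and one checks the boundary contributions cancel.

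The genuine obstacle is the rank-$1$ boundary analysis — the bookkeeping of stable reduction of cyclic covers at the three points of $\partial\overline{M}_{0,4}$ and of the orders of vanishing of the Hodge eigenbundle there, which is exactly where the $\min$ comes from; I would follow the treatments in \cite{bouw-moller, eskin-kontsevich-zorich}. A secondary nuisance is the degenerate case $\gcd(a_i,p)>1$, where a branch point has smaller ramification (or disappears): the differential count and the stable model must be adjusted, but the final formulas, depending only on the residues $\modp{a_ij}{p}$, are unchanged.
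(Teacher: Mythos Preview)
Your treatment of the ranks and of the rank-$1$ degree computation is essentially the paper's: you invoke Lemma \ref{L:weight-j-4-branched} to get the dimension formula and then trivialise the line bundle $\EE_j$ by the explicit differential $\omega_j$, tracking its behaviour through the stable reduction of Construction \ref{construction} at the three boundary points of $\M_{0,4}$. This is exactly what the paper does in Proposition \ref{P:determinants}, and you are right that this is where the $\min$ arises.

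Where you diverge from the paper is the rank-$2$ case. The paper does \emph{not} appeal to semipositivity or to Hodge theory; instead it writes down the rational section $\omega_0\wedge\omega_1$ of $\det\EE_j$ (with $\omega_0=x\omega$, $\omega_1=(x-1)\omega$ the two generators from Lemma \ref{L:weight-j-4-branched}(c)) and computes its vanishing orders at $\lambda=0,1,\infty$ directly, obtaining three explicit integers that sum to zero. This is elementary, parallel in method to the rank-$1$ case, and valid in positive characteristic.

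Your primary rank-$2$ argument, by contrast, has a genuine gap: the ``combinatorial identity'' $f^*_{(a,b,c,d),p}(\lambda)\big\vert_{\M_{0,4}}=\sum_{j:\Sigma_j=2p}\tfrac1p\min\{\cdots\}$ is precisely equivalent to the statement $\sum_{j:\Sigma_j=p}\deg\EE_j=0$ that you are trying to prove, and you give no independent verification of it. It is not a standard or transparently easy identity (try writing both sides out from Proposition \ref{P:pullback}), and in fact in the paper the analogous formula (Proposition \ref{P:new-formula}) is \emph{derived from} Proposition \ref{P:determinants}, so invoking it here would be circular. Your alternative Hodge-theoretic route (purely $(1,0)$, hence $\EE_j$ coincides with the $j$-eigenspace of the Deligne extension) is correct in principle and is indeed how \cite{bouw-moller,eskin-kontsevich-zorich} proceed, but ``one checks the boundary contributions cancel'' hides a computation of the degree of the Deligne extension from local monodromy eigenvalues that is comparable in effort to the paper's direct calculation---and, unlike the paper's argument, is confined to characteristic $0$.
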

\begin{proof}
This is Proposition \ref{P:determinants} from Section \ref{S:appendix}.
\end{proof}
Using Propositions \ref{P:determinants-prelim} and \ref{P:determinants-main}, we now prove Theorem A from the introduction.
We begin with a preliminary lemma.
\begin{lemma}\label{lemma}
 Suppose $p\mid n$.
Regard an F-curve of type $(a,b,c,d)$ as a map $\iota\co \M_{0,4}\ra \M_{0,n}$. 
Let $\EE$ be the pullback to $\M_{0,n}$ of the Hodge bundle under the cyclic covering morphism 
$f_{n,p}\co \M_{0,n} \ra \M_{(n-2)(p-1)/2}$ and let $\FF$ be the pullback to $\M_{0,4}$ 
of the Hodge bundle under the weighted cyclic covering morphism $f_{(a,b,c,d), p}\co \M_{0,4}\ra \M_h$.
Then for every character $j$ of $\mu_p$, we have
$$
c_1(\iota^*\EE_j)=c_1(\FF_j).
$$
\end{lemma}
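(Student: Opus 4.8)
The plan is to isolate the \emph{moving part} of the family of stable $\mu_p$-covers sitting over the F-curve, to recognize it as the weighted cyclic cover family of Definition~\ref{D:cyclic-weighted-smooth}, and then to argue that everything else — the components of the cover that are constant in the F-curve parameter, together with the combinatorial gluing data at the nodes — contributes only trivial summands to each Hodge eigenbundle, hence does not affect first Chern classes.

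First I would make the fiberwise geometry explicit. Write $B:=\M_{0,4}$ (passing, if necessary, to the root stack on which the cyclic covering morphisms are regular), and let $\Z_B:=\iota^*\Z\ra B$ be the pullback of the universal stable $\mu_p$-cover of Section~\ref{S:universal-p-cover}; since the $\mu_p$-action is globally defined, $\iota^*\EE=\pi_*\omega_{\Z_B/B}$ and $\iota^*\EE_j$ is its weight-$j$ eigenbundle. As in the proof of Corollary~\ref{C:degree-0}, over the interior $M_{0,4}\subset B$ the fiber of $\Z_B$ over $\lambda$ is the $\mu_p$-cover of the nodal $n$-pointed curve formed by the backbone $\PP^1_\lambda$ and the four constant tails; the only component that varies with $\lambda$ is the cover of the backbone, which is branched exactly over $p_A,p_B,p_C,p_D$ with local monodromies $\sigma^a,\sigma^b,\sigma^c,\sigma^d$. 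This moving cover is precisely Family~\eqref{family}, i.e. the fiber of $f_{(a,b,c,d),p}$, so its weight-$j$ Hodge eigenbundle over $B$ is $\FF_j$. The covers of the four tails are independent of $\lambda$, and the gluing occurs along the $\lambda$-independent, $\mu_p$-stable, finite sets of preimages of the four nodes. The same description persists over the three boundary points of $\M_{0,4}$, where the backbone — and with it Family~\eqref{family} — degenerates, while the four tails stay constant.

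Next I would feed this into the $\mu_p$-equivariant normalization sequence for the dualizing sheaf. Normalizing $\Z_B$ along the preimages of the four nodes produces $\mathcal{C}^{\mathrm{mov}}\sqcup\mathcal{T}$, with $\mathcal{C}^{\mathrm{mov}}\ra B$ the moving cover (the $f_{(a,b,c,d),p}$-family) and $\mathcal{T}\ra B$ constant, and the exact sequence
\[
0\to\omega_{\Z_B/B}\to\nu_*\bigl(\omega_{\mathcal{C}^{\mathrm{mov}}/B}(D)\oplus\omega_{\mathcal{T}/B}(D')\bigr)\xrightarrow{\ \mathrm{res}\ }\mathcal{Q}\to 0,
\]
where $D,D'$ are the node-preimage divisors and $\mathcal{Q}$ is the skyscraper enforcing the residue-matching at the nodes, is $\mu_p$-equivariant. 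Pushing forward to $B$ and taking weight-$j$ eigenpieces exhibits $\iota^*\EE_j$ as the kernel of a map from $\pi_*\!\bigl(\omega_{\mathcal{C}^{\mathrm{mov}}/B}(D)\bigr)_j\oplus\pi_*\!\bigl(\omega_{\mathcal{T}/B}(D')\bigr)_j$ to a locally constant sheaf on $B$. The second summand is a trivial bundle because $\mathcal{T}$ is constant, and since adjoining one pole raises $h^0$ of a line bundle of the form $\omega(\cdot)$ by exactly one (Riemann--Roch), $\pi_*\!\bigl(\omega_{\mathcal{C}^{\mathrm{mov}}/B}(D)\bigr)_j$ differs from $\FF_j$ by a trivial bundle whose rank records the weight-$j$ residue spaces at the node preimages. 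All of these correction bundles have vanishing first Chern class, so $c_1(\iota^*\EE_j)=c_1\bigl(\pi_*(\omega_{\mathcal{C}^{\mathrm{mov}}/B}(D))_j\bigr)=c_1(\FF_j)$.

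The step I expect to be the main obstacle is upgrading the final assertion from ``plausible'' to ``proved'': one must verify that the correction sheaves really are trivial \emph{as bundles on all of $B$}, not merely generically, which forces a careful look at the three boundary points of $\M_{0,4}$ and may require a preliminary finite base change or root-stack construction to rigidify the $\mu_p$-action near the node preimages (dividing degrees back at the end). As a cross-check — and as an alternative derivation in the cases where $\EE_j$ has rank $1$ or $2$ — one can compare $f_{n,p}^*(\lambda)\cdot F_{a,b,c,d}$ computed from Proposition~\ref{P:pullback} with $\deg\FF$ coming from $f_{(a,b,c,d),p}^*(\lambda)$, together with the eigenbundle-degree formula of Proposition~\ref{P:determinants-main}, and see that they agree weight by weight.
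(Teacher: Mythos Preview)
Your approach is correct and matches the paper's, whose proof reads in full: ``$\EE$ is an extension of $\FF$ by a trivial vector bundle on $\M_{0,4}$. The statement follows.'' Your normalization/residue sequence is precisely how one justifies that one sentence.

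Regarding the obstacle you flag: it is not one. The four tails and the node-preimage sections over $B=\M_{0,4}$ are globally constant (only the backbone cover degenerates at the three boundary points), and the residue map gives a canonical trivialization $\omega(D)\vert_D\cong\O_D$ for a reduced divisor of disjoint sections, so every correction bundle is trivial over all of $B$ without any further base change. One small slip: for a single point $p$ one has $h^0(\omega(p))=h^0(\omega)$ by the residue theorem, not $h^0(\omega)+1$; but your conclusion only needs the quotient to be a bundle of constant rank, which it is.
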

\begin{proof}
In the situation of the lemma, 
$\EE$ is an extension of $\FF$ by a trivial vector bundle on $\M_{0,4}$. The statement follows.
\end{proof}

\begin{theorem}\label{T:theorem-A} Let $\EE$ be the pullback to $\M_{0,n}$ 
of the Hodge bundle over $\Mg{g}$ via the cyclic $p$-covering morphism $f_{n,p}$. Let
$\EE_j$ be the eigenbundle of $\EE$ associated to the character $j$ of $\mu_p$. Then
the eigenbundle $\EE_j$ is semipositive. The determinant line bundle $\lambda_{n,p}(j):=\det \EE_j$ is nef and
$$\lambda_{n,p}(j)= \frac{1}{p}\DD^1_{n, jn/p},$$
where $\DD^1_{n, jn/p}$ is the symmetric $\sl_n$ level $1$ conformal blocks divisor associated to the 
fundamental weight $jn/p$.
\end{theorem}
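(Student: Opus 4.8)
The plan is to combine the semipositivity of the Hodge bundle with a curve-class computation on $\tM{0,n}$, reducing everything to intersection numbers against F-curves. Since $\EE$ is the pullback of the Hodge bundle from $\Mg{g}$ via $f_{n,p}$, and the Hodge bundle is semipositive by \cite{kollar-projectivity}, each $\mu_p$-eigenbundle $\EE_j$ is a direct summand of a semipositive bundle, hence semipositive; in particular $\lambda_{n,p}(j)=\det\EE_j$ is nef. The remaining content is the identification $\lambda_{n,p}(j)=\frac{1}{p}\DD^1_{n,jn/p}$. Because both sides lie in the $S_n$-invariant Picard group $\Pic(\tM{0,n})\otimes\QQ$, which is spanned by the boundary classes $\Delta_2,\dots,\Delta_{\lfloor n/2\rfloor}$, it suffices to check that the two divisors have the same intersection number with a spanning set of curve classes in $N_1(\tM{0,n})$. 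I would use the F-curves $F_{a,b,c,d}$ (which span $N_1(\tM{0,n})$) together with the test curves $T_k$ of Construction \ref{S:test-curves} as needed to pin down coefficients.

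First I would compute $\lambda_{n,p}(j)\cdot F_{a,b,c,d}$. By Lemma \ref{lemma}, restricting to an F-curve $\iota\co\M_{0,4}\ra\M_{0,n}$ of type $(a,b,c,d)$ gives $c_1(\iota^*\EE_j)=c_1(\FF_j)$, where $\FF_j$ is the weight-$j$ eigenbundle of the Hodge bundle of the weighted cyclic cover $C_\lambda\co y^p=x^a(x-1)^b(x-\lambda)^c$ over $\M_{0,4}$. Proposition \ref{P:determinants-main} evaluates $\deg\FF_j$ exactly: it is $0$ when $\modp{aj}{p}+\modp{bj}{p}+\modp{cj}{p}+\modp{dj}{p}\in\{p,3p\}$ (using Proposition \ref{P:determinants-prelim} for the value $3p$), and equals $\tfrac{1}{p}\min\{\modp{\pm aj}{p},\modp{\pm bj}{p},\modp{\pm cj}{p},\modp{\pm dj}{p}\}$ when the sum equals $2p$. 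This gives a closed formula for $\lambda_{n,p}(j)\cdot F_{a,b,c,d}$ in terms of residues mod $p$.

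Next I would recall, from \cite{agss} (following \cite{fakh}), the explicit intersection numbers $\DD^1_{n,m}\cdot F_{a,b,c,d}$ of the $\sl_n$ level $1$ conformal blocks divisor attached to the fundamental weight $w_m$; these too are given by a piecewise-linear expression in the residues of $ma$, $mb$, $mc$, $md$ modulo $n$ — concretely an expression of the shape "$\min$ of residues" on the locus where the relevant residues sum appropriately, and $0$ otherwise. Setting $m=jn/p$, the residue $\modp{ma}{n}=\modp{jna/p}{n}=\tfrac{n}{p}\modp{ja}{p}$, so the two combinatorial recipes match termwise after dividing by $p$; verifying this matching for every type $(a,b,c,d)$ is the crux. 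Having checked equality of intersection numbers against the spanning F-curves, I conclude $\lambda_{n,p}(j)=\tfrac{1}{p}\DD^1_{n,jn/p}$ in $N^1(\tM{0,n})$, and then as $\QQ$-line bundles on $\M_{0,n}$ by $S_n$-invariance.

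The main obstacle is the bookkeeping in the last step: one must carefully match the three regimes of Proposition \ref{P:determinants-main} (ranks $2$, $1$, and $0$, i.e. residue-sums $p$, $2p$, $3p$) against the corresponding case division in the Fakhruddin/\cite{agss} formula for $\DD^1_{n,jn/p}\cdot F_{a,b,c,d}$, and to confirm that the "$\min$ of residues" appearing on the rank-$1$ locus is literally the same linear functional on both sides after the substitution $m=jn/p$ and the scaling by $1/p$. A secondary point to be careful about is that $F$-curves span $N_1(\tM{0,n})$ but are not a basis, so one should either argue via a genuine basis or note that any class vanishing on all F-curves is numerically trivial on $\tM{0,n}$; either way this is standard (cf. \cite{keel-mckernan, GKM}).
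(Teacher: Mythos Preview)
Your proposal is correct and follows essentially the same route as the paper's proof: semipositivity of $\EE_j$ via Koll\'ar, then equality with $\tfrac{1}{p}\DD^1_{n,jn/p}$ by matching F-curve intersection numbers using Lemma~\ref{lemma}, Propositions~\ref{P:determinants-prelim}--\ref{P:determinants-main}, and Fakhruddin's formula \cite[Proposition~5.2]{fakh}. The paper states this last comparison tersely, whereas you spell out the residue-matching $m=jn/p$ and the three rank regimes, but the argument is the same.
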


\begin{proof}
By Koll\'{a}r's semipositivity results \cite[Theorem 4.3 and Remark 4.4]{kollar-projectivity}, the Hodge
bundle $\EE$ over $\Mg{g}$ 
is semipositive in characteristic $0$. Thus every eigenbundle $\EE_j$ is semipositive. We conclude 
that $\det(\EE_j)$ is nef. Finally, by Propositions \ref{P:determinants-prelim}--\ref{P:determinants-main}, Lemma \ref{lemma},
and \cite[Proposition 5.2]{fakh}
the degrees of $p\lambda_{n,p}(j)$ and $\DD^1_{n, jn/p}$ are equal on every 
F-curve. 
\end{proof}
The following result shows that by considering weighted cyclic covering morphisms 
every $\sl_p$ level $1$ conformal blocks line bundle on $\M_{0,n}$ arises 
as the determinant of the eigenbundle corresponding to the character $j=1$, after a suitable choice of a weighted
covering morphism.
\begin{theorem}\label{T:theorem-AA} 
For a weight vector $\vec{d}=(d_1,\dots, d_n)$, let $p$ be an integer dividing $\sum_{i=1}^n d_i$.
Denote by $\EE$ the pullback to $\M_{0,n}$ 
of the Hodge bundle over $\Mg{g}$ via the weighted cyclic $p$-covering morphism $f_{\vec{d},p}$.
Let $\EE_1$ be the eigenbundle of $\EE$ associated to the character $j=1$ of $\mu_p$. Then
$\EE_1$ is semipositive and its determinant $\lambda_{\vec{d},p}(1):=\det \EE_1$ is nef. Moreover, 
$$\lambda_{\vec{d},p}(1)=\frac{1}{p} \DD(\sl_p, 1, (w_{d_1},\dots, w_{d_n})),$$
where $\DD(\sl_p, 1, (w_{d_1},\dots, w_{d_n}))$ is the $\sl_p$ level $1$ conformal blocks divisor associated to the 
sequence $(w_{d_1},\dots, w_{d_n})$ of fundamental weights.
\end{theorem}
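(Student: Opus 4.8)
The plan is to run the proof of Theorem \ref{T:theorem-A} with the partition-of-$n$ input replaced by an arbitrary weight vector. Semipositivity is immediate: $\EE$ is the pullback of the Hodge bundle over $\Mg{g}$, which is semipositive in characteristic $0$ by Koll\'ar \cite[Theorem 4.3 and Remark 4.4]{kollar-projectivity}; since the eigenbundle $\EE_1$ is a direct summand of $\EE$, it too is semipositive, and hence $\lambda_{\vec d,p}(1)=\det\EE_1$ is nef. What remains is the numerical identity $p\,\lambda_{\vec d,p}(1)=\DD(\sl_p,1,(w_{d_1},\dots,w_{d_n}))$ in $\Pic(\M_{0,n})\otimes\QQ$. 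Because a $\QQ$-divisor class on $\M_{0,n}$ is determined by its degrees on the F-curves, it suffices to match the two sides on an arbitrary F-curve $F_{I_1,I_2,I_3,I_4}$.

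Fix such an F-curve and regard it as a map $\iota\co\M_{0,4}\ra\M_{0,n}$. First I would establish the evident weighted analogue of Lemma \ref{lemma}: writing $\FF$ for the pullback to $\M_{0,4}$ of the Hodge bundle under the weighted cyclic covering morphism $f_{(d(I_1),d(I_2),d(I_3),d(I_4)),\,p}$, the bundle $\iota^*\EE$ is, $\mu_p$-equivariantly, an extension of $\FF$ by a trivial vector bundle, so that $c_1(\iota^*\EE_1)=c_1(\FF_1)$. The point, exactly as in Lemma \ref{lemma}, is that the only component of the stable $\mu_p$-cover over $F_{I_1,I_2,I_3,I_4}$ moving in moduli is the cover of the four-pointed backbone, namely the family $C_\lambda\co y^p=x^{d(I_1)}(x-1)^{d(I_2)}(x-\lambda)^{d(I_3)}$ of Family \eqref{family}; the remaining components are constant.

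Next I would compute $\deg\FF_1$ using Propositions \ref{P:determinants-prelim} and \ref{P:determinants-main} applied with $(a,b,c,d):=(\modp{d(I_1)}{p},\modp{d(I_2)}{p},\modp{d(I_3)}{p},\modp{d(I_4)}{p})$ and $j=1$; note $\modp{a}{p}+\modp{b}{p}+\modp{c}{p}+\modp{d}{p}\in\{0,p,2p,3p\}$ because $\sum_i d_i\equiv0\pmod p$. These propositions give $\deg\FF_1=0$ unless this sum equals $2p$, in which case $\FF_1$ has rank $1$ and $\deg\FF_1=\tfrac1p\min\{\modp{a}{p},\modp{b}{p},\modp{c}{p},\modp{d}{p},\modp{-a}{p},\modp{-b}{p},\modp{-c}{p},\modp{-d}{p}\}$. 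Finally I would compare with the intersection number of $\DD(\sl_p,1,(w_{d_1},\dots,w_{d_n}))$ with $F_{I_1,I_2,I_3,I_4}$: by Fakhruddin's formula \cite[Proposition 5.2]{fakh}, together with the fact that the fusion rules of $\sl_p$ at level $1$ are those of $\ZZ/p$ (so that the only data entering the computation on the backbone are the residues $\modp{d(I_k)}{p}$), this number equals $p\deg\FF_1$, which is the desired equality.

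The main obstacle is this last comparison: one must verify that the trichotomy (sum of residues $=p$, $2p$, or $3p$) together with the $\min$-formula of Proposition \ref{P:determinants-main} reproduces, term by term, Fakhruddin's combinatorial formula for the degree of an $\sl_p$ level $1$ conformal blocks divisor on $\M_{0,4}$, including the degenerate cases in which some $d_i\equiv0\pmod p$, where $w_{d_i}$ is the trivial weight and the $i$-th marked point is unbranched on the cover. This is precisely the verification already carried out in the proof of Theorem \ref{T:theorem-A} (there with arbitrary $j$ and $(a,b,c,d)$ a partition of $n$); passing from a partition of $n$ to an arbitrary weight vector introduces no new phenomena, so no new ideas are needed and everything else in the argument is formal.
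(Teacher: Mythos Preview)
Your proposal is correct and follows essentially the same approach as the paper, which simply says ``The proof is the same as that of Theorem \ref{T:theorem-A}'': Koll\'ar semipositivity for the eigenbundle, reduction to F-curves via the weighted analogue of Lemma \ref{lemma}, computation by Propositions \ref{P:determinants-prelim}--\ref{P:determinants-main}, and comparison with Fakhruddin's \cite[Proposition 5.2]{fakh}. Your expansion of the details (including the degenerate case $d_i\equiv 0$) is accurate and only fleshes out what the paper leaves implicit.
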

\begin{proof}
The proof is the same as that of Theorem \ref{T:theorem-A}.
\end{proof}

As a corollary of Theorem \ref{T:main-theorem-lambda} and Theorem \ref{T:theorem-A}, 
we obtain the following result.
\begin{prop} Let $\EE$ be the pullback of the Hodge bundle over $\Mg{g}$ to $\M_{0,n}$ via
the cyclic $p$-covering morphism $f_{n,p}$. Then for every $j=1,\dots, p-1$, the morphism associated to the
semiample line bundle $\lambda_{n,p}(j)\sim \DD^1_{n,jn/p}$ contracts boundary 
divisors $\Delta_k$ with $p\mid k$. 
\end{prop}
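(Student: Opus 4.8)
The plan is to reduce the claim, exactly as in the proof of Theorem~\ref{T:main-theorem-lambda}, to the vanishing of $\lambda_{n,p}(j)$ on a covering family of curves in $\Delta_k$. First I would record that $\lambda_{n,p}(j)\sim\DD^1_{n,jn/p}$ is semiample---$\sl_n$ level $1$ conformal blocks bundles are globally generated by~\cite{fakh}, and alternatively $\DD^1_{n,jn/p}$ is a pullback of a polarization on a GIT quotient by~\cite{gian}---so that $\lambda_{n,p}(j)$ genuinely defines a morphism $\phi_j\co\M_{0,n}\ra Y_j$ to a projective variety, and an irreducible curve $C$ is contracted by $\phi_j$ precisely when $\lambda_{n,p}(j)\cdot C=0$.

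The main step will be to show that $\lambda_{n,p}(j)\cdot T_k=0$ for every $j$ whenever $p\mid k$. When $p\ge 3$, the divisibility $p\mid k$ forces $k\ge 3$, so the covering curve $T_k$ of $\Delta_k$ from Construction~\ref{S:test-curves} is available; the remaining case $p=2$ (where $k$ may equal $2$) is already contained in Theorem~\ref{T:main-theorem-lambda}, since then $\lambda_{n,2}(1)=\det\EE_1=\lambda_{n,2}$. So assume $p\ge 3$. Since $\HH^0(C,\omega_C)_0=(0)$ for every stable $\mu_p$-cover, the Hodge bundle $\EE$ of the universal stable $\mu_p$-cover over $\M_{0,n}$ has no weight-$0$ summand, so $\EE=\bigoplus_{j=1}^{p-1}\EE_j$ and, taking first Chern classes,
\[
\lambda_{n,p}=c_1(\EE)=\sum_{j=1}^{p-1}c_1(\EE_j)=\sum_{j=1}^{p-1}\lambda_{n,p}(j).
\]
By Theorem~\ref{T:theorem-A} each $\lambda_{n,p}(j)$ is nef, so each intersection number $\lambda_{n,p}(j)\cdot T_k$ is $\ge 0$, while their sum $\lambda_{n,p}\cdot T_k$ equals $0$ by Corollary~\ref{C:degree-0}(2). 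A vanishing sum of nonnegative numbers has all its terms zero, so $\lambda_{n,p}(j)\cdot T_k=0$ for every $j$. (Geometrically this merely reflects the fact that, for $p\mid k$, both components of the stable $\mu_p$-cover over $T_k$ are branched at fixed marked points and hence do not vary in moduli; the argument above conveniently sidesteps inspecting the finitely many degenerate fibers of the family $T_k$.)

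Finally I would conclude as in the proof of Theorem~\ref{T:main-theorem-lambda}: the deformations of $T_k$ sweep out $\Delta_k$, so a curve of type $T_k$ passes through a general point of $\Delta_k$ and is contracted by $\phi_j$ by the previous step; therefore $\dim\phi_j(\Delta_k)<\dim\Delta_k$, i.e.\ $\phi_j$ contracts $\Delta_k$. I do not expect a serious obstacle here: once semiampleness of the conformal blocks divisor is granted, the argument is essentially bookkeeping. The one point worth isolating is the passage from the single equality $\lambda_{n,p}\cdot T_k=0$ to the vanishing of each eigen-piece $\lambda_{n,p}(j)\cdot T_k$ --- this is exactly where additivity of $c_1$ over the eigenbundle decomposition, together with the nefness of the individual eigenbundle determinants from Theorem~\ref{T:theorem-A}, does the work.
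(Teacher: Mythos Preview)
Your argument is correct and is essentially the paper's own proof spelled out in more detail: the paper establishes semiampleness via \cite[Lemma~2.5]{fakh}, writes $f_{n,p}^*(\lambda)$ as the effective combination $\sum_{j=1}^{p-1}\lambda_{n,p}(j)$ of nef divisors coming from the eigenbundle decomposition, and then invokes Theorem~\ref{T:main-theorem-lambda} to conclude that each summand contracts $\Delta_k$. Your explicit passage through the covering curves $T_k$ and the separate treatment of $p=2$ make the implicit step in the paper's proof (a sum of nef divisors vanishing on a curve forces each summand to vanish) transparent, but the route is the same.
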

\begin{proof} Each line bundle $\lambda_{n,p}(j)$ is semiample because by Theorem \ref{T:theorem-A} 
it is a multiple of a 
conformal blocks line bundle $\DD^1_{n,jn/p}$, which is generated by global sections \cite[Lemma 2.5]{fakh}. 
From the eigenbundle decomposition $\EE=\bigoplus_{j=1}^{p-1} \EE_j$, we deduce that $f^*_{n,p}(\lambda)$
is an effective combination of $\lambda_{n,p}(j)$ for $j=1,\dots, p-1$. Since the morphism associated to 
$f^*_{n,p}(\lambda)$ contracts all $\Delta_k$ with $p\mid k$ by Theorem \ref{T:main-theorem-lambda}, 
the same holds for each 
$\lambda_{n,p}(j)$.
\end{proof}

 \begin{example}[The $\psi$-class] Take $\vec{d}=(n-2,2,1,1,\dots, 1)$ and $p=n-1$. Consider the weighted cyclic covering morphism $f_{\vec{d},p}\co \M_{0,n}\ra \M_g$
 and let $\EE$ be the pullback of the Hodge bundle from $\M_g$. Let $\lambda_{\vec{d},p}(1)$ 
 be the determinant of the eigenbundle $\EE_1$ corresponding to the character $j=1$ of $\mu_{p}$.
 Then 
 $$
\psi \sim \sum_{\sigma\in S_n} \lambda_{\sigma(\vec{d}),p}(1).
 $$
 \end{example}

We finish this section by giving a new formula for the classes of line bundles $\lambda_{\vec{d},p}(j)$ and,
therefore, for all $\sl_p$ level $1$ conformal blocks divisors.
As will be clear from the proof of the proposition, the novelty here is 
in finding an expression for the divisor classes in question that behaves nicely under 
restriction to the boundary. 
\begin{prop}\label{P:new-formula}
For a weight vector $\vec{d}=(d_1,\dots, d_n)$, let $p$ be an integer dividing $\sum_{i=1}^n d_i$.
Let $\EE$ be the pullback to $\M_{0,n}$ of the Hodge bundle over $\Mg{g}$ via the weighted cyclic $p$-covering morphism $f_{\vec{d},p}$ and let $\EE_j$ be the eigenbundle of $\EE$ associated to the character $j$ of $\mu_p$. 
Then\footnote{As before, we denote by $\modp{a}{p}$ the representative in $\{0,1,\dots, p-1\}$ of the residue of $a$ modulo $p$.}
\[
\lambda_{\vec{d},p}(j):=\det \EE_j =\frac{1}{2p^2}\left[\sum_{i=1}^n \modp{jd_i}{p}\modp{p-jd_i}{p}\psi_i
-\sum_{I,J} \modp{jd(I)}{p}\modp{jd(J)}{p}\Delta_{I,J}\right].
\]
\end{prop}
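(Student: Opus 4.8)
The plan is to prove the asserted identity of divisor classes by intersecting both sides with all F-curves. Write $\Lambda$ for the right-hand side, so the claim is $\Lambda=\lambda_{\vec d,p}(j)=\det\EE_j$. Since F-curves span $N_1(\M_{0,n})$ --- the principle already used in the proof of Theorem \ref{T:theorem-A} --- it suffices to show $\Lambda\cdot F=\det\EE_j\cdot F$ for an arbitrary F-curve $F=F_{I_1,I_2,I_3,I_4}$. Throughout set $a_\ell:=\modp{jd(I_\ell)}{p}$; since $p\mid\sum_i d_i$ one has $a_1+a_2+a_3+a_4\equiv 0\pmod p$, so this sum is $0$, $p$, $2p$, or $3p$.

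For the left-hand side, the argument of Lemma \ref{lemma}, applied verbatim to the weighted morphism $f_{\vec d,p}$, shows that $\iota^*\EE_j$ is an extension of $\FF_j$ by a trivial bundle, where $\FF$ is the Hodge bundle of the weighted cyclic $p$-covering morphism $f_{(d(I_1),\dots,d(I_4)),p}\co\M_{0,4}\ra\M_h$; hence $\det\EE_j\cdot F=\deg\FF_j$. Feeding the residues $a_\ell$ into Propositions \ref{P:determinants-prelim} and \ref{P:determinants-main}, this degree is $0$ when $a_1+a_2+a_3+a_4\in\{0,p,3p\}$ and equals $\tfrac1p\min\{a_1,a_2,a_3,a_4,\modp{-a_1}{p},\modp{-a_2}{p},\modp{-a_3}{p},\modp{-a_4}{p}\}$ when the sum is $2p$. (When the sum is $0$, all $a_\ell$ vanish; then, after replacing $p$ by $p/\gcd(j,p)$ --- which does not alter $\EE_j$ --- the backbone cover splits completely, so every moving component over $F$ is rational; as $\lambda=\sum_{j'}\det\EE_{j'}$ then has degree $0$ on $F$ while each $\det\EE_{j'}$ is nef, we get $\det\EE_j\cdot F=0$.)

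For the right-hand side I would use the classical intersection numbers on $\M_{0,n}$ (see \cite{HM,keel-mckernan}): $\psi_i\cdot F$ equals $1$ precisely when $\{i\}$ is one of the four parts $I_\ell$, and $0$ otherwise; while $\Delta_{A,B}\cdot F$ equals $-1$ when one of $A,B$ is a single part $I_\ell$, equals $+1$ when $\{A,B\}$ groups the four parts into two pairs, and equals $0$ otherwise. Substituting, and simplifying by means of $\modp{jd_i}{p}\modp{p-jd_i}{p}=a(p-a)$ (valid for $a=\modp{jd_i}{p}$, including $a=0$) together with the congruences $d(I_\ell^c)\equiv-d(I_\ell)$ and $d(I_{\ell''}\cup I_{\ell'''})\equiv-d(I_\ell\cup I_{\ell'})\pmod p$, the $\psi$-terms at the singleton parts combine with the $(-1)$-contributions so that all four parts enter on equal footing, leaving
\[
\Lambda\cdot F=\frac{1}{2p^2}\Bigl(\sum_{\ell=1}^{4}a_\ell(p-a_\ell)-\sum\nolimits_{\{\ell,\ell'\}}\modp{a_\ell+a_{\ell'}}{p}\,\modp{a_{\ell''}+a_{\ell'''}}{p}\Bigr),
\]
the second sum running over the three ways to split $\{1,2,3,4\}$ into two pairs $\{\ell,\ell'\}$, $\{\ell'',\ell'''\}$.

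What then remains is a purely elementary identity: for $a_1,\dots,a_4\in\{0,\dots,p-1\}$ with $p\mid\sum a_\ell$, the displayed quantity equals the value of $\deg\FF_j$ recorded above. I would establish it by first noting $\sum_{\{\ell,\ell'\}}(a_\ell+a_{\ell'})(a_{\ell''}+a_{\ell'''})=(\sum a_\ell)^2-\sum a_\ell^2$, and then, after ordering $a_1\le a_2\le a_3\le a_4$, tracking for each of the three pairs which summand exceeds $p$ and is therefore reduced modulo $p$: the expression collapses to $0$ when $\sum a_\ell\in\{0,p,3p\}$, and to $\tfrac1p\min\{a_1,\,p-a_4\}$ when $\sum a_\ell=2p$ (the subcases $a_1+a_4\le p$ and $a_1+a_4>p$ producing $a_1/p$ and $(p-a_4)/p$), which is exactly the minimum of Proposition \ref{P:determinants-main}. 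I expect the only genuine friction to be this bookkeeping --- the $\pm1$ pattern of the F-curve intersections, and the case analysis of the modular reductions --- rather than anything conceptual. An alternative, closer to the remark preceding the statement, would be induction on $n$: one checks directly that $\Lambda$ restricts to a boundary divisor $\Delta_{I,J}$ as the sum of the two analogous classes on the two factors, the coefficient $\modp{jd(I)}{p}\modp{jd(J)}{p}$ of $\Delta_{I,J}$ becoming the $\psi$-coefficient of the node on either side, and then matches this against the corresponding degeneration of $\EE_j$ --- there the delicate point is describing the weight-$j$ part of $\HH^0$ of a reducible stable $\mu_p$-cover.
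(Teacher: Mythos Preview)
Your proposal is correct and follows essentially the same route as the paper's proof: both reduce to checking equality on F-curves, compute the intersection of the right-hand side with $F_{I_1,I_2,I_3,I_4}$ using the standard $\psi$ and $\Delta$ pairings, and compare with the degrees of $\FF_j$ from Proposition~\ref{P:determinants}. The paper invokes the functorial restriction of both sides to the boundary (your ``alternative'' inductive approach) to justify the reduction, then carries out exactly the case analysis you describe, using the symmetry $j\mapsto p-j$ to treat only the cases $\sum a_\ell\in\{p,2p\}$; your handling of $\sum a_\ell\in\{0,3p\}$ is extra but harmless, and your argument for the $\sum a_\ell=0$ case via passage to $p/\gcd(j,p)$ is valid (alternatively, Lemma~\ref{L:weight-j-4-branched} gives $\FF_j=0$ directly when some $a_\ell$ vanishes).
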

\begin{remark}
To get a formula for $\DD(\sl_p, 1, (d_1,\dots,d_n))$, take $j=1$ and multiply by $p$.
\end{remark}
\begin{proof} 
Set $\D(d_1,\dots,d_n):=\sum_{i=1}^n \modp{jd_i}{p}\modp{p-jd_i}{p}\psi_i
-\sum_{I,J} \modp{jd(I)}{p}\modp{jd(J)}{p}\Delta_{I,J}$. 
We note immediately that both $\EE_j$ and 
$\D(d_1,\dots,d_n)$ behave functorially under restriction to the boundary. Namely, if $I=(i_1,\dots, i_k)$ and
$J=(j_1, \dots, j_{n-k})$ and $B\subset \Delta_{I,J} \subset \M_{0,n}$ is a family of generically reducible curves
obtained by gluing families $B_1\subset \M_{0,k+1}$ and $B_2\subset \M_{0,n-k+1}$, then
\[
\D(d_1,\dots,d_n)\cdot B=D(d_{i_1},\dots, d_{i_k}, \sum_{r\in J} d_r)\cdot B_1+
\D( \sum_{r\in I} d_r, d_{j_1},\dots, d_{j_{n-k}})\cdot B_2.
\]
The same holds for $\lambda_{\vec{d},p}(j)$. 
Therefore, it suffices to show that the degrees of two divisor classes are the same on all F-curves.

Consider the F-curve $F:=F_{I_1,I_2,I_3,I_4}$. The moving family over any
 F-curve has exactly $4$ sections of self-intersection $(-1)$ and exactly $3$ nodal fibers.
  Denote 
  $$a:=\modp{jd(I_1)}{p}, \ b:=\modp{jd(I_2)}{p}, \ c:=\modp{jd(I_3)}{p},  \
d:=\modp{jd(I_4)}{p},$$ and suppose without loss of generality that $a\leq b\leq c \leq d$.
Since both $\D$ and $\lambda_{\vec{d},p}(j)$ are invariant under substitution 
$j\mapsto p-j$, it suffices to treat the following cases:

{\em Case 1: $a+b+c+d=2p$.} In this case the degree of $\D$ on $F$ is 
\begin{multline*}
a(p-a)+b(p-b)+c(p-c)+d(p-d)\\ -(a+b)(p-a-b)-(a+c)(p-a-c)-(a+d)(p-a-d) =2pa,
\end{multline*}
if $a+d\leq p$, and is 
\begin{multline*}
a(p-a)+b(p-b)+c(p-c)+d(p-d)\\ -(a+b)(p-a-b)-(a+c)(p-a-c)-(b+c)(p-b-c) =2p(p-d),
\end{multline*}
if $a+d\geq p$.

{\em Case 2: $a+b+c+d=p$.} In this case the degree of $\D$ on $F$ is 
\begin{multline*}
a(p-a)+b(p-b)+c(p-c)+d(p-d)\\ -(a+b)(p-a-b)-(a+c)(p-a-c)-(a+d)(p-a-d) =0.
\end{multline*}
By comparing these formulae with the formulae of Proposition \ref{P:determinants} we conclude the proof.

\end{proof}

\section{New extremal rays of $\Nef(\tM{0,n})$} 
\label{S:new}

In this section, we prove  Theorem C from the introduction and thus 
construct new extremal rays of $\Nef(\tM{0,n})$. 
Our approach is directly via the intersection theory for one-parameter families of curves. 
The key ingredient of our construction is the following result due to Stankova:
\begin{prop}[\text{\cite[Theorem 7.3]{stankova}}]
\label{L:stankova}
 If $\X\ra C$ is a family of generically
smooth trigonal curves of genus $g$, then $(\delta_{\irr}\cdot C)/(\lambda\cdot C)\leq 36(g+1)/(5g+1)$.
\end{prop}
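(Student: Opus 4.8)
The plan is to realize the trigonal family inside a scroll and reduce the inequality to a linear-programming problem in the numerical invariants of that scroll. First I would invoke Miranda's structure theory of triple covers: a trigonal curve $X$ of genus $g$ with its degree-$3$ map $\phi\colon X \to \PP^1$ is recovered from the rank-$2$ Tschirnhausen bundle $\E$ (defined by $\phi_*\O_X = \O \oplus \E^\vee$), and $X$ embeds in the Hirzebruch surface $\PP(\E)$ as a divisor in the class $3\xi + \pi^*\mathfrak{a}$, where $\xi$ is the tautological class. Carrying this out in families, after a base change that does not affect the ratio in question I may assume $\X \to C$ carries a relative trigonal structure; this places $\X$ inside a threefold scroll $W = \PP(\E) \to S \to C$, where $S \to C$ is a ruled surface (the relative base $\PP^1$ of the trigonal maps) and $\E$ is a rank-$2$ bundle on $S$, with $\X \in |3\xi + \pi^*\mathfrak{a}|$ for a divisor class $\mathfrak{a}$ on $S$.

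The second step is to convert $\lambda\cdot C$, $\delta\cdot C$, and $\delta_{\irr}\cdot C$ into intersection numbers on $W$. Using adjunction, $\omega_{\X/C} = \bigl(\omega_{W/C}\otimes\O_W(\X)\bigr)|_\X$, where $\omega_{W/C}$ has the form $\O_W(-2\xi)\otimes\pi^*\mathfrak{c}$ for an explicit divisor class $\mathfrak{c}$ on $S$ built from $\omega_{S/C}$ and $\det\E$; hence $\kappa_{\X/C}=\omega_{\X/C}^2$ is a polynomial in $\xi^3$, $\xi^2\cdot\pi^*(\,\cdot\,)$, and the Chern numbers of $\E$ and of $S/C$. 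Mumford's relation $12\lambda = \kappa + \delta$ then expresses $\lambda\cdot C$ through the same data, while $\delta\cdot C$ is read off from the discriminant of the relative cubic $\X \subset W$. The crucial refinement is to separate $\delta_{\irr}$ from $\delta_{\red}$: the irreducible nodal degenerations correspond to nodes acquired by the trigonal curve inside a smooth fiber of $W\to C$, so $\delta_{\irr}\cdot C$ is computed from the self-intersection of $\X$ and the singularity contributions along the scroll, whereas reducible fibers feed only into $\delta_{\red}$.

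Finally I would encode everything in a small set of nonnegative parameters — the degree of $\mathfrak{a}$ along a fiber, the relative degree of $\det\E$, the Maroni invariant, and the ruled-surface invariants of $S/C$ — subject to the constraints dictated by positivity: $\delta_{\red}\cdot C = \delta\cdot C - \delta_{\irr}\cdot C \ge 0$, nonnegativity of the Maroni invariant, and the semipositivity forced by nefness of the tautological class on $W$ and of $\E$ on the ruled surface $S$. With $g$ determined by these parameters through adjunction, the target inequality $(5g+1)\,\delta_{\irr}\cdot C \le 36(g+1)\,\lambda\cdot C$ becomes a single linear inequality on the resulting polyhedral cone, and optimizing the corresponding linear functional produces the extremal coefficient $36(g+1)/(5g+1)$, attained on the Maroni-generic family. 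The hardest part will be extracting the correct and complete list of positivity constraints — in particular isolating $\delta_{\irr}$ cleanly and controlling the loci where the Maroni invariant jumps, so that special fibers are split off into $\delta_{\red}$ rather than being miscounted — since it is exactly the sharpness of these constraints that pins the ratio to $36(g+1)/(5g+1)$ rather than to a weaker bound.
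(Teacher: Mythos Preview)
Your proposal attempts to reprove Stankova's theorem from scratch, whereas the paper simply \emph{cites} it. The paper's proof is two lines: Stankova actually proves the stronger bound $(\delta\cdot C)\leq \frac{36(g+1)}{5g+1}(\lambda\cdot C)$ for the \emph{total} boundary class $\delta$, and then one uses the trivial inequality $\delta_{\irr}\cdot C\leq \delta\cdot C$, which holds because $\delta_{\red}\cdot C\geq 0$ on any generically smooth family. That is the entire argument.

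Your outline is, in broad strokes, the route Stankova herself takes (embedding the family in a relative scroll via the Tschirnhausen bundle and computing $\lambda$ and $\delta$ in terms of the scroll invariants), so it is not wrong as a strategy for the underlying theorem. But you make the problem harder than it needs to be by trying to isolate $\delta_{\irr}$ from $\delta_{\red}$ directly inside the scroll model. Your proposed dichotomy---irreducible nodes come from singularities acquired inside a smooth scroll fiber, reducible ones from degenerations of the scroll---is not accurate: a trigonal curve inside a perfectly smooth Hirzebruch fiber can split into components, and conversely. Disentangling the two contributions at the level of the scroll is genuinely delicate, and is exactly what the paper's approach avoids: bound all of $\delta$ at once (which is what the scroll computation naturally produces), then throw away the nonnegative $\delta_{\red}$ term at the end.
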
 
\begin{proof}
In fact, Stankova proves a stronger inequality
$$(\delta \cdot C) \leq \frac{36(g+1)}{(5g+1)} (\lambda\cdot C).$$
The statement now follows 
from $(\delta \cdot C)=(\delta_{\irr}\cdot C)+(\delta_{\red} \cdot C)\geq (\delta_{\irr}\cdot C)$ for any generically
smooth family of stable curves.
\end{proof}
\begin{theorem}\label{T:3}
Suppose $3\mid n$. Consider the cyclic covering morphism $f_{n,3}\co \M_{0,n}\ra \M_{n-2}$. The line bundle  
$$f^*_{n,3}(9\lambda-\delta_{\irr})=2\psi-2\Delta-\sum_{3\mid k}\Delta_{k}$$
generates an extremal ray of $\tM{0,n}$.
\end{theorem}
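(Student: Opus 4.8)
The plan is to prove two things: first, that $f^*_{n,3}(9\lambda-\delta_{\irr})$ is nef on $\tM{0,n}$, and second, that it spans an extremal ray of $\Nef(\tM{0,n})$. For nefness, the key point is to verify that $9\lambda-\delta_{\irr}$ has non-negative degree on every curve in the image $f_{n,3}(\tM{0,n})\subset\M_{n-2}$, and more precisely on the normalization of any complete curve mapping to that locus. Every such curve is (up to base change) a family of stable $\mu_3$-covers of $\PP^1$, hence a family of \emph{trigonal} curves of genus $g=n-2$. Stankova's inequality (Proposition \ref{L:stankova}) gives $(\delta_{\irr}\cdot C)/(\lambda\cdot C)\le 36(g+1)/(5g+1)$ for generically smooth trigonal families; when $g=n-2\ge 7$ (i.e. $n\ge 9$), one has $36(g+1)/(5g+1)\le 9$, so $9\lambda-\delta_{\irr}$ is nef along such families, and by passing to the boundary inductively (using the explicit pullback formulae of Proposition \ref{P:pullback}, which express the restriction in terms of lower $\M_{0,m}$) one reduces the general case to the generically smooth one. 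The small cases $n=3,6$ (where $g\le 4$) must be checked by hand using the explicit divisor class $2\psi-2\Delta-\sum_{3\mid k}\Delta_k$ and the F-curve computations; for $n=6$ this is already done in Section \ref{S:n=6}, where $D_1\sim f_{3}^*(9\lambda-\delta_{\irr})$.

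Next I would establish the explicit divisor class identity $f^*_{n,3}(9\lambda-\delta_{\irr})=2\psi-2\Delta-\sum_{3\mid k}\Delta_k$. This follows by direct substitution into Proposition \ref{P:pullback} with $p=3$: we have $f^*_{n,3}(\lambda)=\tfrac{3}{12}\bigl((1-\tfrac19)\psi-\sum_k(1-\tfrac{\gcd(k,3)^2}{9})\Delta_k\bigr)=\tfrac{2}{9}\psi-\tfrac14\sum_{3\nmid k}\Delta_k$, and $f^*_{n,3}(\delta_{\irr})=\tfrac13\sum_{3\mid k}9\,\Delta_k=3\sum_{3\mid k}\Delta_k$; combining, $9f^*_{n,3}(\lambda)-f^*_{n,3}(\delta_{\irr})$ collapses (after using $\gcd(k,3)\in\{1,3\}$ and $\psi=\sum_i\psi_i$) to $2\psi-2\Delta-\sum_{3\mid k}\Delta_k$, where $\Delta=\sum_k\Delta_k$. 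This is a routine but necessary bookkeeping step.

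Finally, for extremality I would argue as follows. The Picard rank of $\tM{0,n}$ is $\lfloor n/2\rfloor$, and to show that a nef divisor $D$ spans an extremal ray it suffices to exhibit $\lfloor n/2\rfloor-1$ linearly independent curve classes on which $D$ has degree zero — then $D^\perp$ has codimension $1$ in $N_1$, so $D$ lies on a face of $\Nef$ of dimension $1$. The natural candidates are F-curves $F_{a,b,c,d}$ with $3\mid\gcd$ of appropriate entries, together with the curves $T_k$ of Construction \ref{S:test-curves} for $3\mid k$, on which both $\lambda_{n,3}$ and $f^*_{n,3}(\delta_{\irr})$ — hence $D$ — vanish by Corollary \ref{C:degree-0} and the moduli-invariance of the relevant covers. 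I would check that among the F-curves of type $(a,b,c,d)$ with $3\mid a$ and the $T_k$ with $3\mid k$ one can select $\lfloor n/2\rfloor-1$ classes that are linearly independent in $N_1(\tM{0,n})$; the remaining F-curve directions (those with no entry divisible by $3$) must then give strictly positive degree, which follows from the class formula and the Stankova bound being strict except in the borderline families. The main obstacle I anticipate is precisely this last independence-and-positivity count: one must verify that $D^\perp$ is exactly a line and not a larger face, which requires organizing the F-curves and test curves carefully and confirming that $D$ is positive on enough of them — this is the combinatorial heart of the argument and where the $p=3$ case is special (the analogous claim for general $p$ is only conjectural, as the introduction notes).
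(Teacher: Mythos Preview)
Your overall architecture is right --- compute the class via Proposition~\ref{P:pullback}, prove nefness via Stankova plus a reduction to generically smooth pieces, then prove extremality by exhibiting enough independent curves in $D^\perp$ --- but both the nefness and the extremality steps contain genuine errors.

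\textbf{Nefness.} The inductive reduction is correct in spirit, but you have misidentified the bottom of the induction. After splitting a family of stable $\mu_3$-covers into generically irreducible pieces, the relevant genus is that of the \emph{moving component}, not $n-2$. Over an F-curve $F_{a,b,c,d}$ with none of $a,b,c,d$ divisible by $3$, the moving component is a $\mu_3$-cover of the backbone $\PP^1$ with four totally ramified branch points --- a curve of genus~$2$. Stankova's bound at $g=2$ is $36\cdot 3/11>9$, so Proposition~\ref{L:stankova} does \emph{not} yield $9\lambda-\delta_{\irr}\ge 0$ there. This is not a ``small $n$'' issue; it occurs for every $n$. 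The paper closes this gap by the direct computation that the genus~$2$ family of $\mu_3$-covers with ramification profile $(\tau^2,\tau^2,\tau,\tau)$ has $9\lambda-\delta_{\irr}=0$ exactly, equivalently that $D=2\psi-2\Delta-\sum_{3\mid k}\Delta_k$ vanishes on F-curves congruent to $(2,2,1,1)\bmod 3$ and is strictly positive on all other F-curves. (Incidentally, $36(g+1)/(5g+1)\le 9$ already for $g\ge 3$, not $g\ge 7$; and your intermediate coefficient $\tfrac14$ in the class computation should be $\tfrac{2}{9}$, though the final formula is correct.)

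\textbf{Extremality.} You have chosen the wrong curves. Corollary~\ref{C:degree-0} concerns $\lambda_{n,p}$ alone, not $9\lambda-\delta_{\irr}$. The divisor $D$ is \emph{not} zero on F-curves with $3\mid a$, nor on the test curves $T_k$ with $3\mid k$: for instance $f_{n,3}^*(\delta_{\irr})\cdot T_k=3\Delta_k\cdot T_k=3(2-k)$, so $D\cdot T_k=3(k-2)>0$. The hyperplane $D^\perp$ is spanned by F-curves of type $(2,2,1,1)\bmod 3$ --- precisely the borderline genus~$2$ families above. The paper then exhibits, by explicit case analysis modulo~$12$, a set of $\lfloor n/2\rfloor -2$ such F-curves that are linearly independent in $N_1(\tM{0,n})$; note the Picard number of $\tM{0,n}$ is $\lfloor n/2\rfloor-1$, not $\lfloor n/2\rfloor$.
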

\begin{proof}
\noindent
We compute the divisor class using Proposition \ref{P:pullback} to obtain
$$
f^*_{n,3}(9\lambda-\delta_{\irr})=2(\psi-\Delta+\sum_{3\mid k}\Delta_{k})-3\sum_{3\mid k}\Delta_{k}
=2\psi-2\Delta-\sum_{3\mid k}\Delta_{k}.
$$

\begin{proof}[Proof of nefness:]
Does writing divisor $2\psi-2\Delta-\sum_{3\mid k}\Delta_{k}$ as a pullback of $9\lambda-\delta_{\irr}$ from 
$\M_{n-2}$ help to establish 
its nefness on $\tM{0,n}$? By now even a casual reader will guess that the answer is yes. 
Suppose that a family $\X\ra B$ of stable $\mu_3$-covers is generically reducible. Then we can write
$\X=\X_{1}\cup \X_{2}$, where $\X_{i}\ra B$ are themselves families of stable $\mu_3$-covers, 
and where the union is formed by identifying sections. 
We have then the inequality
$$(9\lambda-\delta_{\irr})_{\X/B}\geq 
 (9\lambda-\delta_{\irr})_{\X_{1}/B}+(9\lambda-\delta_{\irr})_{\X_{2}/B}.$$
Therefore, it suffices to show that $9\lambda-\delta_{\irr}$ is non-negative
on every family of generically smooth cyclic triple covers. 
Clearly, the genus of a $\mu_3$-cover is at least $2$.
If the genus is $3$ or more, then we are done by the inequality 
$
\dfrac{36(g+1)}{5g+1}\lambda-\delta_{\irr}\geq 0
$ of Proposition \ref{L:stankova}.
It remains to treat the genus $2$ case. 
The generic $\mu_3$-cover of genus $2$ is a degree $3$ cover of $\PP^{1}$ branched over $4$ points 
with ramification profile $(\tau^{2},\tau^{2},\tau, \tau)$ where $\tau$ is a $3$-cycle in $S_{3}$. A family 
of genus $2$ stable $\mu_3$-covers is obtained by varying the cross-ratio of $4$ points on $\PP^1$.
But the divisor $f^*_{n, 3}(9\lambda-\delta_{\irr})$ is zero on such a family by an explicit computation, or by 
observing that  $f_{n,3}^*(9\lambda-\delta_{\irr})=2\psi-2\Delta-\sum_{3\mid k}\Delta_{k}$ is zero 
on any F-curve congruent to $(2,2,1,1)$ modulo $3$ (and positive on any other F-curve)!
\end{proof}

\begin{proof}[Proof of extremality:] 
Since the Picard number of $\tM{0,n}$ is $\lfloor n/2\rfloor-1$, a nef divisor 
generates an extremal ray of $\Nef(\tM{0,n})$ 
if it intersects trivially $\lfloor n/2\rfloor-2$ linearly independent effective curves in $N_1(\tM{0,n})$. 
We follow the approach of \cite{agss} and look for 
$\lfloor n/2\rfloor-2$ linearly independent F-curves which $2\psi-2\Delta-\sum_{3\mid k}\Delta_{k}$ intersects
trivially. By above, we have to consider F-curves congruent to $(2,2,1,1)$ modulo $3$.

We treat the case of $n\equiv 0 \mod 12$ in full detail and indicate the necessary modifications
for the remaining cases.

\noindent
{\em Case of $n=12t$:}
It is easy to verify that for $n=12$ the F-curves $F_{5,5,1,1}, F_{4,4,2,2}, F_{1,2,2,7}, F_{1,1,2,8}$ 
are linearly independent. Suppose now $t\geq 2$.
We let 
\begin{multline*}
\N_i=\{F_{6i+1,1,2,n-4-6i}, F_{6i+1,2,2,n-5-6i}, F_{6i+2,1,1,n-4-6i}, \\
F_{6i+5,1,1,n-7-6i}, F_{6i+4,2,2,n-8-6i}, F_{6i+5,1,2,n-8-6i}\} 
\end{multline*}
for $i=0,\dots, t-2$,
and let $\N_{t-1}=\{F_{6t-5,1, 2, 6t+2}, F_{6t-4,1,1, 6t+2}, F_{6t-5,2,2, 6t+1},  F_{6t-1,1,1, 6t-1}\}$.
Then for $1\leq i\leq t-2$, the intersection pairing of $\N_i$ with the divisors 
$\{\Delta_{6i+k}\}_{k=3}^{8}$  
is the following:
\begin{align*}
\begin{array}{l | rrrrrrr} 
   & \Delta_{6i+3} & \Delta_{6i+4} & \Delta_{6i+5} & \Delta_{6i+6} & \Delta_{6i+7} & \Delta_{6i+8} \\
 \hline 
F_{6i+1,1,2,n-4-6i} & 1 & -1 & 0 & 0 & 0 & 0   \\
F_{6i+2,1,1,n-4-6i} & 2 & -1 & 0 & 0 & 0 & 0  \\
F_{6i+1,2,2,n-5-6i} & 2 & 0 & -1 & 0 & 0 & 0   \\
F_{6i+4,2,2,n-8-6i} & 0 & -1 & 0 & 2 & 0 & -1  \\
F_{6i+5,1,1,n-7-6i} & 0 & 0 & -1 & 2 & -1 & 0  \\
F_{6i+5,1,2,n-8-6i} & 0 & 0 & -1 & 1 & 1 & -1  
\end{array} 
\end{align*}
When $i=0$, the matrix is slightly modified:
\begin{align*}
\begin{array}{l| rrrrrrr} 
   & \Delta_{3} & \Delta_{4} & \Delta_{5} & \Delta_{6} & \Delta_{7} & \Delta_{8} \\
 \hline 
F_{1,1,2,n-4} & 2 & -1 & 0 & 0 & 0 & 0   \\
F_{1,2,2,n-5} & 2 & 1 & -1 & 0 & 0 & 0   \\
F_{4,1,2,n-7} & 1 & -1 & 1 & 1 & -1 & 0  \\
F_{5,1,1,n-7} & 0 & 0 & -1 & 2 & -1 & 0  \\
F_{4,2,2,n-8} & 0 & -1 & 0 & 2 & 0 & -1  \\
F_{5,1,2,n-8} & 1 & 0 & -1 & 1 & 1 & -1  
\end{array} 
\end{align*}
Finally, the intersection pairing of $\N_{t-1}$ with $\Delta_{6t-3}$, $\Delta_{6t-2}$, $\Delta_{6t-1}$, $\Delta_{6t}$
is
\begin{align*}
\begin{array}{l | rrrr} 
   & \Delta_{6t-3} & \Delta_{6t-2} & \Delta_{6t-1} & \Delta_{6t}  \\
 \hline 
F_{6t-5,1,2, 6t+2} & 1 & -1 & 0 & 0 \\
F_{6t-4,1,1, 6t+2} & 2 & -1 & 0 & 0   \\
F_{6t-5,2,2,6t+1}  & 2 & 0 & -1 & 0   \\
F_{6t-1,1,1,6t-1}   & 0 & 0 & -2 & 2    \\
\end{array} 
\end{align*}

\noindent
{\em Remaining cases:} 
If $n=12t+3$, we take $$\N_{t-1}=\{F_{6t-5,1,2, 6t+5}, F_{6t-4,1,1, 6t+5},  F_{6t-1,1,1, 6t+2}, F_{6t-4, 1, 2, 6t+4}, F_{6t-1, 1, 2, 6t+1}\}.$$
Then the intersection pairing of $\N_{t-1}$ with $\Delta_{6t-3}$, $\Delta_{6t-2}$, $\Delta_{6t-1}$, $\Delta_{6t}$, $\Delta_{6t+1}$
is nondegenerate.
If $n=12t+6$, we take
\begin{multline*}
\N_{t-1}=\{F_{6t-5,1,2, 6t+8}, F_{6t-4,1,1, 6t+8},  F_{6t-5,2,2, 6t+7}, F_{6t-1, 1, 1, 6t+5}, \\
F_{6t-2, 1, 2, 6t+5}, F_{6t-2,2,2, 6t+4}, F_{6t+1,2,2,6t+1}\}.
\end{multline*}
Then the intersection pairing of $\N_{t-1}$ with $\Delta_{6t-3}$, $\Delta_{6t-2}$, $\Delta_{6t-1}$, $\Delta_{6t}$, 
$\Delta_{6t+1}$, $\Delta_{6t+2}$, $\Delta_{6t+3}$
is nondegenerate.
Finally, if $n=12t+9$, we take
\begin{multline*}
\N_{t-1}=\{F_{6t-5,1,2, 6t+11}, F_{6t-4,1,1, 6t+11},  F_{6t-5,2,2, 6t+10}, F_{6t-1, 1, 1, 6t+8}, \\
F_{6t-2, 1, 2, 6t+8}, F_{6t-2,2,2, 6t+7}, F_{6t+1,1,2,6t+5}, F_{6t+1,2,2,6t+4} \}.
\end{multline*}
Then the intersection pairing of $\N_{t-1}$ with $\Delta_{6t-3}$, $\Delta_{6t-2}$, $\Delta_{6t-1}$, $\Delta_{6t}$, 
$\Delta_{6t+1}$, $\Delta_{6t+2}$, $\Delta_{6t+3}$, $\Delta_{6t+4}$
is nondegenerate.
\renewcommand{\qedsymbol}{}
\end{proof}

\end{proof}

\begin{remark} 
We remark that 
$2\psi-2\Delta-\sum_{3\mid k}\Delta_{k}$ is clearly F-nef: 
It intersects every F-curve non-negatively and has degree $0$ precisely on F-curves congruent to $(2,2,1,1)$ modulo $3$.

The divisor $D:=2\psi-2\Delta-\sum_{3\mid k}\Delta_{k}$ of Theorem \ref{T:3} can be 
rewritten as 
$$2\bigl(K_{\M_{0,n}}+\sum_{3\nmid k}\Delta_{k}+\frac{1}{2}\sum_{3\mid k}\Delta_{k}\bigr).$$
Because $D/2$ is F-nef and of the form 
$K_{\M_{0,n}}+G$ where $\Delta-G\geq 0$, a theorem of Farkas and Gibney \cite[Theorem 4]{FG}
implies that $D$ is nef. Nef and big log canonical divisors are expected to be semiample. Whether this is 
the case is still an open question.

In a similar vein, if we allow ourselves to use the results of higher-dimensional birational geometry, such as the Contraction Theorem, as in \cite{keel-mckernan, FG}, we
obtain the following.
\begin{prop}\label{P:prop-FG}
Suppose $p\mid n$. Then the divisor 
\[
\psi-\Delta-\frac{1}{2}\sum_{p\mid k} \Delta_k=K_{\M_{0,n}}+\sum_{p\nmid k}\Delta_{k}
+\frac{1}{2}\sum_{p\mid k}\Delta_{k}
\]
is nef. 
\end{prop}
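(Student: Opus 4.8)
The plan is to deduce the statement from the theorem of Farkas--Gibney \cite[Theorem 4]{FG} invoked in the preceding Remark, exactly as was done there for $p=3$: that result guarantees that an F-nef divisor which can be written as $K_{\M_{0,n}}+G$ with $G$ an effective boundary $\QQ$-divisor satisfying $\Delta-G\geq 0$ is automatically nef. The first, routine, step is to record the $K+G$ form. Using $K_{\M_{0,n}}=\psi-2\Delta$ one rewrites
\[
\psi-\Delta-\tfrac12\sum_{p\mid k}\Delta_k \;=\; K_{\M_{0,n}}+\Bigl(\sum_{p\nmid k}\Delta_k+\tfrac12\sum_{p\mid k}\Delta_k\Bigr),
\]
so that with $G:=\sum_{p\nmid k}\Delta_k+\tfrac12\sum_{p\mid k}\Delta_k$ all coefficients of $G$ lie in $[0,1]$ and $\Delta-G=\tfrac12\sum_{p\mid k}\Delta_k$ is effective; this is precisely the identity displayed in the statement.

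The heart of the matter is to show that $D:=\psi-\Delta-\tfrac12\sum_{p\mid k}\Delta_k$ is F-nef, i.e. that $D\cdot F_{a,b,c,d}\geq 0$ for every partition $n=a+b+c+d$. The needed intersection numbers are classical: $\psi\cdot F_{a,b,c,d}$ equals the number of indices among $a,b,c,d$ that are equal to $1$, and $\Delta_k\cdot F_{a,b,c,d}$ equals (the number of partitions of the four legs into two pairs having a block of size $k$ or $n-k$) minus (the number of legs of size $k$ or $n-k$). Substituting these, F-nefness becomes a finite collection of elementary inequalities in $(a,b,c,d)$. The delicate F-curves are exactly those on which the pulled-back Hodge class $\lambda_{n,p}$ already vanishes, which by Corollary \ref{C:degree-0} are the $F_{a,b,c,d}$ with $p$ dividing one of $a,b,c,d$; on those the moving components of the family of stable $\mu_p$-covers are the cyclic covers $y^p=x^a(x-1)^b(x-\lambda)^c$, and for $p$ prime Proposition \ref{P:pullback} gives $D\sim f_{n,p}^*\bigl(8p^2\lambda-(p^2-1)\delta_{\irr}\bigr)$, so a slope estimate for the relevant families of $p$-gonal covers (Proposition \ref{L:stankova} when $p=3$, and its analogues for larger $p$) yields non-negativity of $D$ there. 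Granting F-nefness, \cite[Theorem 4]{FG} applies verbatim and gives that $D$ is nef.

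The main obstacle is the F-nefness step. For $p=3$ this was obtained for free in Theorem \ref{T:3}, where $2D$ was even shown to be nef by pulling back a genuine nef class from $\M_{n-2}$ and using Stankova's trigonal slope inequality; for a general $p$ there is no single semiample divisor on a moduli space of curves available for pullback, so the F-nef verification must be assembled from the pullback formula of Proposition \ref{P:pullback}, the Hodge positivity of Section \ref{S:eigen-hodge-classes}, and slope bounds for families of $p$-gonal covers. Determining precisely the F-curves on which $D$ has degree zero, and checking that $D$ does not drop below zero there, is where the real work — and any hypothesis one must impose on $p$ — will lie.
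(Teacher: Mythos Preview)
Your overall plan is the same as the paper's: write the divisor as $K_{\M_{0,n}}+G$ with $0\le G\le\Delta$ and then invoke \cite[Theorem~4]{FG}. The paper's entire proof is two sentences: ``the divisor in question is easily seen to be F-nef'' and then Farkas--Gibney. So the structure is right, but you have misjudged where the content lies.

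The F-nefness is \emph{not} the obstacle; it is a two-line residue check. Since $(\psi-\Delta)\cdot F_{a,b,c,d}=1$ for every F-curve, one only needs
\[
\Bigl(\sum_{p\mid k}\Delta_k\Bigr)\cdot F_{a,b,c,d}
=\#\{\text{pairings with }p\mid a{+}b\}-\#\{i:\ p\mid i,\ i\in\{a,b,c,d\}\}\le 2.
\]
Working with the residues of $a,b,c,d$ modulo $p$ (which sum to $0$), a short case analysis shows that whenever at least one residue is $0$ the left side equals $-1$, and when none is $0$ at most two of the three pairwise sums can vanish (three would force $2a\equiv 0$, impossible for odd $p$). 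That is all. No slope inequalities, no Hodge positivity, no appeal to Proposition~\ref{L:stankova} or hypothetical analogues for larger $p$.

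Your proposed route to F-nefness has two concrete problems. First, you identify the ``delicate'' F-curves as those with $p\mid abcd$, citing Corollary~\ref{C:degree-0}; but the computation above shows these are the \emph{easy} ones ($D\cdot F=3/2$), while the F-curves on which $D$ is actually zero are those with \emph{no} leg divisible by $p$ and two pairwise sums divisible by $p$. So the analysis is backwards. Second, the pullback identity $D\sim f_{n,p}^*(c\lambda-\delta_{\irr})$ you cite holds only for $p$ prime, and even then the slope bound you would need ($\delta_{\irr}\le \tfrac{8p^2}{p^2-1}\lambda$ on the locus of cyclic $p$-covers) is not available in the literature for $p>3$; Proposition~\ref{L:stankova} is specific to trigonal curves. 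Invoking ``its analogues for larger $p$'' is not a proof. The paper explicitly flags this pullback description as a side remark \emph{after} the proposition, not as an input to it.

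In short: keep the $K+G$ reduction and the appeal to Farkas--Gibney, but replace your F-nefness discussion with the elementary residue count above.
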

\begin{proof}
Indeed, the divisor in question is easily seen to be F-nef. A theorem of Farkas and Gibney \cite[Theorem 4]{FG} 
finishes the proof. 
\end{proof}

\noindent
We note that the divisor of Proposition \ref{P:prop-FG} clearly lies on the boundary of 
$\Nef(\tM{0,n})$. However, it does not generate an extremal ray for $p\geq 4$. We also note
that if $p\geq 3$ is prime, then 
$$\psi-\Delta-\frac{1}{2}\sum_{p\mid k} \Delta_k\sim f^*_{n,p}\left(\frac{8p^2}{p^2-1}\lambda-\delta_{\irr}\right).$$
\end{remark}

\subsection{Extensions and examples}

We believe that Theorem \ref{T:3} admits a generalization that produces a manifold of new extremal
rays of $\Nef(\tM{0,n})$ coinciding with extremal rays of the F-cone. We prove one such generalization in the 
case $p=5$ below. Proving that an extremal ray
of the F-cone is actually generated by a nef divisor serves two purposes. On the one hand, it brings
us closer to the proof of the F-conjecture. On the other hand, it delineates the region of the F-cone where 
one should look (or not look) for counterexamples. 

Recall that the Hodge class $\lambda$
gives a measure of variation in moduli for one-parameter 
families of stable curves: As long as the (normalization of the) members
of the family vary nontrivially, the degree of $\lambda$ is positive. A related observation holds for the 
divisor class $\delta_{\irr}$: if the degree of $\delta_{\irr}$ is positive on a one-parameter family of stable curves, then
the variation in moduli in the family is nontrivial and so the degree of $\lambda$ is also positive.
(This property clearly fails for all other boundary divisor classes, as the divisor $\Delta_{a-1}$ and the curve $T_a$ illustrate; see Construction \ref{S:test-curves}).

For every closed subvariety $Z\subset \M_g$, there exists a positive constant $c$ such
that $c\lambda-\delta_{\irr}$ lies on the boundary of $\Nef(Z)$. For $Z=\M_g$, it is well-known that $12\lambda-\delta_{\irr}$ is nef. 
In fact, it generates an extremal ray of $\Nef(\M_g)$; see \cite{faber, GKM}. 
For $Z=\overline{\{\text{cyclic trigonal curves}\}}$,  
Theorem \ref{T:3} shows that $9\lambda-\delta_{\irr}$ is an extremal ray of $\Nef(Z)$.

In a similar vein, there is evidence
that for every prime $p\geq 3$ and $j\in \{1,\dots, p-1\}$, 
the divisor $2p^2\lambda_{n,p}(j)-\delta_\irr$ generates an extremal
ray of $\Nef(\tM{0,n})$. 
For $p=5$, this observation is formalized in the following proposition.
\begin{prop}\label{T:p=5} Suppose $5\mid n$. 
The divisor classes 
\begin{align*}
f^*_{n,5}(50\lambda_{n,5}(1)-\delta_{\irr})
&=4\psi-4\sum_{k\equiv 1,4\hspace{-0.5pc}\mod 5}\Delta_k-6\sum_{k\equiv 2,3\hspace{-0.5pc} \mod 5}\Delta_k-5\sum_{5\mid k}\Delta_k, \\
f^*_{n,5}(50\lambda_{n,5}(2)-\delta_{\irr})
&=6\psi-6\sum_{k\equiv 1,4\hspace{-0.5pc}\mod 5}\Delta_k-4\sum_{k\equiv 2,3\hspace{-0.5pc}\mod 5}\Delta_k-5\sum_{5\mid k}\Delta_k
\end{align*}
are nef on $\M_{0,n}$. 
\end{prop}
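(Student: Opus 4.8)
The plan is to follow the template of the proof of Theorem~\ref{T:3}. The first, routine, step identifies the two divisor classes. Proposition~\ref{P:pullback} gives $f^{*}_{n,5}(\delta_{\irr})=5\sum_{5\mid k}\Delta_{k}$, and Proposition~\ref{P:new-formula}, applied to the weight vector $\vec d=(1,\dots,1)$ with $j=1$ and $j=2$, gives after a short residue computation modulo $5$ (using $\modp{1}{5}\modp{4}{5}=4$, $\modp{2}{5}\modp{3}{5}=6$, and $\modp{k}{5}\modp{-k}{5}\in\{0,4,6\}$ according as $k\equiv 0$, $k\equiv\pm1$, or $k\equiv\pm2\bmod 5$) that $50\lambda_{n,5}(1)=4\psi-4\sum_{k\equiv 1,4}\Delta_{k}-6\sum_{k\equiv 2,3}\Delta_{k}$ and $50\lambda_{n,5}(2)=6\psi-6\sum_{k\equiv 1,4}\Delta_{k}-4\sum_{k\equiv 2,3}\Delta_{k}$; subtracting $f^{*}_{n,5}(\delta_{\irr})$ yields the two displayed formulas.

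The second step records that both $D_{j}:=f^{*}_{n,5}(50\lambda_{n,5}(j)-\delta_{\irr})$ are F-nef. On an F-curve $F$, $D_{j}\cdot F=50\,\lambda_{n,5}(j)\cdot F-5\sum_{5\mid k}\Delta_{k}\cdot F$ with $\lambda_{n,5}(j)\cdot F\ge 0$ by semipositivity of $\EE_{j}$. When $\lambda_{n,5}(j)\cdot F=0$, which by Propositions~\ref{P:determinants-prelim}--\ref{P:determinants-main} happens precisely when the weight-$j$ residues $a\le b\le c\le d$ of the four part-sizes satisfy $a+b+c+d=5$ (or lie in the Case-1 range with $a=0$ and $a+d\le 5$), the mod-$5$ arithmetic forces the number of part-sizes divisible by $5$ to be at least the number of ``balanced'' degenerations of the backbone, so $\sum_{5\mid k}\Delta_{k}\cdot F\le 0$ by the standard F-curve intersection formulas and hence $D_{j}\cdot F\ge 0$; on the remaining F-curves one compares the explicit value $50\,\lambda_{n,5}(j)\cdot F\in\{10a,\,10(5-d)\}$ from Proposition~\ref{P:new-formula} with $5\sum_{5\mid k}\Delta_{k}\cdot F$, exactly as in Theorem~\ref{T:3}.

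For $j=1$ this already gives nefness: one checks directly that
\[
D_{1}=4\Bigl(K_{\M_{0,n}}+\sum_{k\equiv 1,4}\Delta_{k}+\tfrac12\sum_{k\equiv 2,3}\Delta_{k}+\tfrac34\sum_{5\mid k}\Delta_{k}\Bigr),
\]
and since the boundary $\QQ$-divisor $\Gamma_{1}$ in parentheses satisfies $0\le\Gamma_{1}\le\Delta$, the theorem of Farkas and Gibney \cite[Theorem~4]{FG} applies verbatim as in the Remark after Theorem~\ref{T:3}. For $j=2$ the analogous presentation is $D_{2}=6(K_{\M_{0,n}}+\Gamma_{2})$ with $\Gamma_{2}=\sum_{k\equiv 1,4}\Delta_{k}+\tfrac43\sum_{k\equiv 2,3}\Delta_{k}+\tfrac76\sum_{5\mid k}\Delta_{k}$, whose coefficients $\tfrac43$ and $\tfrac76$ exceed $1$, so \cite[Theorem~4]{FG} no longer applies.

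Thus for $j=2$ I would argue geometrically, as in the proof of nefness of Theorem~\ref{T:3}. If a family $\X\to B$ of stable $\mu_{5}$-covers is generically reducible, write $\X=\X_{1}\cup\X_{2}$ by gluing along sections; since $\det\EE_{2}$ is additive and $\delta_{\irr}$ is at most additive under such gluings, $50\lambda_{n,5}(2)-\delta_{\irr}$ is super-additive, which reduces the problem to generically smooth families. A generically smooth family whose generic member is a cyclic $\mu_{5}$-cover branched at at most four points is, up to a positive multiple, the class of an F-curve, so $D_{2}\cdot B\ge 0$ by F-nefness; all other generically smooth families have generic member a smooth cyclic $\mu_{5}$-cover of $\PP^{1}$ with at least five branch points, hence of genus $g\ge 6$. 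The crux is then the slope-type inequality $50\deg(\det\EE_{2})\ge\deg\delta_{\irr}$ on such families, and this is the main obstacle: it is an eigenbundle refinement of Stankova's estimate (Proposition~\ref{L:stankova}), which I would try to prove either by adapting Stankova's argument --- or a Cornalba--Harris/Xiao-type bound --- to the Hurwitz locus of cyclic $\mu_{5}$-covers, or by combining an Arakelov-type positivity inequality for the weight-$(\pm2)$ sub-variation of Hodge structure $\EE_{2}\oplus\EE_{3}$ with the local analysis at the $\delta_{\irr}$-nodes carried out in the proof of Proposition~\ref{P:pullback}, where each such node contributes a single rank-one unipotent block to $\EE_{2}$. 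No extremality statement is claimed, so nothing further is needed.
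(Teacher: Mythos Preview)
Your treatment of $j=1$ via the Farkas--Gibney criterion is correct, though it is not the route the paper takes. The real problem is $j=2$: you correctly note that the log-canonical presentation fails there, you reduce to generically smooth families of $\mu_5$-covers, but then you leave the slope inequality $50\deg(\det\EE_2)\ge\deg\delta_{\irr}$ as an unproven ``crux,'' offering only that one might adapt Stankova or invoke an Arakelov-type argument. This is a genuine gap: neither suggestion is carried out, and there is no off-the-shelf eigenbundle refinement of Stankova's trigonal bound that one can quote for degree-$5$ cyclic covers.

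The paper avoids this entirely by an elementary trick that you are missing. After reducing (via superadditivity, as you do) to generically smooth families, it observes that the normalization of a reducible $n$-pointed family produces an $m$-pointed family with \emph{weighted} sections $d_i\in\{1,2,3,4\}$, and that in these weighted coordinates both divisors $D_1$ and $D_2$ are incarnations of the \emph{same} class
\[
Q=4(\Psi_1+\Psi_4)+6(\Psi_2+\Psi_3)-4(D_1+D_4)-6(D_2+D_3)-5D_5.
\]
On a generically smooth stable $m$-pointed family one has $\Psi_i\ge 0$ and $D_k\ge 0$, so $Q\ge 4\psi-6\Delta$. For $m\ge 5$ the identity $(m-1)\psi=\sum_k k(m-k)\Delta_k$ on $\M_{0,m}$ gives $4\psi-6\Delta\ge 0$ immediately. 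The $m=4$ case is exactly the F-curve check. No slope inequality for $\mu_5$-covers is needed at all, and both $j=1$ and $j=2$ fall out at once.
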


\begin{remark}
It is almost certain that both of these divisors in fact generate an extremal ray of $\Nef(\tM{0,n})$ (for $5\mid n$).
This is verified for $n=10$ below and can be verified by hand for $n=15$. 
We omit a verification in the general case, which, in the absence of 
any further insight, would be a tedious exercise in linear algebra. 
\end{remark}

\begin{remark} It is not hard to see that the divisor 
$$f^*_{n,5}(50\lambda_{n,5}(2)-\delta_{\irr})
=6\psi-6\sum_{k\equiv 1,4\hspace{-0.5pc}\mod 5}\Delta_k-4\sum_{k\equiv 2,3\hspace{-0.5pc}\mod 5}\Delta_k-5\sum_{5\mid k}\Delta_k$$
is not log canonical already for $n\geq 25$.
 Therefore, the techniques of \cite{keel-mckernan, FG} cannot be used to establish its nefness for $n\geq 25$.
This divisor also does not appear to be a conformal blocks divisor. 
\end{remark}

\begin{proof}
First, we verify that $f^*_{n,5}(50\lambda_{n,5}(j)-\delta_{\irr})$ has non-negative degree on every F-curve
by considering all possible F-curves modulo $5$: An F-curve of type with $(a,b,c,d)$ with $5\mid abcd$ is
easily seen to intersect the divisors in question positively. An F-curve of type $(1,4,1,4)$ modulo $5$ intersects
$f^*_{n,5}(50\lambda_{n,5}(1)-\delta_{\irr})$ in degree $0$, and $f^*_{n,p}(50\lambda_{n,5}(2)-\delta_{\irr})$ in 
degree $10$. An F-curve of type $(2,3,2,3)$ modulo $5$ intersects
$f^*_{n,5}(50\lambda_{n,5}(1)-\delta_{\irr})$ in degree $10$, and $f^*_{n,p}(50\lambda_{n,5}(2)-\delta_{\irr})$ in 
degree $0$. All remaining F-curves intersect divisors in question positively.

From now on, the proof parallels that of Theorem \ref{T:3} above.
We begin by observing that
classes $50\lambda_{n,5}(j)-\delta_{\irr}$ are superadditive under the operation of normalization 
along generic nodes. Therefore, it suffices to treat the 
case of a generically smooth family.   
We consider a family $(\X\ra B; \{\sigma_i\}_{i=1}^m)$ with sections 
$\{\sigma_i\}_{i=1}^m$ endowed with weights $d_i\in \{1,2,3,4\}$. (This indicates that a weighted cyclic covering morphism is lurking in the background.)
For $k\in \{1,2,3,4\}$, 
let $D_k:=\sum_{I,J}\Delta_{I,J}$, where the sum is taken over partitions $I\cup J=\{1,\dots,m\}$ such that 
$\sum_{i\in I} d_i=k \mod 5$. We also set $\Psi_k=\sum_{i:\, d_i=k} \psi_i$. 
Then in view of Proposition \ref{P:new-formula}, the two divisors $f^*_{n,5}(50\lambda_{n,5}(j)-\delta_{\irr})$, for $j=1$ and $j=2$, become incarnations of
the same divisor on the moduli stack of pointed curves with weights. Namely, we have the divisor
\[
Q:=4(\Psi_1+\Psi_4)+6(\Psi_2+\Psi_3)-4(D_1+D_4)-6(D_2+D_3)-5D_5.
\]
For a generically smooth stable $m$-pointed family of rational curves with $m\geq 5$, 
we always have the inequality  
$4\psi-6\Delta\geq 0$, which follows immediately from the identity $(m-1)\psi=\sum_k k(m-k)\Delta_k$ on $\M_{0,m}$. 
It follows that $Q$ is non-negative on such families. 
The generically smooth stable $4$-pointed families of rational curves are precisely 
the F-curves, and $Q$ is non-negative on such families by the inspection above. The proposition follows. 

\end{proof}

We proceed to show that the divisors obtained from the 
cyclic covering morphisms span all extremal rays of 
$\Nef(\tM{0,n})$ for $n=9$ and $n=10$.
All computations with convex polytopes were done using {\tt lrs} \cite{lrs}. 
\subsubsection{Nef cone of $\tM{0,9}$} Let $\vec{d}=(1,1,1,1,1,1,1,1,2)$.
The extremal rays of $\Nef(\tM{0,9})$ are as follows:
\begin{align*}
D_1 &=\Delta_2+3\Delta_3+6\Delta_4 \sim (f^{S_9}_{\vec{d},2})^* (10\lambda-\delta_{\irr}-2\delta_{\red}),  \\
D_2 &=3\Delta_2+3\Delta_3+4\Delta_4 \sim (f^{S_9}_{\vec{d},2})^* (\lambda), \\
D_3 &=\Delta_2+3\Delta_3+2\Delta_4 \sim (f_{9,3})^* (\lambda), \\
D_4 &=\Delta_2+\Delta_3+2\Delta_4 \sim (f_{9,3})^* (9\lambda-\delta_{\irr}).
\end{align*}
We note that the divisor class $10\lambda-\delta_{\irr}-2\delta_{\red}$ generates an extremal 
ray of $\Nef(\M_g)$ for every $g\geq 2$ by \cite{GKM}.
\subsubsection{Nef cone of $\tM{0,10}$}
The F-curves on $\tM{0,10}$ and their coordinates in the standard basis $\Delta_2, \Delta_3, \Delta_4, \Delta_5$ 
are as follows:
\begin{table}[h] 
\renewcommand{\arraystretch}{1.1}
\begin{tabular}{lll}
$C_{1} =F_{7,1,1,1}=(3, -1, 0, 0)$  & 
$C_{2} =F_{6,2,1,1} =(0,2,-1,0)$ &
$C_{3} =F_{5,3,1,1} =(1,-1,2,-1)$ \\
$C_{4} =F_{5,2,2,1} =(-2,2,1,-1)$ &
$C_{5} =F_{4,4,1,1} =(1,0,-2,2)$ &
$C_{6} =F_{4,3,2,1} =(-1,0,0,1)$ \\
$C_{7} =F_{4,2,2,2} =(-3,0,2,0)$ &
$C_{8} =F_{3,3,3,1} =(0,-3,3,0)$ &
$C_{9} =F_{3,3,2,2} =(-2,-2,1,2)$
\end{tabular}
\end{table}

Using {\tt lrs}, we compute the extremal rays of the F-cone. 
Using cyclic covering morphisms, we prove that every extremal ray is generated by nef divisors.
The results are listed in Table \ref{table-10}.
We refer to \cite{agss, ags} for background
on $\sl_n$ and $\sl_2$ conformal blocks divisors on $\M_{0,n}$.

\begin{table}[h] 
\label{table-10}
\renewcommand{\arraystretch}{1.6}
\begin{tabular}{|l | l | l | l | }
\hline
\multirow{2}{2in}{{\small Extremal ray of $\Nef(\tM{0,10})$}} & \multirow{2}{1in}{{\small Orthogonal F-curves}} & \multirow{2}{1in}{
{\small Cyclic covering interpretation}} & 
\multirow{2}{0.9in}{{\small Conformal blocks interpretation}}   \\
& & &  \\
\hline
$4\Delta_{2}+6\Delta_{3}+6\Delta_{4}+7\Delta_{10}$ & $C_{7}, C_{8}, C_{9}$ &
 $50\lambda_{10,5}(2)-\delta_{\irr}$ (Prop. \ref{T:p=5}) & N/A \\
 \hline
 $2\Delta_{2}+6\Delta_{3}+6\Delta_{4}+5\Delta_{5}$ & $C_{1}, C_{5}, C_{8}$ &
 $(f_{(0,1,\dots,1),3}^{S_9})^*\lambda\sim \lambda_{10,10}(3)$ &  $\DD(\sl_{10},1,w_{3}^{10})$  \\
 \hline
 $4\Delta_{2}+3\Delta_{3}+6\Delta_{4}+4\Delta_{5}$ &  $C_{2},C_{4}, C_{5},C_{6},C_{7}, C_{9}$ &
 $f_{10,2}^{*}\lambda=\lambda_{10,2}(1)\sim \lambda_{10,10}(5)$ &
 \multirow{2}{0.9in}{$\DD(\sl_{10},1, w_{5}^{10})$ $\sim \DD(\sl_{2},k,k^{10})$} \\
 & & & \\
 \hline
 $2\Delta_{2}+6\Delta_{3}+12\Delta_{4}+11\Delta_{5}$ & $C_{1}, C_{2}, C_{5}$ & 
\multirow{2}{1.5in}{$f^{*}_{10,2}(10\lambda-\delta_{\irr}-2\delta_{\red})$ $\sim 50\lambda_{10,5}(1)-\delta_{\irr}$} & $\DD(\sl_{2},3,1^{10})$  \\ 
& & & \\
\hline
 $2\Delta_{2}+3\Delta_{3}+3\Delta_{4}+5\Delta_{5}$ & $C_{3}, C_{4}, C_{7}, C_{8}$ &
   $\lambda_{10,5}(2)\sim \lambda_{10,10}(4)$ & $\DD(\sl_{10},1,w_{4}^{10})$\\
   \hline
    $\Delta_{2}+3\Delta_{3}+3\Delta_{4}+4\Delta_{5}$ & $C_{1}, C_{3}, C_{8}$ &
    $f^{*}_{10,2}(12\lambda-\delta_{0})$ & $\DD(\sl_{2},2,1^{10})$ \\
    \hline
  $\Delta_{2}+3\Delta_{3}+6\Delta_{4}+10\Delta_{5}$ & $C_{1}, C_{2}, C_{3}, C_{4}$ &
 $\lambda_{10,5}(1)\sim \lambda_{10,10}(2)$ &  $\DD(\sl_{10},1,w_{2}^{10})$ \\
 \hline
\end{tabular}
\smallskip
\caption{Extremal rays of $\Nef(\tM{0,10})$ via the cyclic covering morphisms.}
\label{T:table-dangling}
\end{table}

\section{Computing degrees of eigenbundles $\EE_j$}
\label{S:appendix}
In this section, we collect main technical results concerning the eigenbundles $\EE_j$ 
defined in Section \ref{S:eigen-hodge-classes}. These results are well-known. 
The eigenbundle decomposition of 
the Hodge bundle 
over a family of cyclic covers of $\PP^1$ with $4$ branch points has been considered in 
\cite[Section 3]{bouw-moller} and 
\cite{eskin-kontsevich-zorich}. 

We have decided to include these computations for two reasons. One reason is completeness: with this 
section the paper becomes essentially self-contained. The second reason is that we work exclusively in the algebraic
category, and so all of the results continue to hold in sufficiently high positive characteristic.

\subsection{Weight $j$ forms on $\mu_p$-covers of $\PP^1$ with $3$ branch points}

\begin{definition}[Branched covers of a $3$-pointed $\PP^1$]\label{D:3-branched}
We define $C(a,b)$ to be the normalization of the curve defined by the equation
$
y^p=x^a(x-1)^b.
$
The resulting branched cover\footnote{
No divisibility conditions on $a$ and $b$ are imposed; in particular, $C(a,b)$ can be disconnected.} 
$\pi\co C(a,b) \ra \PP^1$ has branch points at $0$, $1$, and $\infty$. 
Set $c=\modp{p-a-b}{p}$. 
We consider the reduced divisors $[0]:=\pi^{-1}(0)$, $[1]:=\pi^{-1}(1)$, and $[\infty]:=\pi^{-1}(\infty)$.
Evidently, $\deg [0]=\gcd(a,p)$, $\deg [1]=\gcd(b,p)$, and $\deg [\infty]=\gcd(c,p)$. Note
that by symmetry $C(a,b)=C(a,c)=C(b,c)$.
\end{definition}

\begin{lemma}\label{L:weight-j-3-branched}
Let $C=C(a,b)$ be as in Definition \ref{D:3-branched}.  
The weight spaces of $\HH^0(C,\omega_C)$ with respect to the $\mu_p$-action are computed as follows. Consider the unique 
integers $k$ and $\ell$ satisfying
\begin{align*}
aj-\gcd(a,p) &=kp+\modp{aj-\gcd(a,p)}{p}, \\
bj-\gcd(b,p) &=\ell p+\modp{bj-\gcd(b,p)}{p},
\end{align*}
and define 
$\omega:=y^jdx/x^{k+1}(x-1)^{\ell+1}.$
\begin{enumerate}
\item[(a)] If $\modp{aj}{p}=0$ or $\modp{bj}{p}=0$, then $\HH^0(C,\omega_C)_j=(0)$.
\item[(b)] If $\modp{aj}{p}+\modp{bj}{p}\geq p$, then $\HH^0(C,\omega_C)_j=(0)$.
\item[(c)] If $\modp{aj}{p}\modp{bj}{p}>0$ and $\modp{aj}{p}+\modp{bj}{p}\leq p-1$, \\ then $\HH^0(C,\omega_C)_j=\spn\{\omega\}$.
\end{enumerate}
\end{lemma}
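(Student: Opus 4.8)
The plan is to exploit the explicit function field of $C:=C(a,b)$. Since $1,y,\dots,y^{p-1}$ is a $\CC(x)$-basis of $\CC(C)$ on which $\mu_p$ acts by distinct characters, and $dx$ is $\mu_p$-invariant, every weight-$j$ rational $1$-form on $C$ is $h(x)\,y^j\,dx$ for a unique $h\in\CC(x)$; hence $\HH^0(C,\omega_{C})_j$ is exactly the set of such $h$ for which this form is regular everywhere. (All of this is insensitive to whether $C$ is connected, because the ramification index over each of $0,1,\infty$ is the same on every branch.) So the whole proof is the computation of $\div_C(h\,y^j\,dx)$.

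First I would record the divisors of the basic rational functions and forms. Writing $d_0=\gcd(a,p)$, $d_1=\gcd(b,p)$, $d_\infty=\gcd(c,p)$ and $e_\bullet=p/d_\bullet$, the local structure of the cover together with Riemann--Hurwitz (tame in characteristic $0$) gives
\[
\div(x)=e_0[0]-e_\infty[\infty],\qquad \div(y)=\tfrac{a}{d_0}[0]+\tfrac{b}{d_1}[1]-\tfrac{a+b}{d_\infty}[\infty],
\]
\[
\div(dx)=(e_0-1)[0]+(e_1-1)[1]-(e_\infty+1)[\infty],
\]
and $\div(x-1)=e_1[1]-e_\infty[\infty]$. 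At a point over $x_0\notin\{0,1,\infty\}$ regularity of $h\,y^j\,dx$ just forces $\ord_{x_0}h\geq 0$, so $h$ has poles only at $0$ and $1$. Imposing $\ord_P(h\,y^j\,dx)\geq 0$ at points over $0,1,\infty$ and dividing through by the $e_\bullet$, one finds this is equivalent to: $h$ has a pole of order $\leq k+1$ at $0$, a pole of order $\leq\ell+1$ at $1$, and a zero of order $\geq M:=1+\bigl\lceil(j(a+b)+d_\infty)/p\bigr\rceil$ at $\infty$ — with $k,\ell$ exactly the integers of the statement, the key bookkeeping identity being $\bigl\lceil(d_0-aj)/p\bigr\rceil=-k$, which relies on $d_0\mid\modp{aj}{p}$. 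Thus $h\mapsto h\,y^j\,dx$ identifies $\HH^0(C,\omega_{C})_j$ with $\bigl\{x^{-k-1}(x-1)^{-\ell-1}q(x):q\in\CC[x],\ \deg q\leq k+\ell+2-M\bigr\}$, of dimension $\max(0,\,k+\ell+3-M)$.

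It then remains to identify this number with the combinatorics of the statement. Set $\alpha=\modp{aj}{p}$, $\beta=\modp{bj}{p}$, $\gamma=\modp{cj}{p}$ and write $a+b+c=pN$, so that $\alpha+\beta+\gamma=\epsilon p$ with $\epsilon\in\{0,1,2\}$. Using $d_0\mid\alpha$, $d_1\mid\beta$, $d_\infty\mid\gamma$, the elementary fact that $k=\lfloor aj/p\rfloor$ when $\alpha\geq 1$ and $k=\lfloor aj/p\rfloor-1$ when $\alpha=0$ (and likewise for $\ell$ and for $\lfloor(cj-d_\infty)/p\rfloor$), and rewriting $j(a+b)=pjN-cj$, a short calculation reduces $k+\ell+3-M$ to $2-\epsilon-z$, where $z\in\{0,1,3\}$ is the number of zeros among $\alpha,\beta,\gamma$. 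Now one checks: in case (a) ($\alpha=0$ or $\beta=0$) one has $z\geq 1$ and in fact $\epsilon+z\geq 2$, so the dimension is $0$; in case (b) ($\alpha,\beta\geq 1$, $\alpha+\beta\geq p$) one has either $\epsilon=2,\,z=0$ or $\epsilon=1,\,z=1$, so again the dimension is $0$; in case (c) ($\alpha,\beta\geq 1$, $\alpha+\beta\leq p-1$) necessarily $\alpha+\beta+\gamma=p$ with $\gamma\geq 1$, so $\epsilon=1$, $z=0$, and the dimension is $1$. Finally, in case (c), substituting $\omega=y^j\,dx/(x^{k+1}(x-1)^{\ell+1})$ into the divisor formulas above yields $\ord_{[0]}\omega=\alpha/d_0-1$, $\ord_{[1]}\omega=\beta/d_1-1$, $\ord_{[\infty]}\omega=\gamma/d_\infty-1$, each a nonnegative integer since $\alpha,\beta,\gamma$ are positive multiples of $d_0,d_1,d_\infty$; so $\omega$ is a nonzero regular weight-$j$ form, and by the dimension count it spans $\HH^0(C,\omega_{C})_j$.

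The main obstacle is the bookkeeping of the last two steps: handling the ``invisible'' branch point at $\infty$, where $x^a(x-1)^b$ has a pole of order $\equiv-(a+b)\pmod p$; keeping the ramification indices $e_\bullet$ straight when clearing denominators; and tracking the floor/ceiling terms in $aj,bj,cj$ — in particular the divisibility $\gcd(a,p)\mid\modp{aj}{p}$ and the degenerate cases where some $\gcd(\,\cdot\,,p)$ equals $p$, i.e. where a nominal branch point is actually unramified. Everything else is routine.
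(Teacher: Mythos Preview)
Your proof is correct and follows essentially the same approach as the paper: both compute the divisors of $x$, $x-1$, $y$, and $dx$ on $C$ and determine which rational functions $h(x)$ make $h(x)\,y^j\,dx$ regular. The only difference is organizational---the paper singles out the candidate generator $\omega$ and checks its order at $[\infty]$ case by case, whereas you package the regularity conditions as a linear system on $\PP^1$ and compute its dimension uniformly as $\max(0,\,2-\epsilon-z)$; this is a tidy repackaging of the same computation rather than a different argument.
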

\begin{proof} 
Every rational weight $j$ form on $C$ looks like $y^jdx/g(x)$.
We begin by writing down the relevant divisors on $C(a,b)$: 
\begin{align*}
(y) &=\frac{a}{\gcd(a,p)}[0]+\frac{b}{\gcd(b,p)}[1]-\left(\frac{a}{\gcd(a,p)}+\frac{b}{\gcd(b,p)}\right)[\infty], \\
(dx) &=\frac{p-\gcd(a,p)}{\gcd(a,p)} [0]+\frac{p-\gcd(b,p)}{\gcd(b,p)}[1]-\left(\frac{p}{\gcd(c,p)}+1\right)[\infty], \\
(x) &=\frac{p}{\gcd(a,p)}[0]-\frac{p}{\gcd(a,p)}[\infty], \\
(x-1) &=\frac{p}{\gcd(b,p)}[1]-\frac{p}{\gcd(b,p)}[\infty].
\end{align*}
\noindent
The key observation is that $\omega=y^jdx/x^{k+1}(x-1)^{\ell+1}$ is a rational form of weight $j$
which is regular on $\PP^1\smallsetminus \infty$ and has 
the least possible orders of vanishing along $[0]$ and $[1]$. Namely,
$\gcd(a,p)\ord_{0}(\omega)=\modp{aj-\gcd(a,p)}{p}$ and $\gcd(b,p)\ord_{1}(\omega)=\modp{bj-\gcd(b,p)}{p}$. 
Note that $\deg \omega=p-\gcd(a,p)-\gcd(b,p)-\gcd(c,p)$. 
If $\modp{aj}{p}=0$ or $\modp{bj}{p}=0$, then we see immediately that 
$\ord_{\infty}(\omega)<0$. It follows that $\HH^0(C,\omega_C)_j=(0)$. \\
If $\modp{aj}{p}+\modp{bj}{p}\geq p$, then 
$\modp{aj-\gcd(a,p)}{p}+\modp{bj-\gcd(b,p)}{p}\geq p-\gcd(a,p)-\gcd(b,p)$. Thus 
$\ord_{\infty}(\omega)<0$ and so $\HH^0(C,\omega_C)_j=(0)$.

Finally, if $\modp{aj}{p}\modp{bj}{p}>0$ and $\modp{aj}{p}+\modp{bj}{p}\leq p-1$, then we have
\begin{multline*}
\gcd(c,p)\ord_{\infty}(\omega)=p-\gcd(a,p)-\gcd(b,p)-\gcd(c,p) \\
-\modp{aj-\gcd(a,p)}{p}-\modp{bj-\gcd(b,p)}{p}\geq 1-\gcd(c,p).
\end{multline*}
Since $\gcd(c,p)\ord_{\infty}(\omega)$ is divisible by $\gcd(c,p)$, it follows that
$p-1\geq \gcd(c,p)\ord_{\infty}(\omega)\geq 0$. We conclude that $\omega$ 
is a unique (up to scaling) regular form of weight $j$.
\end{proof}

\begin{remark} In the situation of Lemma \ref{L:weight-j-3-branched} (c), we have 
$k=\lfloor aj/p\rfloor$ and $\ell=\lfloor bj/p\rfloor$.
\end{remark}

 \subsection{Weight $j$ forms on $\mu_p$-covers of $\PP^1$ with $4$ branch points}

\begin{definition}[Branched covers of a $4$-pointed $\PP^1$]\label{D:4-branched} Suppose $\lambda\neq 0,1,\infty$.
We define $C(a,b,c)$ to be the normalization of the curve defined by the equation
$$
y^p=x^a(x-1)^b(x-\lambda)^c.
$$
We consider the resulting branched cover\footnote{
No divisibility conditions on $a$, $b$, and $c$ are imposed; in particular, $C(a,b,c)$ can be disconnected.} 
 $\pi\co C(a,b,c) \ra \PP^1$ with branch points $0$, $1$, $\lambda$, $\infty$. 
Set $d=\modp{p-a-b-c}{p}$. 
We consider the reduced divisors $[0]:=\pi^{-1}(0)$, $[1]:=\pi^{-1}(1)$, $[\lambda]=\pi^{-1}(\lambda)$, and
$[\infty]:=\pi^{-1}(\infty)$.
Evidently, $\deg [0]=\gcd(a,p)$, $\deg [1]=\gcd(b,p)$, $\deg(\lambda)=\gcd(c,p)$, 
and $\deg [\infty]=\gcd(d,p)$. 
Note
that by symmetry $C(a,b,c)=C(a,c,d)=C(a,b,d)=C(b,c,d)$.
\end{definition}

\begin{lemma}\label{L:weight-j-4-branched}
Let $C=C(a,b,c)$ be as in Definition \ref{D:4-branched}.  
The weight spaces of $\HH^0(C,\omega_C)$ with respect to the $\mu_p$-action are as follows. 
Consider the unique 
integers $k$, $\ell$, and $m$ satisfying
\begin{align*}
aj-\gcd(a,p) &=kp+\modp{aj-\gcd(a,p)}{p}, \\
bj-\gcd(b,p) &=\ell p+\modp{bj-\gcd(b,p)}{p}, \\
cj-\gcd(c,p) &=m p+\modp{cj-\gcd(c,p)}{p},
\end{align*}
and define 
$
\omega:=y^jdx/x^{k+1}(x-1)^{\ell+1}(x-\lambda)^{m+1}.
$
If $\modp{aj}{p}\modp{bj}{p}\modp{cj}{p}=0$, then $\HH^0(C,\omega_C)_j=(0)$. In the  
case $\modp{aj}{p}\modp{bj}{p}\modp{cj}{p}>0$, we have
\begin{enumerate}
\item[(a)] If $\modp{aj}{p}+\modp{bj}{p}+\modp{cj}{p}\geq 2p$, then $\HH^0(C,\omega_C)_j=(0)$.
\item[(b)] If $p\leq \modp{aj}{p}+\modp{bj}{p}+\modp{cj}{p}\leq 2p-1$, then $\HH^0(C,\omega_C)_j=\spn\{\omega\}$.
\item[(c)] If $\modp{aj}{p}+\modp{bj}{p}+\modp{cj}{p}\leq p-1$, then $\HH^0(C,\omega_C)=\spn\{x\omega, (x-1)\omega\}$.
\end{enumerate}
\end{lemma}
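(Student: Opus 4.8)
The plan is to run the proof of Lemma~\ref{L:weight-j-3-branched} with one extra branch point, reusing its notation. Write $\bar a=\gcd(a,p)$, $\bar b=\gcd(b,p)$, $\bar c=\gcd(c,p)$ and $g_\infty=\gcd(a+b+c,p)=\gcd(d,p)$, so that the ramification indices of $\pi\co C\to\PP^1$ over $0,1,\lambda,\infty$ are $p/\bar a$, $p/\bar b$, $p/\bar c$, $p/g_\infty$. First I would write down the divisors of $y$, $dx$, $x$, $x-1$, $x-\lambda$ on $C(a,b,c)$ in terms of $[0],[1],[\lambda],[\infty]$, exactly as in the proof of Lemma~\ref{L:weight-j-3-branched} but carrying the extra symbol $[\lambda]$; in particular each of $x$, $x-1$, $x-\lambda$ has a pole of order $p/g_\infty$ along $[\infty]$. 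Every weight-$j$ rational $1$-form is $f(x)\,y^j\,dx$, and since $\pi$ is \'etale away from $\{0,1,\lambda,\infty\}$, such a form is regular there iff $f$ is a rational function of $x$ with poles confined to $\{0,1,\lambda,\infty\}$.

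Next I would single out $\omega=y^j\,dx/x^{k+1}(x-1)^{\ell+1}(x-\lambda)^{m+1}$. By the choice of $k,\ell,m$ (equivalently $k=\lfloor aj/p\rfloor$, $\ell=\lfloor bj/p\rfloor$, $m=\lfloor cj/p\rfloor$ when $\modp{aj}{p}\modp{bj}{p}\modp{cj}{p}>0$), the form $\omega$ has the least orders along $[0]$, $[1]$, $[\lambda]$ permitted by the $\mu_p$-action, namely $\bar a\,\ord_{[0]}(\omega)=\modp{aj}{p}-\bar a$ and similarly at $[1]$, $[\lambda]$. Minimality at these three fibres, together with the \'etale remark, forces every regular weight-$j$ form to be $f\omega$ with $f\in\CC[x]$: a pole of $f$ at $0$, $1$ or $\lambda$ would push the order of $f\omega$ below that of $\omega$ there, and a pole elsewhere on $\AA^1$ is forbidden. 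So it remains to compute $\ord_{[\infty]}(\omega)$ and to decide which polynomial multiples $f\omega$ stay regular at $[\infty]$, where $\ord_{[\infty]}(f\omega)=\ord_{[\infty]}(\omega)-(\deg f)\,p/g_\infty$. Using $kp+\ell p+mp=j(a+b+c)-(\modp{aj}{p}+\modp{bj}{p}+\modp{cj}{p})$ and the divisor of $\omega$ at $[\infty]$, one obtains the clean identity
\[
g_\infty\bigl(\ord_{[\infty]}(\omega)+1\bigr)=2p-\modp{aj}{p}-\modp{bj}{p}-\modp{cj}{p}.
\]

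Then I would conclude by case analysis, the crucial arithmetic input being that $\modp{aj}{p}+\modp{bj}{p}+\modp{cj}{p}\equiv j(a+b+c)\equiv -jd\pmod p$ is divisible by $g_\infty=\gcd(a+b+c,p)$, and that $p$ is too. In case (a), $\modp{aj}{p}+\modp{bj}{p}+\modp{cj}{p}\ge 2p$ makes the right-hand side $\le 0$, so $\ord_{[\infty]}(\omega)<0$ and, as multiplying by $x,x-1,x-\lambda$ only lowers the order at $[\infty]$, no regular weight-$j$ form exists. In case (b), $p\le\modp{aj}{p}+\modp{bj}{p}+\modp{cj}{p}\le 2p-1$ forces the right-hand side into $[g_\infty,p]$, so $0\le\ord_{[\infty]}(\omega)\le p/g_\infty-1$, whence $\omega$ is regular but $x\omega$ is not: $\HH^0(C,\omega_C)_j=\CC\omega$. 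In case (c), $\modp{aj}{p}+\modp{bj}{p}+\modp{cj}{p}\le p-1$ puts the right-hand side in $[p+g_\infty,2p-g_\infty]$ (using divisibility by $g_\infty$), so $p/g_\infty\le\ord_{[\infty]}(\omega)\le 2p/g_\infty-2$; then $x\omega$ and $(x-1)\omega$ remain regular at $[\infty]$ while $x^2\omega$ does not, so $\HH^0(C,\omega_C)_j$ is the two-dimensional space $\spn\{x\omega,(x-1)\omega\}$ (note $\omega=x\omega-(x-1)\omega$). Finally, the vanishing when $\modp{aj}{p}\modp{bj}{p}\modp{cj}{p}=0$ follows by a direct inspection of $\ord_{[\infty]}$ as in Lemma~\ref{L:weight-j-3-branched}(a), or by observing that in that situation the weight-$j$ forms descend to a cyclic cover of $\PP^1$ with fewer branch points, where the statement reduces to Lemma~\ref{L:weight-j-3-branched}. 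The step requiring the most care is extracting the \emph{exact} value of $\ord_{[\infty]}(\omega)$, not merely an inequality: this is precisely where the divisibility of $\modp{aj}{p}+\modp{bj}{p}+\modp{cj}{p}$ by $\gcd(a+b+c,p)$ is used, and it is also what pins the dimension in case (c) to exactly $2$.
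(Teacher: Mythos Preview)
Your proposal is correct and follows essentially the same strategy as the paper: compute the divisors of $y$, $dx$, $x$, $x-1$, $x-\lambda$, identify $\omega$ as the weight-$j$ form that is regular on $\pi^{-1}(\AA^1)$ with minimal vanishing orders along $[0]$, $[1]$, $[\lambda]$, and then analyze $\ord_{[\infty]}(\omega)$ case by case. Your explicit identity $g_\infty(\ord_{[\infty]}(\omega)+1)=2p-\modp{aj}{p}-\modp{bj}{p}-\modp{cj}{p}$ together with the observation that both sides are divisible by $g_\infty$ is a cleaner packaging of what the paper does via $\deg\omega$, and it makes the endpoint bounds in cases (b) and (c) more transparent than in the original.
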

\begin{proof} 
As in the proof of Lemma \ref{L:weight-j-4-branched}, we begin by computing 
\begin{align*}
(y) &=\frac{a}{\gcd(a,p)}[0]+\frac{b}{\gcd(b,p)}[1]+\frac{c}{\gcd(c,p)}[\lambda]-\left(\frac{a}{\gcd(a,p)}+\frac{b}{\gcd(b,p)}+\frac{c}{\gcd(c,p)}\right)[\infty], \\
(dx) &=\frac{p-\gcd(a,p)}{\gcd(a,p)} [0]+\frac{p-\gcd(b,p)}{\gcd(b,p)}[1]+\frac{p-\gcd(c,p)}{\gcd(c,p)} [0]-\left(\frac{p}{\gcd(d,p)}+1\right)[\infty], \\
(x) &=\frac{p}{\gcd(a,p)}[0]-\frac{p}{\gcd(a,p)}[\infty], \\
(x-1) &=\frac{p}{\gcd(b,p)}[1]-\frac{p}{\gcd(b,p)}[\infty], \\
(x-\lambda)&=\frac{p}{\gcd(c,p)}[\lambda]-\frac{p}{\gcd(c,p)}[\infty].
\end{align*}

\noindent
Evidently, $\omega=y^jdx/x^{k+1}(x-1)^{\ell+1}(x-\lambda)^{m+1}$ is a form of weight $j$ which is 
regular on $\PP^1\smallsetminus\infty$
and has the least possible orders of vanishing along $[0]$, $[1]$, and $[\lambda]$. 
Namely, we have $\gcd(a,p)\ord_{0}(\omega)=\modp{aj-\gcd(a,p)}{p}, $
$\gcd(b,p)\ord_{1}(\omega)=\modp{bj-\gcd(b,p)}{p}, $ and 
$\gcd(c,p)\ord_{\lambda}(\omega)=\modp{cj-\gcd(c,p)}{p}$.
Note that $\deg \omega=2p-\gcd(a,p)-\gcd(b,p)-\gcd(c,p)-\gcd(d,p)$.

If $\modp{aj}{p}\modp{bj}{p}\modp{cj}{p}=0$, then 
$\ord_{\infty}(\omega)<0$. It follows that $\HH^0(C,\omega_C)_j=(0)$.
 
If $\modp{aj}{p}+\modp{bj}{p}+\modp{cj}{p}\geq 2p$, then 
$\modp{aj-\gcd(a,p)}{p}+\modp{bj-\gcd(b,p)}{p}+\modp{cj-\gcd(c,p)}{p}\geq 2p-\gcd(a,p)-\gcd(b,p)-\gcd(c,p)$. 
Thus $\ord_{\infty}(\omega)<0$ and so $\HH^0(C,\omega_C)_j=(0)$.

If $\modp{aj}{p}\modp{bj}{p}\modp{cj}{p}>0$ and $p\leq \modp{aj}{p}+\modp{bj}{p}+\modp{cj}{p}\leq 2p-1$, 
then $0\leq \gcd(d,p)\ord_{\infty}(\omega)\leq p-1$. 
It follows that $\omega$ is a unique (up to scaling) regular form of 
weight $j$.

Finally, if $\modp{aj}{p}\modp{bj}{p}\modp{cj}{p}>0$ and 
$0\leq \modp{aj}{p}+\modp{bj}{p}+\modp{cj}{p}\leq p-1$, then we have that 
$p\leq \gcd(d,p)\ord_{\infty}(\omega)\leq 2p-1$.
It follows that any other regular form of weight $j$ looks like $g(x)\omega$, where $g(x)$ is 
a rational function with at worst a single pole at $\infty$ and no other poles. The statement follows.
\end{proof}

\subsection{Universal $\mu_p$-cover over an F-curve}
We briefly recall the construction of the family of stable $\mu_p$-covers 
over $\M_{0,4}$ completing the family of smooth $\mu_p$-covers given by
\begin{align}\label{family-appendix}
C_{\lambda}: \quad y^{p}=x^{a}(x-1)^{b}(x-\lambda)^{c}, \quad \lambda\in \PP^1\smallsetminus\{0,1,\infty\}.
\end{align}
The construction parallels the global construction outlined in Section \ref{S:universal-p-cover}.
 \begin{construction}\label{construction}
Begin with a trivial family $\X:=\PP^1_x\times \PP^1_\lambda$ over $\PP^1_{\lambda}$ with $4$ sections \\
 $\Sigma_0: \{x=0\}$, $\Sigma_1:\{x=1\}$, $\Sigma_\infty: \{x=\infty\}$, and $\Sigma_\lambda:\{x=\lambda\}$. 
 Now perform the following steps:
  \begin{enumerate}
 \item[(1)] Blow up $3$ points where sections intersect; set $\X'=\mathrm{Bl\, } \X$.
 \item[(2)] Make a base change $B\ra \PP^1_\lambda$ of degree $p$ totally ramified over $\lambda=0, 1, \infty$. \\
 Set $\Y:=\X'\times_{\PP^1_\lambda} B$ and $f\co \Y\ra \X$.
 \item[(3)] Take the degree $p$ branched cover of $\Y$ ramified 
 over $f^{*}(a\Sigma_0+b\Sigma_1+c\Sigma_\lambda+d\Sigma_\infty)$.
 \item[(4)] Normalize the total space to obtain a family of stable curves $\Z \ra B$. 
 \end{enumerate}
Let $g\co \Z \ra \X$ be the resulting morphism.
The strict transforms on $\Z$ of the fibers of $\X$ over $\lambda=0,1,\infty$ are denoted $F_0$,
$F_1$, and $F_{\infty}$, respectively. The exceptional divisors of $g$ lying over 
$\lambda=0,1,\infty$ are denoted by $E_0$, $E_1$, and $E_{\infty}$. Note that $F_0=C(a+c,b)$ and
$E_0=C(a,c)$, etc.

 We call the family of stable curves obtained in Construction \ref{construction} 
 the {\em universal $\mu_p$-cover 
 of type $(a,b,c,d)$ over $\M_{0,4}$}. It is precisely the moving component of the pullback, by
 the cyclic covering morphism $f_{n,p}\co \M_{0,n}\ra \Mg{g}$, 
 of the universal family over $\Mg{g}$ to an F-curve of type $(a,b,c,d)$.
  \end{construction}

\begin{prop}\label{P:determinants}
Let $\EE$ be the Hodge bundle of the universal cyclic $\mu_p$-cover of type $(a,b,c,d)$ over $\M_{0,4}$. 
Then:
\begin{enumerate}
\item[(1)] The eigenbundle $\EE_j$ has rank $0$ for all $j\in\{0,1,\dots,p-1\}$ such that 
 \begin{align*}
 \modp{aj}{p}+\modp{bj}{p}+\modp{cj}{p}+\modp{dj}{p}=3p.
 \end{align*}
\item[(2)] The eigenbundle $\EE_j$ has rank $2$ and $\deg(\EE_j)=0$ for all $j\in\{0,1,\dots,p-1\}$  such that 
 \begin{align*}
 \modp{aj}{p}+\modp{bj}{p}+\modp{cj}{p}+\modp{dj}{p}=p.
 \end{align*}
\item[(3)] The eigenbundle $\EE_j$ has rank $1$ for all $j\in\{0,1,\dots,p-1\}$ such that
 \begin{align*}
 \modp{aj}{p}+\modp{bj}{p}+\modp{cj}{p}+\modp{dj}{p}=2p.
 \end{align*}
 In this case, we have
\begin{align*}
\deg(\EE_j)=\frac{1}{p}\min\{\modp{aj}{p},\modp{bj}{p},\modp{cj}{p},\modp{dj}{p}, 
\modp{-aj}{p},\modp{-bj}{p},\modp{-cj}{p},\modp{-dj}{p}\}.
\end{align*}
 \end{enumerate}
\end{prop}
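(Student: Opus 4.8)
The plan is to compute, for the family $\Z\ra B$ of Construction \ref{construction}, the dimension and degree of each eigenbundle $\EE_j$ directly from the weight-space computations of Lemmas \ref{L:weight-j-3-branched} and \ref{L:weight-j-4-branched}. First I would observe that $\EE_j$ is, by definition, the pushforward $\pi_* \omega_{\Z/B}$ restricted to the $j$-th $\mu_p$-eigenspace, and that over the generic point $\lambda\in\PP^1\smallsetminus\{0,1,\infty\}$ its fiber is $\HH^0(C_\lambda,\omega_{C_\lambda})_j$, which Lemma \ref{L:weight-j-4-branched} identifies completely. Under the hypothesis $\modp{aj}{p}+\modp{bj}{p}+\modp{cj}{p}+\modp{dj}{p}=3p$, the ``$\modp{\cdot}{p}$'' version of the sum $\modp{aj}{p}+\modp{bj}{p}+\modp{cj}{p}\ge 2p$ holds for the surviving three-branch-point data after absorbing the fourth, so $\HH^0(C_\lambda,\omega_{C_\lambda})_j=(0)$; since $\EE_j$ is a subbundle of a bundle whose generic fiber is zero, $\EE_j$ has rank $0$. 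This gives (1). Similarly, in case (2) the sum equals $p$, Lemma \ref{L:weight-j-4-branched}(c) gives a two-dimensional generic weight space, so $\EE_j$ has rank $2$, and in case (3) the sum equals $2p$, Lemma \ref{L:weight-j-4-branched}(b) gives a one-dimensional weight space, so $\EE_j$ has rank $1$.

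Having pinned down the ranks, the remaining content is the two degree computations. For the vanishing of $\deg \EE_j$ in case (2), I would argue via the degenerate fibers $F_0, F_1, F_\infty$ and the exceptional divisors $E_0, E_1, E_\infty$: on each such fiber the stable $\mu_p$-cover breaks into two three-branch-point covers $C(a+c,b)$ and $C(a,c)$ (and symmetric variants), whose weight-$j$ spaces are governed by Lemma \ref{L:weight-j-3-branched}. The idea is that a rank-$2$ eigenbundle of degree $0$ over $\PP^1$ is trivial, and one checks directly that $\EE_j$ restricts trivially to each boundary fiber: the weight-$j$ forms on the nodal curve are obtained by gluing weight-$j$ forms on the components, and counting with Lemma \ref{L:weight-j-3-branched} shows the relevant ranks add up to $2$ with no room for a jump, forcing $\deg\EE_j=0$. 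Alternatively, and perhaps more cleanly, I would invoke the already-established pullback formula \eqref{E:weighted-pullback} together with the fact that $\det\EE_j=0$ follows from Proposition \ref{P:determinants-prelim} once one knows $\sum_j j$-summands reconstruct $\lambda$ — but the cleanest self-contained route is the fiberwise triviality argument.

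For case (3), the degree formula
\[
\deg\EE_j=\tfrac{1}{p}\min\{\modp{aj}{p},\modp{bj}{p},\modp{cj}{p},\modp{dj}{p},\modp{-aj}{p},\modp{-bj}{p},\modp{-cj}{p},\modp{-dj}{p}\}
\]
requires more care. Here $\EE_j$ is a line bundle on $\PP^1$, so its degree is determined by the order of vanishing of the generating section $\omega$ along the three boundary fibers. The plan is: near $\lambda=0$, the section $\omega = y^j dx/x^{k+1}(x-1)^{\ell+1}(x-\lambda)^{m+1}$ degenerates, and on the nodal fiber $F_0 \cup E_0$ one must track which component of the normalized curve carries the limiting weight-$j$ form. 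The relevant three-branch-point covers are $F_0 = C(a+c,b)$ and $E_0 = C(a,c)$; Lemma \ref{L:weight-j-3-branched} tells us exactly on which of the two the weight-$j$ form lives (it is nonzero on $F_0$ iff $\modp{(a+c)j}{p}\modp{bj}{p}>0$ and their sum is $\le p-1$, and nonzero on $E_0$ under the analogous condition on $\modp{aj}{p},\modp{cj}{p}$). Computing the local order of $\omega$ at each of $\lambda=0,1,\infty$ and summing the three contributions should collapse — after a residue-modulo-$p$ bookkeeping identity — into the single minimum over the eight residues $\modp{\pm aj}{p},\dots,\modp{\pm dj}{p}$. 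The main obstacle will be precisely this last bookkeeping step: organizing the three local exponent computations (which involve floor functions $\lfloor aj/p\rfloor$ etc.\ and the ramification orders $r_i=p/\gcd(d_i,p)$) so that they telescope into the clean symmetric minimum, and handling the case distinctions of Lemma \ref{L:weight-j-3-branched}(a)--(c) uniformly. I would expect to set $d=\modp{p-a-b-c}{p}$ throughout, use the symmetry $C(a,b,c)=C(a,c,d)=\cdots$ to reduce the number of cases, and reconcile the final expression with the already-proven formula \eqref{E:weighted-pullback} for $f^*_{\vec d,p}(\lambda)$ as a consistency check on the combinatorics.
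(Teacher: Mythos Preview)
Your rank computations in (1)--(3) via Lemma \ref{L:weight-j-4-branched} are correct and match the paper exactly. The gap is in your degree arguments.

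For part (2), the ``no jump'' argument is not valid as written. The pushforward $\pi_*\omega_{\Z/B}$ is always locally free (cohomology and base change, since $h^0(\omega_{C_\lambda})=g$ is constant), and hence so is each direct summand $\EE_j$. Thus the fiber dimension of $\EE_j$ is automatically constant across $B$ --- there is no possibility of a ``jump'' in the first place, and checking that the weight-$j$ space on each nodal fiber has dimension $2$ gives you no information about $\deg\EE_j$. (For instance, $\O(1)\oplus\O(1)$ on $\PP^1$ has constant fiber dimension $2$ and degree $2$.) Your alternative route through Proposition \ref{P:determinants-prelim} and the formula for $\lambda$ is circular, since the comparison with conformal blocks in Theorem \ref{T:theorem-A} relies on the present proposition. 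What the paper does instead is extend $\omega_0\wedge\omega_1=x\omega\wedge(x-1)\omega$ to a \emph{rational} section of $\det\EE_j$ and compute its vanishing order at each of $\lambda=0,1,\infty$ by explicit coordinate substitutions (e.g.\ $x=\overline{x}\lambda$, $\lambda=\eta^p$ near $\lambda=0$); the three local orders sum to zero.

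For part (3) your outline is on the right track but misidentifies the mechanism. The paper does not sum three local contributions and watch them ``telescope'' into the minimum. Instead it uses the symmetry of the problem to assume without loss of generality that $0<\modp{aj}{p}\le\modp{bj}{p}\le\modp{dj}{p}\le\modp{cj}{p}$ (and, say, $\modp{aj}{p}+\modp{cj}{p}\ge p$). Under this ordering one checks via Lemma \ref{L:weight-j-3-branched} that $\omega$ restricts to a \emph{generator} of the weight-$j$ space on the nodal fibers at $\lambda=0$ and $\lambda=1$, so those two points contribute $0$. Only the fiber at $\lambda=\infty$ contributes, and there an explicit coordinate change shows $\omega$ vanishes to order $p(m+1)-cj=p-\modp{cj}{p}$, which is precisely the minimum (after dividing by $p$ for the base change $B\to\PP^1_\lambda$). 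The WLOG ordering is what collapses the answer to a single term; without it you would indeed face messy case distinctions.
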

\begin{proof} If $\modp{aj}{p}\modp{bj}{p}\modp{cj}{p}\modp{dj}{p}=0$, the statement follows immediately 
from Lemma \ref{L:weight-j-4-branched}. From now on, we assume 
that $\modp{aj}{p}\modp{bj}{p}\modp{cj}{p}\modp{dj}{p}>0$.

We consider the unique positive integers $k,\ell, m$ satisfying
\begin{align*}
aj-\gcd(a,p) &=kp+\modp{aj-\gcd(a,p)}{p}, \\
bj-\gcd(b,p) &=\ell p+\modp{bj-\gcd(a,p)}{p}, \\
cj-\gcd(c,p) &=mp+\modp{cj-\gcd(a,p)}{p}.\\
\end{align*} 
Set $f(x):=x^{k+1}(x-1)^{\ell+1}(x-\lambda)^{m+1}$ and $\omega:=y^jdx/f(x)$.

\begin{proof}[Proof of (1)] 
By Lemma \ref{L:weight-j-4-branched}, the fiber of $\EE_j$ at a point of $\PP^1\smallsetminus \{0,1,\infty\}$ is 
empty. The statement follows.
\renewcommand{\qedsymbol}{}
\end{proof}

\begin{proof}[Proof of (2)] 
By Lemma \ref{L:weight-j-4-branched}, the fiber of $\EE_j$ at a point of $\PP^1\smallsetminus \{0,1,\infty\}$ is 
spanned by $\omega_0:=x\omega$ and $\omega_1:=(x-1)\omega$. 
We extend $\omega_0\wedge \omega_1$ to a global rational section of $\det \EE_j$ 
and compute its zeros and poles. 

At $\lambda=0$, we have that $\omega_0=x\omega$ is
a regular form of weight $j$ on $F_0$ by Lemma \ref{L:weight-j-4-branched}.
We now compute the extension of $\omega_0-\omega_1=\omega$ to $E_0$. 
Set $x=\overline{x}\lambda$ and $\lambda=\eta^p$. Then $\overline{x}$ and $\eta$
are local 
coordinates near the generic point of $E_0$. 
The local equation of the branched cover in Step (3) of Construction \ref{construction} is
\begin{multline*}
y^p=x^a(x-1)^b(x-\lambda)^c 
=\lambda^{a+c}\overline{x}^a (\overline{x}\lambda-1)^b(\overline{x}-1)^c=\eta^{p(a+c)}\overline{x}^a (\overline{x}-1)^c(\overline{x}\lambda-1)^b.
\end{multline*}
It follows that after the normalization in Step (4) of Construction \ref{construction}, $E_0$ has equation 
$$z^p=\overline{x}^a(\overline{x}-1)^c(\overline{x}\lambda-1)^b,$$
where $z=y/\eta^{a+c}$. It follows that, modulo $d\eta$,
\begin{multline*}
\omega=y^jdx/f(x) 
=\eta^{p+j(a+c)}z^{j}d\overline{x}/\bigl(\eta^{p(k+m+2)} \overline{x}^{k+1}(\lambda \overline{x}-1)^{\ell+1}(\overline{x}-1)^{m+1}\bigr)\\=\eta^{j(a+c)-p(k+m)-p}\omega',
\end{multline*}
where $\omega'$ restricts to a generator of $\HH^0(E_0,\omega_{E_0})_j$ 
by Lemma \ref{L:weight-j-3-branched}. 
We conclude that $\omega_0\wedge \omega_1$ vanishes to order $j(a+c)-p(k+m+1)$ at $\lambda=0$.
Similarly, $\omega_0\wedge \omega_1$ vanishes to order 
$j(b+c)-p(\ell+m+1)$ at $\lambda=1$. 

Finally, we compute the vanishing order of $\omega_0\wedge \omega_1$ at $\lambda=\infty$. 
In terms of the local coordinate $\eta$ in the neighborhood of $\infty$ on $B$ we have $\lambda=1/\eta^p$. 
The equation of the branched cover in Step (3) of Construction \ref{construction} becomes
$
(y\eta^c)^p=x^a(x-1)^b(\eta^px-1)^c.
$
After the normalization in Step (4) of Construction \ref{construction}, 
the equation of $\Z$ in the neighborhood of $F_\infty$ is $z^p=x^a(x-1)^b$, where $z=y \eta^c$.
We compute that 
\begin{align*}
\omega_0-\omega_1=\omega 
=\eta^{p+pm-cj} z^jdx/x^{k+1}(x-1)^{\ell+1}(\eta^px-1)^{m+1}=\eta^{p(m+1)-cj}\omega',
\end{align*}
where $\omega'$ restricts to a generator of $\HH^0(F_\infty, \omega_{F_\infty})_j$
by Lemma \ref{L:weight-j-3-branched}.

In the neighborhood of $E_\infty$, we choose local coordinates $(\eta, u)$ such that
$\lambda=1/\eta^p$ and $x=u/\eta^p$.
The local equation of the branched cover in Step (3) of Construction \ref{construction} becomes
$
y^p 
=\frac{1}{\eta^{p(a+b+c)}}(u-\eta^p)^b (u-1)^c u^a.
$

It follows that after the normalization in Step (4) of Construction \ref{construction},
the equation of $\Z$ in the neighborhood of $E_\infty$ is
$z^p= u^a(u-\eta^p)^b(u-1)^c ,$
where $z=y\eta^{a+b+c}$.
 It follows that, modulo $d\eta$,
\begin{align*}
\omega_0=x\omega 
=\eta^{p-j(a+b+c)+kp+mp+\ell p}z^jdu/\bigl(u^{k}(u-\eta^p)^{m+1}(u-1)^{\ell+1}\bigr).
\end{align*}
Summarizing, $\omega_0\wedge \omega_1$ vanishes to order 
$p(m+1)-cj+p-j(a+b+c)+kp+mp+\ell p=(2m+2)p+kp+\ell p-j(a+b+2c)$ at $\infty$. We conclude that 
$$\deg(\EE_j)=j(a+c)-p(k+m+1)+j(b+c)-p(\ell+m+1)+(2m+2)p+kp+\ell p-j(a+b+2c)=0.$$

\renewcommand{\qedsymbol}{}
\end{proof}

\begin{proof}[Proof of (3)] 
By Lemma \ref{L:weight-j-4-branched}, the fiber of $\EE_j$ at a point of $\PP^1\smallsetminus \{0,1,\infty\}$ is 
one-dimensional. 
Thus $\EE_j$ is a line bundle. We compute $c_1(\EE_j)$ by writing down a rational section of 
$\EE_j$ and counting its zeros and poles. 
We begin with the global rational $1$-form $\omega=y^jdx/f(x)$
that restricts to a generator of $\HH^0(C_\lambda, \omega_{\C_\lambda})_j$ for all $\lambda\in \PP^1\smallsetminus \{0,1,\infty\}$. 
Suppose, without loss of generality, that $0<\modp{aj}{p} \leq \modp{bj}{p} \leq \modp{dj}{p} \leq \modp{cj}{p}$.
We will treat only the case $\modp{cj}{p}+\modp{aj}{p}\geq p$. Then 
$p-\modp{cj}{p}=\min\{\modp{aj}{p},\modp{bj}{p},\modp{cj}{p},\modp{dj}{p}, 
\modp{-aj}{p},\modp{-bj}{p},\modp{-cj}{p},\modp{-dj}{p}\}$.

The key observation is that under our assumptions, the global $1$-form $\omega$ restricts
to the generator of $\HH^0(C_\lambda,\omega_{C_\lambda})_j$ for every $\lambda\neq \infty$. 
Indeed, the assumption $\modp{cj}{p}+\modp{aj}{p}\geq p$ implies that at $\lambda=0$, 
$\modp{(a+c)j}{p}+\modp{bj}{p}+\modp{dj}{p}=\modp{aj}{p}+\modp{cj}{p}+\modp{bj}{p}+\modp{dj}{p}-p=p$ and so $\omega$ restricts to a regular form 
on $F_0$ by Lemma \ref{L:weight-j-3-branched}. Similarly, $\omega$ restricts to a regular form on 
$F_1$. 

By Lemma \ref{L:weight-j-3-branched}, $\omega$ will restrict to a multiple of the generator
of $\HH^0(F_\infty, \omega_{F_\infty})_j$. 
It remains to compute the order of vanishing of $\omega$ at $\infty$. 
In terms of the local coordinate $\eta$ in the neighborhood of $\infty$ on $B$, we have $\lambda=1/\eta^p$. 
The equation of the branched cover in Step (3) of Construction \ref{construction} becomes 
$
(y\eta^c)^p=x^a(x-1)^b(\eta^px-1)^c.
$
After the normalization in Step (4) of Construction \ref{construction}, 
the equation of $\Z$ in the neighborhood of $F_\infty$ is $z^p=x^a(x-1)^b$, where $z=y \eta^c$.
We compute that 
\begin{align*}
\omega 
=\eta^{p+pm-cj} z^jdx/x^{k+1}(x-1)^{\ell+1}(\eta^px-1)^{m+1}=\eta^{p(m+1)-cj}\omega',
\end{align*}
where $\omega'$ restricts to a generator of $\HH^0(F_\infty, \omega_{F_\infty})_j$
by Lemma \ref{L:weight-j-3-branched}. It follows that 
$\omega$ vanishes to the order $pm+p-cj=p-\modp{cj}{p}$. 

Taking into the account the factor $1/p$ arising from the degree $p$ base change $B\ra \PP^1$
in Step (2) of Construction \ref{construction}, we conclude the proof.
\end{proof}
\renewcommand{\qedsymbol}{}
\end{proof}
\vspace{-2pc}
\begin{remark} Proposition \ref{P:determinants} is also proved in \cite[Theorem 1]{eskin-kontsevich-zorich}.
In particular, the computation similar to that of Proposition \ref{P:determinants} Part (3) appears in \cite[Section 2.4]{eskin-kontsevich-zorich}.
\end{remark}     

\subsection*{Acknowledgements} We would like to thank Aise Johan de Jong and Ian Morrison for helpful conversations. 

\bibliography{cyclic_references}
\bibliographystyle{amsalpha}
\end{document}